\documentclass[reqno,oneside,11pt]{amsart}

\usepackage[a4paper,text={14.5cm,23cm},centering,headheight=15.5pt]{geometry}

\usepackage[utf8]{inputenc}
\usepackage[T1]{fontenc}
\usepackage[svgnames,x11names]{xcolor}
\usepackage{lmodern}
\usepackage{microtype} %better text justification
\usepackage{amssymb} %\setminus,leqslant... 
\usepackage{amsmath}
\usepackage{amsthm}
\usepackage{mathtools} % multlined environment

\usepackage{booktabs, tabularx}
\AtBeginEnvironment{tabular}{
\newcolumntype{L}{>{$}l<{$}}  
\newcolumntype{R}{>{$}r<{$}}
\newcolumntype{C}{>{$}c<{$}}
}

\usepackage[shortlabels]{enumitem}
\setlist{nosep, itemsep=1pt}
\setlist[1]{wide}
\setlist[2]{leftmargin=15mm}
\setlist[enumerate]{label=\rm{(\arabic*)}}
\setlist[enumerate,2]{label=\rm({\roman*}), }
\setlist[itemize]{label=\raisebox{0.25ex}{\tiny$\bullet$}}

\usepackage{tikz}
\usetikzlibrary{cd,
                arrows,
                positioning,
                calc,
                decorations.markings,
                backgrounds,
                shapes.geometric,
                math,
                intersections}
\tikzset{>=stealth}
\tikzset{commutative diagrams/row sep/normal=0.35cm}
\tikzset{map/.style={row sep=0em, column sep=0em}}
\tikzset{bullet/.style={circle,fill=black,minimum size=5pt,inner sep=0pt}}
\tikzset{graph/.style={circle,thick,draw,inner sep=0mm,minimum size=7mm,fill=gray!20}}

\tikzcdset{arrow style=tikz}

\DeclareFontFamily{U}{mathx}{\hyphenchar\font45}
\DeclareFontShape{U}{mathx}{m}{n}{<-> mathx10}{}
\DeclareSymbolFont{mathx}{U}{mathx}{m}{n}

\DeclareMathAccent{\widebar}{0}{mathx}{"73}

\renewcommand{\to}{ \, \tikz[baseline=-.6ex] \draw[->,line width=.5] (0,0) -- +(.5,0); \, }

\renewcommand{\mapsto}{ \, \tikz[baseline=-.6ex] \draw[|->,line width=.5] (0,0) -- +(.5,0); \, }

\newcommand{\A}{\mathbb{A}}
\newcommand{\F}{\mathbb{F}}

\let \P \relax
\newcommand{\P}{\mathbb{P}}

\newcommand{\Z}{\mathbf{Z}}
\newcommand{\N}{\mathbf{N}}

\newcommand{\R}{\mathbf{R}}

\newcommand{\Cl}{\mathcal C} 
\newcommand{\Dl}{\mathcal D}

\newcommand{\Gl}{\mathcal G}

\newcommand{\Ll}{\mathcal L}

\newcommand{\Tl}{\mathcal T} 
\newcommand{\Vl}{\mathcal V}

\DeclareMathOperator{\GL}{GL}
\DeclareMathOperator{\SL}{SL}
\DeclareMathOperator{\SO}{SO}

\renewcommand{\setminus}{\smallsetminus}

\renewcommand{\phi}{\varphi}
\renewcommand{\le}{\leqslant}
\renewcommand{\ge}{\geqslant}
\renewcommand{\bar}{\widebar}
\newcommand{\id}{\textup{id}}

\newcommand{\Ap}{\mathbf{E}}

\DeclareMathOperator{\Aut}{Aut}
\DeclareMathOperator{\SAut}{SAut}

\DeclareMathOperator{\End}{End}
\DeclareMathOperator{\Tame}{Tame}
\DeclareMathOperator{\STame}{STame}
\DeclareMathOperator{\Fix}{Fix}

\DeclareMathOperator{\Supp}{Supp}

\DeclareMathOperator{\car}{char}
\DeclareMathOperator{\CAT}{CAT}

\DeclareMathOperator{\Isom}{Isom}

\DeclareMathOperator{\Stab}{Stab}
\DeclareMathOperator{\inn}{in}
\DeclareMathOperator{\out}{out}
\newcommand{\trl}{L} % translation length

\newcommand{\bord}{\partial}

\newcommand{\mat}[1]{
\begin{pmatrix}#1\end{pmatrix}
}
\newcommand{\smallmat}[1]{
\left(\begin{smallmatrix}#1\end{smallmatrix}\right)
}
\newcommand{\smallpmat}[1]{
\left[\begin{smallmatrix} #1 \end{smallmatrix}\right]
}

\newcommand{\abs}[1]{\lvert #1 \rvert} %absolute value
\newcommand{\lb}{\langle}
\newcommand{\rb}{\rangle}
\newcommand{\lbb}{\mathopen{\lb\!\lb}}
\newcommand{\rbb}{\mathclose{\rb\!\rb}}
\newcommand{\normal}[2]{\lbb #1 \rbb_{#2}} % normal subgroup generated by f in G
\newcommand{\parent}[1]{\textup{(}#1\textup{)}}

\newcommand{\pcoor}[1]{%
  \begingroup\lccode`~=`: \lowercase{\endgroup
  \edef~}{\mathbin{\mathchar\the\mathcode`:}\nobreak}%
  \left[% opening symbol
  \begingroup
  \mathcode`:=\string"8000
  #1%
  \endgroup 
  \right]% closing symbol
} % projective coordinates with correct spacing: all : are treated as \mathbin{:}

\makeatletter
\def\@cite#1#2{\textup{[{#1\if@tempswa , #2\fi}]}}
\makeatother

\usepackage[%
  pdfauthor={S. Lamy},%
  colorlinks=true,%
  allcolors=Navy%
]{hyperref}

\usepackage[all]{hypcap} % to help hyperlinks direct correctly, especially for figures
\usepackage[depth=2, open, openlevel=0, numbered]{bookmark} % pour le pdf
\usepackage[capitalize,noabbrev]{cleveref}
\newcommand{\parref}[1]{\textup{\S\,}\ref{#1}}
\usepackage{upref} % for automatic \textup{\ref{...}}
\crefdefaultlabelformat{#2\textup{#1}#3}% the same for \cref

\theoremstyle{plain}
\newtheorem{theorem}{Theorem}[section]
\newtheorem{corollary}[theorem]{Corollary}
\newtheorem{proposition}[theorem]{Proposition}
\newtheorem{lemma}[theorem]{Lemma}

\theoremstyle{definition}
\newtheorem{question}{Question}

\newtheorem{example}[theorem]{Example}
\newtheorem{remark}[theorem]{Remark}

\numberwithin{equation}{section}

\NewCommandCopy\polhk\k %polish hook
\renewcommand{\k}{\mathbf{k}}
\let \O \relax
\DeclareMathOperator{\O}{O}

\title{Tame polynomial automorphisms}
\author{Stéphane Lamy}
\email{slamy@math.univ-toulouse.fr}
\address{Stéphane Lamy, Institut de Mathématiques de Toulouse,
Université de Toulouse,
118 route de Narbonne,
F-31062 Toulouse Cedex 9}
\thanks{This survey was written during a one-year stay of the author at the University of Neuchâtel, supported by the Labex CIMI}

\begin{document}

\begin{abstract}
The group $\Aut(\A^n)$ of polynomial automorphisms of the affine space is an interesting large group.
A slightly simpler group is its subgroup $\Tame(\A^n)$ of tame automorphisms.
Natural problems about these groups include the existence of normal subgroups, the classification of finite subgroups, the Tits alternative, and the possible dynamical degrees of their elements.
One method to investigate these questions is via some actions on some metric spaces, namely the coset complex and the valuation complex, that we introduce in detail.
This paper is a survey focusing on the following three cases:
the group $\Aut(\A^2) = \Tame(\A^2)$, the group $\Tame(\A^3)$, and the group $\Tame_q(\A^4)$ of tame automorphisms of $\A^4$ preserving a nondegenerate quadratic form.
\end{abstract}

\maketitle

\section{Introduction}

Let $\k$ be any field, $n \ge 1$, and $\A^n$ the $n$-dimensional affine space over $\k$.
The group $\Aut(\A^n)$ of polynomial automorphisms consists of morphisms of the form
\[
\begin{tikzcd}[map]
f\colon & \A^n & \to & \A^n \\
& (x_1, \dots, x_n) & \mapsto & (f_1, \dots, f_n),
\end{tikzcd}
\]
where $f_i \in \k[x_1, \dots, x_n]$, and admitting an inverse of the same form.
We denote such a polynomial automorphism simply by $f = (f_1, \dots, f_n)$.

Equivalently, one can think of a polynomial automorphism $f = (f_1, \dots, f_n)$ as a $\k$-automorphism of the algebra $\k[x_1, \dots, x_n]$, by sending each variable $x_i$ to $f_i$.
Beware however that the two groups $\Aut(\A^n)$ and $\Aut_\k(\k[x_1, \dots, x_n])$ are anti-isomorphic: the order of composition is reversed.
In this text we stick to the ``dynamical'' $\Aut(\A^n)$ point of view.

\begin{example}
\begin{enumerate}
\item The map $f = (x_1, x_1x_2)$ admits $(x_1, \tfrac{x_2}{x_1})$ as a rational inverse. So $f$ is a polynomial morphism from $\A^2$ to $\A^2$, but is not a polynomial automorphism.
\item
Let $\k = \R$ be the field of real numbers.
The map $x \mapsto x^3$ is polynomial, bijective from $\R$ to $\R$, but is not a polynomial automorphism of $\A^1$ since it does not admit a polynomial inverse.
\item
Let $\k = \F_q$ be a finite field.
Then a polynomial automorphism $f \in \Aut(\A^n)$ is not characterized by the bijection it induces on $\k^n$.
For instance over the field $\F_2$ with two elements, the polynomial automorphism $f = (x_1 + x_2(x_2 + 1), x_2)$ is distinct from the identity automorphism, but acts as the identity on the four points of $(\F_2)^2$.
\end{enumerate}
\end{example}

The group $\Aut(\A^n)$ contains the subgroup $\GL_n(\k)$ of linear automorphisms, and the group
\[
E_n = \left\{ (x_1 + P(x_2, \dots, x_n), x_2, \dots, x_n) \mid P \in
\k[x_2, \dots, x_n] \right\}
\]
of elementary automorphisms.
We define the tame automorphism group as the subgroup generated by linear and elementary automorphisms:
\[
\Tame(\A^n) = \langle \GL_n(\k), E_n \rangle \subset \Aut(\A^n).
\]
In particular $\Tame(\A^n)$ contains all translations, and so also the affine group
\[
A_n = \GL_n(\k) \ltimes \k^n.
\]
Conjugating elementary automorphisms by permutation matrices, we can also obtain the triangular group
\[
B_n = \{ (f_1, \dots, f_n) \mid f_i = a_i x_i + P_i(x_{i+1}, \dots, x_n) \text{ for each } i = 1, \dots, n\}.
\]
One can equivalently define the tame group as $\Tame(\A^n) = \langle A_n, B_n \rangle$.
When $n = 1$, we have
\[
\Aut(\A^1) = \Tame(\A^1) = \{ a x_1 + b \mid a \in \k^*, b\in \k \}.
\]
So this case is mostly trivial, and in this text we focus on the case $n \ge 2$.

In dimension $n = 2$, it turns out that all polynomial automorphisms are tame.
This is known as Jung's theorem, see \cite{Jung} or \cite[Chapter 7]{Lamy_Book}.
More precisely, $\Aut(\A^2)$ is the amalgamated product of the affine group $A_2$ and the triangular group $B_2$ along their intersection:

\begin{theorem}[Jung]
\label{t:jung}
Let $\k$ be any field. Then
\[
\Aut(\A^2) = \Tame(\A^2) = A_2 *_{A_2 \cap B_2} B_2.
\]
\end{theorem}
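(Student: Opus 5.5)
The proof has two independent halves: tameness, $\Aut(\A^2) = \Tame(\A^2)$, by a degree-reduction argument, and the amalgam structure, by a normal form plus a degree computation.

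\textbf{Tameness.} Let $f = (f_1, f_2) \in \Aut(\A^2)$ and write $\deg f = \max(\deg f_1, \deg f_2)$. We argue by induction on $\deg f$; if $\deg f = 1$ then $f \in A_2$. The key lemma is the following: if $\deg f \ge 2$, then the top homogeneous components $\bar f_1$ and $\bar f_2$ are algebraically dependent. The plan is to prove this via the Jacobian. Since $\det \mathrm{Jac}(f)$ is a nonzero constant, the top-degree part of the Jacobian determinant, namely $\det \mathrm{Jac}(\bar f_1, \bar f_2)$, must vanish when $\deg f_1 + \deg f_2 > 2$. In characteristic zero this forces $\bar f_1^{d_2}$ and $\bar f_2^{d_1}$ to be proportional. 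In positive characteristic that argument breaks down, so there I would use the valuation or degree argument instead: if $\bar f_1, \bar f_2$ were independent, then any nonconstant polynomial $P(f_1, f_2)$ would have degree at least $\min(d_1, d_2) \ge 2$, so $x_1 = P(f_1, f_2)$ would be impossible.

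With dependence in hand, two homogeneous polynomials in two variables that are algebraically dependent are powers of a common form up to scalars. Hence, say when $d_1 \ge d_2$, we get $d_2 \mid d_1$ and $\bar f_1 = c\,\bar f_2^{\,d_1/d_2}$. Composing with the elementary map $(x_1 - c x_2^{d_1/d_2}, x_2)$, and using a permutation in the other case, strictly lowers $\deg f_1$. Induction then finishes the proof. I expect this step to be the main obstacle, precisely because it must work over an arbitrary field.

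\textbf{Amalgam structure.} First, every element of $\Tame(\A^2)$ can be written as an alternating word $a_0 b_1 a_1 \cdots b_k a_k$ with $a_i \in A_2$, $b_i \in B_2$, where the inner letters lie outside $A_2 \cap B_2$. By the standard criterion for amalgamated products, it remains to show that such a reduced word with $k \ge 1$ is not the identity.

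The plan is to show by induction on $k$ that the degree is multiplicative:
\[
\deg(a_0 b_1 a_1 \cdots b_k a_k) = \prod_i \deg b_i \ge 2.
\]
Without loss of generality each $b_i$ is conjugated to the form $(x_1 + P(x_2), x_2)$ with $\deg P \ge 2$, and each inner $a_i$ is not triangular, so it mixes $x_2$ into the first coordinate. The key point is to track the leading homogeneous components at each step. The top-degree part is a power of a single linear form, and a non-triangular $a_i$ prevents the next $b_i$ from cancelling it. So each composition multiplies the degree by $\deg P_i$, and the word has degree $\ge 2$, hence is not the identity. Verifying this leading-term bookkeeping is routine once the normalization of the letters is set up.
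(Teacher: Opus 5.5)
Your amalgam half is the paper's argument: by \cref{l:degree}, a reduced word $a_0b_1a_1\cdots b_ka_k$ has degree $\prod\deg b_i\ge 2$, so it is not the identity. Your leading-term bookkeeping for this is correct over any field. The gap is in the tameness half, which the paper does not prove: it cites \cite{Jung} and \cite[Chapter 7]{Lamy_Book} and calls it the difficult part of the theorem.

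The failing step is ``Hence, say when $d_1\ge d_2$, we get $d_2\mid d_1$ and $\bar f_1=c\,\bar f_2^{\,d_1/d_2}$''. Algebraic dependence of the leading forms only gives, via unique factorization in $\k[x_1,x_2]$, that $\bar f_1=c_1h^a$ and $\bar f_2=c_2h^b$ for some form $h$ with $\gcd(a,b)=1$. Nothing in your argument excludes $a,b\ge 2$. For instance, $\bar f_1=x_1^3$ and $\bar f_2=x_1^2$ are powers of the common form $x_1$, yet $2\nmid 3$, and no elementary automorphism lowers the degree.

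Ruling this configuration out for an automorphism is exactly the Jung--van der Kulk divisibility lemma ($d_1\mid d_2$ or $d_2\mid d_1$), which is the real content of the theorem. Leading forms alone cannot give it. When they are dependent, expressions such as $f_1^2-f_2^3$ have unexpectedly low degree, so degree counting yields no lower bound on $\deg P(f_1,f_2)$. You need a genuinely new ingredient. In characteristic zero, the Shestakov--Umirbaev degree estimate uses that the Jacobian is constant to show $\deg P(f_1,f_2)\ge 2$ for every nonconstant $P$ when $a,b\ge 2$, contradicting $x_1\in\k[f_1,f_2]$. Over an arbitrary field, you need something like a Newton-polygon argument or an analysis of the base points at infinity of the extension to $\P^2$.

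There are two smaller slips.

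\begin{itemize}
\item In your positive-characteristic branch, $\min(d_1,d_2)\ge 2$ is false in general; take $(x_1+x_2^2,x_2)$. The argument can be repaired. If the leading forms are independent and $d_2=1<d_1$, the elements of degree $\le 1$ in $\k[f_1,f_2]$ are affine functions of $f_2$ alone, so they cannot include both $x_1$ and $x_2$. This weighted-degree argument works in every characteristic, so the Jacobian detour is unnecessary.
\item When $d_1=d_2$ the reducing map is linear and $\max(d_1,d_2)$ does not drop. Induct on $d_1+d_2$ rather than on $\deg f$.
\end{itemize}
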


When $n = 3$, Nagata conjectured in \cite{Nagata} that the following explicit polynomial automorphism is not tame:
\begin{equation}
\label{eq:nagata}
f = (x_1 + 2x_2 (x_2^2 - x_1x_3) + x_3 (x_2^2 - x_1x_3)^2,
x_2 + x_3(x_2^2 - x_1x_3),
x_3).
\end{equation}
Observe that $f$ preserves the quadratic form $q(x_1,x_2,x_3) = x_2^2 - x_1x_3$, in the sense that $q \circ f = q$.
Moreover, $f$ is part of an additive flow.
If we write for $t \in \k$
\[
f_t = (x_1 + 2tx_2 (x_2^2 - x_1x_3) + t^2 x_3 (x_2^2 - x_1x_3)^2,
x_2 + t x_3(x_2^2 - x_1x_3),
x_3),
\]
then $f_0=\id$ and $f_{t_2} \circ f_{t_1} = f_{t_1 + t_2}$ for any $t_1, t_2 \in \k$, and in particular the Nagata automorphism $f = f_1$ admits $f_{-1}$ as an inverse.
The Nagata conjecture was established over a field of characteristic zero by Shestakov and Umirbaev \cite{ShestakovUmirbaev}, see also \cite{Kuroda,Lamy2019}:

\begin{theorem}
\label{t:SU}
Let $\k$ be a field of characteristic zero.
Then there is an algorithm to decide whether an automorphism $g \in \Aut(\A^3)$ is tame, and in particular the Nagata automorphism is not tame.
\end{theorem}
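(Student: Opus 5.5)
The plan is to follow the Shestakov--Umirbaev strategy, in the streamlined form of Kuroda and of the author's complex-based approach. The algorithm is a degree-reduction procedure. Given $g = (g_1,g_2,g_3)$, set $\deg g = \deg g_1 + \deg g_2 + \deg g_3$. I would aim to prove the key structural statement: if $g$ is tame and not affine, then there is an explicit \emph{reduction}. This is a tame automorphism $h$, taken from a finite, computable list of types, such that $\deg(h \circ g) < \deg g$ (composition on the algebra side).

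The reductions come in two kinds. The first is the elementary reductions, where some $g_i$ is replaced by $g_i - P(g_j,g_k)$ with strictly smaller degree. The second is the exotic ones, namely the Shestakov--Umirbaev types II--IV. Each reduction step is decidable, since checking whether $\bar g_i$ lies in $\k[\bar g_j, \bar g_k]$ for leading forms is a finite linear algebra problem in bounded degree. Given the structural statement, the algorithm simply iterates reductions. It terminates because the degree decreases at each step. The input $g$ is tame if and only if the process ends at an affine map.

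The core is the structural statement, proved by induction on the length of a word in $A_3$ and $B_3$ (or elementary maps) representing $g$. The main tool is the Shestakov--Umirbaev inequality. For algebraically independent $f, h$ and a polynomial $P$, it bounds $\deg P(f,h)$ from below by a quantity involving $\deg(df \wedge dh)$ and $\deg f, \deg h$. It is proved via Poisson-bracket degree estimates. This step genuinely requires characteristic zero, because differentials of $p$-th powers vanish. With this inequality, I would analyse how degrees can fail to drop along a word. The analysis considers the sequence of degree triples and shows that any local "peak" in degree must be resolved either by an elementary reduction or by one of the finitely many exotic configurations, in which degrees satisfy rigid ratios such as $2:3$ or $n:2$. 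Equivalently, in the geometric language of this survey, one would let $\Tame(\A^3)$ act on its coset complex, which is $\CAT(0)$ and simply connected. The reduction statement then becomes the claim that any non-base vertex has a neighbouring path decreasing degree, and the analysis of links replaces the word combinatorics.

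For the Nagata automorphism, I would show directly that none of the reductions applies. Its components have degrees $5,3,1$, and the leading forms are $x_3 q^2$, $x_3 q$, $x_3$ with $q = x_2^2 - x_1x_3$. One then checks that no $\bar f_i$ lies in the algebra generated by the other two leading forms in the required way. One also checks that the degree ratios required by the exotic types are not met, or that the forced Jacobian conditions fail. Since $f$ is not affine and admits no reduction, it is not tame.

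The main obstacle is the structural statement itself. In particular, the delicate part is the case analysis excluding all non-elementary degree-reducing configurations except types II--IV. I would first try to organise it via Kuroda's sharpened inequality, which limits the cases to a handful of degree-ratio possibilities.
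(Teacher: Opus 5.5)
Your overall route is the one the paper relies on. The paper also takes the Shestakov--Umirbaev reduction theorem as a black box, in the form of \cref{t:reducibility}: an $\N^3$-valued degree on type-$3$ vertices of $\Cl_3$, elementary reductions, and normal proper K-reductions, which package the exotic cases into a single move. Your sketch has a gap in the one step you do justify, the decidability of a reduction step. The membership test $\bar g_i\in\k[\bar g_j,\bar g_k]$ suffices for an elementary reduction of $g_i$, but it is necessary in general only when $\bar g_j,\bar g_k$ are algebraically independent. When they are dependent, $P(g_j,g_k)$ can have a leading form outside $\k[\bar g_j,\bar g_k]$, which is exactly what the Shestakov--Umirbaev inequality controls. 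For instance, let $g=e\circ t$ with $t=(x_1,x_2+x_1^3,x_3+x_1^2)$ and $e=(x_1+x_2^2-x_3^3,x_2,x_3)$; this $g$ is tame. Then $\bar g_1=-3x_1^4x_3\notin\k[x_1^3,x_1^2]$, yet $g_1-(g_2^2-g_3^3)=x_1$. The other two pairs of leading forms are algebraically independent and yield nothing, so your test reports no elementary reduction of $g$ at all. Decidability has to come instead from an a priori bound on the candidate $P$, again supplied by the Shestakov--Umirbaev inequality, which turns the search into linear algebra. For the Nagata automorphism your leading-form check is valid, but only because the three pairs among $x_3q^2$, $x_3q$, $x_3$ are algebraically independent; you should say so.

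Second, your ``equivalent'' geometric formulation cannot serve as a route to reducibility. The paper never claims $\Cl_3$ is $\CAT(0)$. With its natural metric, the disks of six or eight triangles around $[x_3]$ have total angle less than $2\pi$ at $[x_3]$; the $\CAT(0)$ space is the valuation complex $X_3$. Moreover, the paper deduces the simple connectedness of $\Cl_3$ (\cref{p:C3-1-connected}) from the reduction theory, and the $\CAT(0)$ property of $X_3$ depends on it through \cref{c:product}. Using these facts as input to prove reducibility is therefore circular. Third, the non-elementary Shestakov--Umirbaev reductions are of types I--IV, not II--IV. Kuroda exhibited automorphisms admitting a type I reduction, so your case analysis cannot exclude type I; it is type IV that turns out never to occur. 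With these corrections, what remains is the structural reduction theorem itself. Your sketch, like the survey, does not prove it but takes it from \cite{ShestakovUmirbaev,Kuroda}.
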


We give some detail about the algorithm in \parref{sec:results}.
Whether $\Tame(\A^n)$ is a strict subgroup of $\Aut(\A^n)$ remains open in dimension $n \ge 4$, and also in dimension $n = 3$ over a field of positive characteristic.

This text surveys some group-theoretical questions concerning the tame group.
A first question is about the existence of normal subgroups.
Observe that the Jacobian determinant gives a natural group homomorphism
\[
\begin{tikzcd}[map]
\Aut(\A^n) & \to & \k^* \\
f = (f_1, \dots, f_n) & \mapsto & \det\left( \frac{\partial f_i}{\partial x_j}\right)_{1 \le i,j \le n}.
\end{tikzcd}
\]
The special automorphism group $\SAut(\A^n)$ is the  kernel of this homomorphism, and similarly we denote by $\STame(\A^n)$ the subgroup of tame automorphisms with Jacobian determinant $1$.

\begin{question}[Simplicity problem]
\label{q:simplicity}
Does $\STame(\A^n)$ admit any nontrivial normal subgroup?
\end{question}

We will discuss how small cancellation theory allows giving a positive answer when $n = 2$ or $3$, and also for a particular subgroup of $\STame(\A^4)$. Other cases are open.
Precisely, the small cancellation theorem is \cref{t:small_cancellation}, and the application to the above listed groups are \cref{p:WPD_example,p:WPD_in_A3,p:SQuniversal}.

Now we turn to the question of classifying finite subgroups:

\begin{question}[Linearization problem]
\label{q:linearization}
Is it true that any finite subgroup of $\Tame(\A^n)$ is conjugate to a subgroup of $\GL_n(\k)$?
\end{question}

Here it is known that the answer is negative for $n \ge 5$, see \parref{sec:finite_groups}, but we show that it is positive for $n =2$ or $3$, see \cref{p:linearizable_A2,p:linearizable_A3}. The case $n = 4$ remains open in general, but again we can handle the case of the orthogonal tame group in dimension 4, see \cref{p:linearizable_Tameq}.

Another way to compare polynomial automorphisms with linear ones is via the following:

\begin{question}[Tits alternative]
\label{q:tits}
Let $G \subset \Tame(\A^n)$ be a subgroup.
Is it true that either $G$ is virtually solvable (i.e., contains a solvable subgroup of finite index), or $G$ contains a free group over two generators?
\end{question}
This alternative was established by Tits \cite{Tits72} for any linear group $\GL_n(\k)$, with $\car(\k) = 0$. (When $\car(\k) > 0$, there is also a version of the Tits alternative, by restricting to finitely generated subgroups of $G$.)
We obtain a similar alternative for $\Tame(\A^2)$ in \cref{p:tits_A2}, for $\Tame_q(\A^4)$ in \cref{p:tits_A4q}, and for $\Tame(\A^3)$ in \parref{sec:tits_A3}.

Given $f = (f_1, \dots, f_n) \in \Aut(\A^n)$, we define the ordinary degree of $f$ as $\deg(f) = \max \{ \deg(f_i) \mid i = 1, \dots, n\}$.
Then the dynamical degree of $f$ is defined as the limit
\[
\lambda(f) = \lim_{k \to \infty} (\deg(f^k))^{1/k}.
\]
The fact that the submultiplicativity of the degree implies that the limit does exist is the classical Feteke's Lemma, see e.g. \cite[Lemma 4.1]{Lamy_Book}.

\begin{question}[Dynamical degrees]
\label{q:degree}
Is it true that for any $f \in \Tame(\A^n)$, the dynamical degree $\lambda(f)$ is an algebraic number of degree at most $n -1$ ?
\end{question}

In fact the same property is expected for $f \in \Aut(\A^n)$, and also for $f \in \End(\A^n)$ replacing the bound on the degree by $n$ instead of $n-1$: see \cite[Question 1.1.2]{BlancvanSanten}, \cite[Conjecture 2]{DangFavre}.
We give the proof for $\Tame(\A^2)$ in \cref{p:dynamical_degree}.
For $\Tame_q(\A^4)$ we mention in \cref{p:gap_dang} a gap property established by Dang \cite{Dang}, and we give some examples supporting the conjecture for $\Tame_q(\A^4)$ and $\Tame(\A^3)$ in \parref{sec:dyn_degree}.

Our general strategy to tackle the questions listed above is to use an action of $\Tame(\A^n)$ on some metric spaces with nonpositive curvature property.

In \cref{sec:coset}, we introduce the coset complex, which is a $(n-1)$-dimensional connected simplicial complex with a natural action of $\Tame(\A^n)$.
In the case $n = 2$, we recover the classical Bass--Serre tree associated with the amalgamated product structure of $\Aut(\A^2) = \Tame(\A^2)$, which allows answering Questions \ref{q:linearization}, \ref{q:tits} and \ref{q:degree}.
For \cref{q:simplicity} about normal subgroups, we use the general framework of small cancellation theory, which applies to any group acting by isometries on a Gromov hyperbolic space.
Beside the case of two variables, we explain how to apply this strategy to $\Tame(\A^3)$.

In \cref{sec:autqA4}, we study a subgroup of $\Tame(\A^4)$, namely the orthogonal tame group $\Tame_q(\A^4)$ preserving the nondegenerate quadratic form $q = x_1x_4-x_2x_3$.
In this context, the analog of the coset complex becomes a square complex.
We show that not only this square complex is Gromov hyperbolic but also satisfies the $\CAT(0)$ property, which is another flavor of nonpositive curvature. This allows to answer Questions \ref{q:simplicity}, \ref{q:linearization} and \ref{q:tits} for $\Tame_q(\A^4)$, and to establish a gap property for dynamical degrees in this group.

In \cref{sec:valuations}, we are left with Questions \ref{q:linearization}, \ref{q:tits} and \ref{q:degree} for $\Tame(\A^3)$.
We introduce another metric space, the valuation complex, which turns out to carry a $\CAT(0)$ metric in the case of three variables.
Since a finite group acting on a $\CAT(0)$ space always has a fixed point,
\cref{q:linearization} reduces to the study of stabilizers for this action.
Via a deeper study of loxodromic and parabolic elements, we can also manage to establish the Tits alternative for $\Tame(\A^3)$.
Finally, regarding dynamical degrees, even if we do not get a complete answer we explain how to compute interesting examples by using the action on the valuation complex.

Finally, a word about the ground field $\k$.
When defining the coset complex or the valuation complex, we use an arbitrary field $\k$, and arbitrary dimension $n \ge 2$.
The discussion in dimension 2 is mostly independent of a choice of ground field, since Jung's \cref{t:jung} is available over any field.
However, when discussing the linearization problem we restrict to characteristic zero since we want to use an averaging argument, even if we could include the case of a finite group whose order is prime with the characteristic.
For the results in dimension 3 or 4, we need to assume characteristic zero since we always rely on some version of the Shestakov--Umirbaev theory of reduction, which is still open in positive characteristic.

\section{The coset complex}
\label{sec:coset}

In this section, we describe the construction of a simplicial complex with a natural action of $\Tame(\A^n)$.
We show that it coincides with the classical Bass--Serre tree in dimension~$2$, and explains how it helps to answer \cref{q:simplicity} in dimension $3$.

\subsection{General construction}
\label{sec:Cn}

For any $1 \le r \le n$, we say that a polynomial map
\[
\begin{tikzcd}[map]
f\colon & \A^n & \to & \A^r \\
& (x_1, \dots, x_n) & \mapsto & (f_1, \dots, f_r)
\end{tikzcd}
\]
is an $r$-tuple of components if it can be extended as a tame polynomial automorphism $(f_1, \dots, f_n) \in \Tame(\A^n)$.
The affine group $A_r = \GL_r(\k) \ltimes \k^r$ acts on the set of $r$-tuples of components by post-composition, and we use square brackets to denote an orbit for this action:
\[
[f_1, \dots, f_r] = \{ a \circ (f_1, \dots, f_r) \mid a \in A_r\}.
\]
We construct a simplicial complex $\Cl_n$ by considering each class $[f_1, \dots, f_r]$ as a vertex, and by attaching an $(n-1)$-dimensional simplex on the vertices $[f_1]$, $[f_1, f_2]$, $\dots$, $[f_1, \dots, f_n]$ for every $f = (f_1, \dots, f_n) \in \Tame(\A^n)$.
We say that a vertex of the form $[f_1, \dots, f_r]$ has type $r$, and an edge between a vertex of type $r$ and a vertex of type $s$ has type $(r,s)$.
There is a natural left action of $\Tame(\A^n)$ on the complex $\Cl_n$, given by
\begin{equation}
\label{eq:action}
g \cdot [f_1, \dots, f_r] := [f_1 \circ g^{-1}, \dots, f_r \circ g^{-1}].
\end{equation}
This action is transitive on the set of vertices of a given type $r$, and also on the set of edges of a given type $(r,s)$.
The standard simplex associated with the identity automorphism, with vertices $[x_1]$, $[x_1, x_2]$, \dots, $[x_1, \dots, x_n]$, is a fundamental domain for the action.

\begin{figure}[t]
\centering
\includegraphics{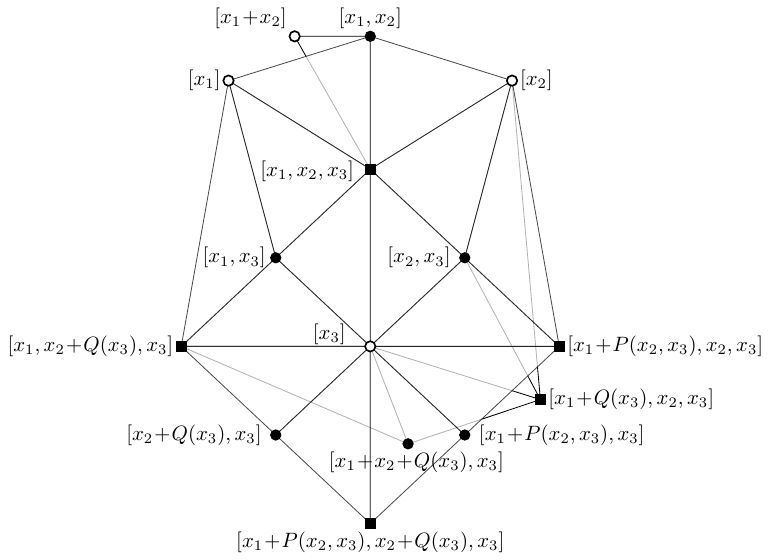}
\caption{A few simplexes in $\Cl_3$.}
\label{f:C3}
\end{figure}

\begin{example}
In dimension $n=3$ the complex $\Cl_3$ is a triangle complex, see \cref{f:C3}.
Here and in the sequel we use by convention $\circ$, $\bullet$ and \mbox{\tiny$\blacksquare$} for vertices of type $1$, $2$ and $3$ respectively.

Since $a = (x_1 + x_2, x_2) \in A_2$, we have $[x_1, x_2] = [a \circ (x_1, x_2)] = [x_1+x_2, x_2]$, and so the vertices of type 1 $[x_1]$, $[x_2]$, and $[x_1 + x_2]$ are all attached to the vertex of type $2$ $[x_1, x_2]$.
Similarly, for any $c \in \k$ the same would be true for the vertex of type $1$ $[x_1 + c x_2]$.

Given $P \in \k[x_2, x_3]$ and $Q \in \k[x_3]$, we have two ways of factorizing the triangular automorphism $(x_1 + P(x_2,x_3), x_2 + Q(x_3), x_3)$:
\begin{align*}
(x_1 + P(x_2,x_3), &\, x_2 + Q(x_3), x_3) \\
&= (x_1, x_2 + Q(x_3), x_3) \circ (x_1 + P(x_2,x_3), x_2, x_3) \\
&= (x_1 + P(x_2-Q(x_3),x_3), x_2, x_3) \circ (x_1, x_2 + Q(x_3), x_3).
\end{align*}
This produces a disk of 8 triangles around the vertex $[x_3]$.

Observe also that the relation
\begin{align*}
(x_1, x_1 + x_2 +Q(x_3), x_3) &= (-x_1 + x_2 - Q(x_3), x_2, x_3) \circ (x_2, x_1 + x_2 + Q(x_3) , x_3)
\end{align*}
implies that the two vertices of type 3 $[x_1, x_2 + Q(x_3), x_3] = [x_1, x_1 + x_2 +Q(x_3), x_3]$ and $[x_1 + Q(x_3), x_2, x_3] = [x_2, x_1 + x_2 + Q(x_3) , x_3]$ are at distance $2$, with intermediate vertex of type 2 $[x_1+x_2+Q(x_3), x_3]$.
This yields a disk of six triangles around the vertex $[x_3]$.
\end{example}

\subsection{Two variables}
\label{sec:dim2}

In dimension $n = 2$, the construction from \parref{sec:Cn} yields a graph $\Cl_2$.
In this section, we show that $\Cl_2$ is isomorphic to the classical Bass--Serre tree of $\Aut(\A^2) = \Tame(\A^2)$, as illustrated on \cref{f:tree}.
We use the following affine groups:
\begin{align*}
A_1 &= \{ax_1 + b \mid a \in \k^*, b \in \k \};\\
A_2 &= \left\lbrace (a x_1 + b x_2 + c, a' x_1 + b' x_2 + c') \mid \smallmat{a & b \\ a'&b'} \in \GL_2(\k), c,c' \in \k \right\rbrace.
\end{align*}

\begin{figure}[t]
\centering
\includegraphics{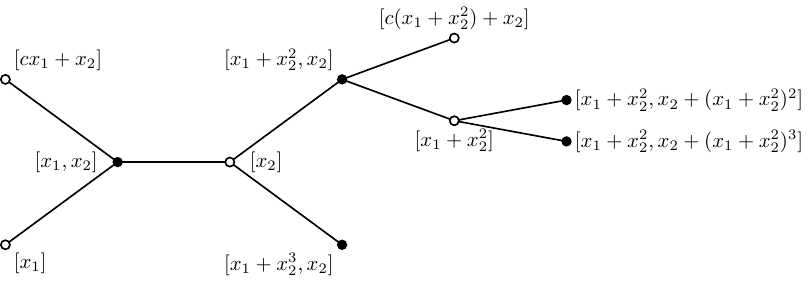}
\caption{A few vertices and edges in $\Cl_2$.}
\label{f:tree}
\end{figure}

By definition, the vertices of the graph $\Cl_2$ are of two types: classes $[f_1] = A_1 f_1$ where $f_1 \colon \A^2 \to \A^1$ is a component of an automorphism, and classes $[f_1, f_2] = A_2 (f_1,f_2)$ where $(f_1,f_2) \in \Aut(\A^2)$.
For each automorphism $(f_1,f_2) \in \Aut(\A^2)$, we attach an edge between $A_1 f_1$ and $A_2 (f_1,f_2)$.
Note that $A_2 (f_1,f_2) = A_2 (f_2,f_1)$, so there is also an edge between the vertices $A_2 (f_1,f_2)$ and $A_1 f_2$.

The edge $[x_1,x_2] - [x_2]$ is a fundamental domain for the action of $\Aut(\A^2) = \Tame(\A^2)$ on $\Cl_2$.
The stabilizer of the vertex $[x_1,x_2]$ is the affine group $A_2 = \GL_2(\k) \ltimes \k^2$, and the stabilizer of the vertex $[x_2]$ is the triangular group
\[
B_2 = \left\{ (ax_1 + P(x_2), bx_2+c) \mid a, b \in \k^*, c\in \k, P \in \k[x_2] \right\}.
\]
The fact that the graph does not admit a loop follows from the property:

\begin{lemma}
\label{l:degree}
\cite[Lemma 7.24]{Lamy_Book}
Let $f = a_0 b_1 a_1 b_2 \dots b_r a_r$ be a composition of affine and triangular automorphisms, with $b_i \in B_2 \setminus A_2$ for each $i = 1, \dots r$, and $a_i \in A_2 \setminus B_2$ for each $i = 1, \dots, r-1$.
Then $\deg(f) = \prod_{i = 1}^r \deg(b_i).$
\end{lemma}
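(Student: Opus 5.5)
We argue by induction on $r$, tracking the top-degree homogeneous parts of the two components.

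\emph{Reduction.} Since $a_0$ and $a_r$ are affine, they do not change the degree: composing on either side with an affine map preserves $\deg$, because affine maps have affine inverses. So it suffices to treat $f = b_1 a_1 b_2 \cdots a_{r-1} b_r$.

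\emph{Normal form of the factors.} Each $b_i \in B_2 \setminus A_2$ has the form $(\alpha x_1 + P(x_2), \beta x_2 + \gamma)$ with $\deg P = d_i \ge 2$, and $\deg(b_i) = d_i$. Each $a_i \in A_2 \setminus B_2$ has linear part $\smallmat{a & b \\ a' & b'}$ with $a' \ne 0$. Indeed, $A_2 \cap B_2$ consists exactly of the affine maps whose second component depends only on $x_2$.

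\emph{Inductive statement.} We prove that $g_k := a_{k-1} b_k a_k \cdots a_{r-1} b_r$, with $a_{k-1}$ omitted when $k = 1$, satisfies:
\begin{itemize}
\item $g_k = (g_{k,1}, g_{k,2})$ with $\deg g_{k,1} = \prod_{i \ge k} d_i =: D_k$ and $\deg g_{k,2} < D_k$, when no $a$ is placed in front;
\item after applying $a_{k-1}$ in front, both components have degree exactly $D_k$, and the second component has nonzero top homogeneous part.
\end{itemize}

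The base case is $b_r$ itself: its first component $\alpha x_1 + P(x_2)$ has degree $d_r$, and its second component $\beta x_2 + \gamma$ has degree $1 < d_r$. Applying $a_{r-1}$ in front gives components $a g_1 + b g_2 + c$ and $a' g_1 + b' g_2 + c'$. Since $\deg g_2 < \deg g_1$, both have degree $D$ with top part a nonzero multiple of the top part $\bar g_1$ of $g_1$; here we use $a' \ne 0$ for the second one, and $a \ne 0$ or $b \ne 0$ for the first one. Actually, only the second component matters for what follows.

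In the inductive step we apply $b_{k-1} = (\alpha x_1 + P(x_2), \beta x_2 + \gamma)$ to $h = a_{k-1} g_k = (h_1, h_2)$ with $\deg h_1, \deg h_2 \le D$ and $\deg h_2 = D$. The first component is $\alpha h_1 + P(h_2)$, and $P(h_2)$ has degree $d_{k-1} D$ with top part $c\,\bar h_2^{\,d_{k-1}} \ne 0$. This dominates $\alpha h_1$ because $d_{k-1} \ge 2$. The second component $\beta h_2 + \gamma$ has degree $D < d_{k-1} D$. This re-establishes the first form of the invariant, and the affine step then gives the second form exactly as in the base case.

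Concluding, $\deg f = \max(\deg f_1, \deg f_2) = \prod d_i$.

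The main obstacle is bookkeeping: one must avoid cancellation in top-degree parts. The key points are $\deg P \ge 2$, which makes $P(h_2)$ strictly dominate the linear term, and $a' \ne 0$, which guarantees that the second component after an affine step has full degree. Top homogeneous parts multiply without cancellation because $\k[x_1, x_2]$ is a domain, so $\overline{P(h_2)} = c\, \bar h_2^{\,d}$.
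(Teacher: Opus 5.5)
Your argument is correct and is essentially the routine top-degree computation that the paper, citing the book, calls ``a straightforward computation''. The condition $a' \neq 0$ for $a_i \notin B_2$ forces the second component to reach degree $D_k$ after each affine step, and then $P(h_2)$ with $\deg P \ge 2$ strictly dominates $\alpha h_1$.

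There is one slip to fix: your second bullet claims that after applying $a_{k-1}$ \emph{both} components have degree exactly $D_k$. This fails for instance for $a_{k-1} = (x_2, x_1)$, where the first component becomes $g_{k,2}$, of degree $< D_k$. Since your inductive step only uses $\deg h_1 \le D_k$ and $\deg h_2 = D_k$, the proof is unaffected.
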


\cref{l:degree} is a straightforward computation, and gives as an immediate corollary that $\Tame(\A^2)$ is the amalgamated product of $A_2$ and $B_2$ along their intersection, as stated in \cref{t:jung}.
Observe that the amalgamated product structure is an easy fact, the difficult part of Jung's Theorem being the equality $\Tame(\A^2) = \Aut(\A^2)$.

The fact that $\Aut(\A^2)$ acts on the tree $\Cl_2$ with fundamental domain a single edge and stabilizers of vertices $A_2$ and $B_2$, formally implies that $\Cl_2$ is the Bass--Serre tree of the amalgam.
It is not difficult to make this isomorphism explicit.
Recall that the Bass--Serre tree associated with the structure of amalgamated product $\Aut(\A^2) = A_2 *_{A_2\cap B_2} B_2$ consists in taking cosets $A_2 (f_1,f_2)$, $B_2 (f_1,f_2)$ as vertices, and cosets $(A_2 \cap B_2) (f_1,f_2)$ as edges (we use right cosets for consistency with the convention for $\Cl_2$, the classical construction with left cosets is similar).

\begin{proposition}
\label{p:C2_is_bass_serre}
The map $\phi$ defined on the set of vertices of the Bass--Serre tree by
\begin{align*}
A_2(f_1,f_2) &\mapsto A_2(f_1,f_2),\\
B_2(f_1,f_2) &\mapsto A_1 f_2,
\end{align*}
gives an isomorphism between the Bass--Serre tree and the graph $\Cl_2$.
\end{proposition}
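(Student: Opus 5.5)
The plan is to check, in turn, that $\phi$ is well defined on vertices, bijective on each type of vertex, and that it induces a bijection between edges. Throughout, both complexes are viewed as sets of right cosets on which $\Aut(\A^2)$ acts by right multiplication by the inverse.

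\textbf{Well defined.} The first assignment is the identity. For the second, suppose $B_2(f_1,f_2) = B_2(g_1,g_2)$. Then $(g_1,g_2) = b\circ(f_1,f_2)$ with $b = (ax_1+P(x_2), bx_2+c) \in B_2$. Hence $g_2 = bf_2 + c$, so $A_1 g_2 = A_1 f_2$.

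\textbf{Bijective on vertices.} For type-$2$ vertices there is nothing to prove. For the other type, surjectivity holds because every vertex $A_1 f_2$ comes from some $(f_1,f_2)\in\Aut(\A^2)$. Here we use that $A_1 f_1 = A_1 (f_1)$, and that swapping coordinates is affine, so any component can be placed in second position.

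Injectivity is the step I expect to need the most care. Suppose $g_2 = \alpha f_2+\beta$ with $\alpha \in \k^*$, $\beta\in\k$. Then $h := (g_1,g_2)\circ(f_1,f_2)^{-1}$ satisfies
\[
h = (h_1, \alpha x_2+\beta).
\]
I must show $h\in B_2$, that is, $h_1 = a x_1 + P(x_2)$. Since $h$ is an automorphism with second component $\alpha x_2+\beta$, the restriction of $h_1$ to each line $x_2 = t$ gives an automorphism of $\A^1$ over $\k[x_2]$. So $h_1 \in \k[x_2][x_1]$ has degree $1$ in $x_1$, with leading coefficient a unit of $\k[x_2]$, hence a constant. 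Concretely, $k[h_1,x_2] = k[x_1,x_2]$ forces $x_1 = Q(h_1,x_2)$, and comparing $x_1$-degrees gives $\deg_{x_1} h_1 = 1$. Alternatively, the Jacobian being constant gives the constant leading coefficient. Therefore $h\in B_2$ and $B_2(g_1,g_2) = B_2(f_1,f_2)$.

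\textbf{Edges.} Edges of the Bass--Serre tree join $A_2 f$ and $B_2 f$ for each $f=(f_1,f_2)$. Their images are $A_2(f_1,f_2)$ and $A_1 f_2$, which are joined in $\Cl_2$ by the edge attached to $(f_2,f_1)$. This uses the remark preceding the statement that $A_2(f_1,f_2) = A_2(f_2,f_1)$. Conversely, every edge of $\Cl_2$ is of this form, coming from some automorphism. So $\phi$ maps edges onto edges.

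For injectivity on edges, note that in a simplicial graph an edge is determined by its endpoints, and $\phi$ is a bijection on vertices. Hence distinct Bass--Serre edges with distinct endpoint pairs map to distinct edges. The Bass--Serre tree has no multiple edges, so this suffices, and $\phi$ is a graph isomorphism.

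Finally, $\phi$ is $\Aut(\A^2)$-equivariant by construction. As a consequence, $\Cl_2$ is a tree; this recovers the consequence of \cref{l:degree} and \cref{t:jung}.
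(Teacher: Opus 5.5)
Your proof is correct and follows essentially the same route as the paper. In both, the key point is that two automorphisms whose second components agree up to an affine change differ by an element of $B_2$, so $A_1 f_2 \mapsto B_2(f_1,f_2)$ gives a well-defined inverse on type-1 vertices; you go further than the paper by actually proving this fact (via $\k[h_1,x_2]=\k[x_1,x_2]$ and the Jacobian) and by explicitly checking that edges correspond in both directions, which the paper simply calls clear.
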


\begin{proof}
Clearly $\phi$ is a combinatorial map, meaning it sends adjacent vertices to adjacent vertices.
Moreover, $\phi$ is bijective, since we can define $\phi^{-1}(A_1 f_2)$ to be $B_2(f_1,f_2)$, where $(f_1,f_2)$ is any automorphism admitting $f_2$ as a component.
Indeed, any other way to extend $f_2$ is of the form $(af_1+P(f_2),f_2)$, and so the class $B_2(f_1,f_2)$ does not depend on the extension we choose.
\end{proof}

From the action on the tree, we easily get an answer to \cref{q:linearization} and \cref{q:degree}, as we explain now.

To obtain the linearizability of finite subgroups, we use the following criterion, which relies on an averaging argument.
Here we take advantage of the vector structure of $\k^n$ to define an endomorphism by average of automorphisms (in general such an average has no reason to be invertible).

\begin{lemma}
\label{l:abstract_linearization}
\cite[Lemma 5.1]{BisiFurterLamy}
Let $\k$ be a field of characteristic zero, and $G$ a subgroup of the group of bijections of $\k^n$ admitting a semi-direct product structure $G = M \rtimes L$ with $L\subseteq\GL_n(\k)$.
Suppose that for any finite sequence $m_1, \dots, m_r$ of $M$, the mean $\frac1r \sum_{i=1}^r m_i$ is still in $M$.
Then any finite subgroup of $G$ is conjugate by an element of $M$ to a subgroup of $L$.
\end{lemma}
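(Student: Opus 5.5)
The plan is the classical averaging argument. Let $H \subset G$ be a finite subgroup and write each $h \in H$ uniquely as $h = m_h \ell_h$ with $m_h \in M$ and $\ell_h \in L$. Since $M$ is normal in $G$, the map $\pi\colon G \to L$, $h \mapsto \ell_h$, is a group homomorphism. Consider the map
\[
m = \frac{1}{|H|} \sum_{h \in H} h \circ \pi(h)^{-1} = \frac{1}{|H|}\sum_{h\in H} m_h ,
\]
where the sum is pointwise, using the vector space structure of $\k^n$. Division by $|H|$ is allowed because $\car(\k)=0$. Each summand $m_h$ lies in $M$, so the hypothesis on means gives $m \in M$; in particular $m$ is a bijection of $\k^n$. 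This is the only step where the hypothesis is used, and it is what guarantees that the average is invertible.

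Next I will check the intertwining relation $h \circ m = m \circ \pi(h)$ for every $h \in H$. Fix $h$ and use that $\pi(h) \in L \subseteq \GL_n(\k)$ is linear, so precomposition by $\pi(h)$ commutes with averaging:
\[
m \circ \pi(h) = \frac{1}{|H|}\sum_{h'} h'\circ \pi(h')^{-1}\circ\pi(h) = \frac{1}{|H|}\sum_{h'} h'\circ \pi(h^{-1}h')^{-1}.
\]
Reindexing with $h'' = h^{-1}h'$ turns this into $\frac{1}{|H|}\sum_{h''} h\circ h''\circ\pi(h'')^{-1}$. To pull $h$ out of the sum I need post-composition by $h$ to commute with averaging, and here lies the expected main obstacle: $h$ is not affine, so $h\circ(\text{mean}) \ne \text{mean of } h\circ(\cdot)$ in general.

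I plan to get around this by applying the argument in the other order. Define instead $m' = \frac{1}{|H|}\sum_{h} \pi(h)\circ h^{-1}$, so that the linear map sits on the outside. Writing $\pi(h)\circ h^{-1} = \pi(h)\circ \ell_h^{-1} m_h^{-1} = m_h^{-1}$ shows that each term is again an element of $M$, hence $m'\in M$ by hypothesis. Now for $g \in H$,
\[
\pi(g)\circ m' = \frac{1}{|H|}\sum_h \pi(gh)\circ h^{-1} = \frac{1}{|H|}\sum_{k} \pi(k)\circ k^{-1}\circ g = m'\circ g .
\]
The first equality uses only the linearity of $\pi(g)$, which lets it pass inside the sum, and the second reindexes by $k=gh$. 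Since post-composition by a linear map and pre-composition by any map both commute with pointwise averaging, there is no obstruction.

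Hence $m' g m'^{-1} = \pi(g)$ for all $g\in H$, which shows that $H$ is conjugate by $m'\in M$ to $\pi(H)\subseteq L$, as required.
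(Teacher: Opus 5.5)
Your proof is correct and is the standard averaging argument behind the cited lemma (the survey itself only cites it). Your $m'=\frac{1}{|H|}\sum_{h}\pi(h)\circ h^{-1}=\frac{1}{|H|}\sum_h m_h^{-1}$ lies in $M$ by the mean hypothesis, and placing the linear factor on the outside is exactly what lets $\pi(g)$ pass through the pointwise mean to give $\pi(g)\circ m'=m'\circ g$; in a final write-up you should drop the abandoned first attempt with $m$.
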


\begin{proposition}
\label{p:linearizable_A2}
\cite[Proposition 7.36]{Lamy_Book}
Let $F \subseteq \Aut(\A^2)$ be a finite subgroup.
Assume the ground field $\k$ has characteristic zero.
Then $F$ is linearizable, that is, there exists $\phi \in \Aut(\A^2)$ such
that
\[
\phi F \phi^{-1} \subseteq \GL_2(\k).
\]
\end{proposition}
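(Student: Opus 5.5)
The plan is to use the action of $\Aut(\A^2) = \Tame(\A^2)$ on the Bass--Serre tree $\Cl_2$ (\cref{p:C2_is_bass_serre}) to conjugate $F$ into a vertex stabilizer. Then we linearize inside that stabilizer using \cref{l:abstract_linearization}.

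\emph{Step 1: a fixed vertex.} $F$ is finite and acts on the tree $\Cl_2$ by automorphisms, so every orbit of $F$ on vertices is finite, hence a bounded set. A finite group acting on a tree fixes a point: the finite subtree spanned by an orbit has a well-defined center, which is either a vertex or the midpoint of an edge, and it is $F$-invariant. The action preserves vertex types: type $1$ vertices $[f_1]$ go to type $1$ vertices and type $2$ vertices $[f_1,f_2]$ to type $2$ vertices. Hence no element can flip an edge, and if the center is the midpoint of an edge, both endpoints are fixed. Either way $F$ fixes a vertex $v$. The action is transitive on vertices of each type, so $v = g\cdot[x_1,x_2]$ or $v = g\cdot[x_2]$ for some $g\in\Aut(\A^2)$. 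Thus $g^{-1}Fg$ lies in the stabilizer $A_2$ or $B_2$ of the standard vertices.

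\emph{Step 2: linearize within $A_2$ and $B_2$.} Write $A_2 = \k^2 \rtimes \GL_2(\k)$, with $M$ the group of translations. The mean of finitely many translations is a translation, so \cref{l:abstract_linearization} conjugates a finite subgroup of $A_2$ into $\GL_2(\k)$ by a translation. For $B_2$, let $M=\{(x_1+P(x_2),\,x_2+c)\mid P\in\k[x_2],\ c\in\k\}$ and $L = \{(ax_1,bx_2)\mid a,b\in\k^*\}\subseteq \GL_2(\k)$.

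\begin{itemize}
\item $M$ is a subgroup, normal in $B_2$, with $M\cap L = \{\id\}$ and $B_2 = ML$. Concretely, $(ax_1+P(x_2),bx_2+c) = (x_1+P(b^{-1}(x_2-c)),\,x_2+c)\circ(ax_1,bx_2)$, possibly after adjusting the order to match the convention of the lemma.
\item The mean of $(x_1+P_i(x_2),\,x_2+c_i)$ is $(x_1+\tfrac1r\sum P_i,\ x_2+\tfrac1r\sum c_i)\in M$, which uses characteristic zero.
\end{itemize}

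Elements of $\Aut(\A^2)$ act as bijections of $\k^n$ compatibly with composition, and in characteristic zero $\k$ is infinite, so polynomial maps are determined by their values. Hence the lemma applies, and a finite subgroup of $B_2$ is conjugate into $L\subseteq\GL_2(\k)$. Composing the conjugations from Steps 1 and 2 gives the required $\phi$.

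\emph{Main obstacle.} The delicate point is the fixed-point statement for a finite group acting on a possibly non-locally-finite tree: one must check that the center of a bounded invariant set is well defined and that there is no edge inversion. The type argument above handles inversions, and the center of the finite subtree spanned by one orbit handles the rest. Checking the semi-direct product decomposition of $B_2$ is routine.
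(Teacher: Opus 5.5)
Your proposal is correct and follows the paper's proof: a fixed vertex for $F$ acting on the tree $\Cl_2$, then \cref{l:abstract_linearization} applied to the same semidirect product decompositions of $A_2$ and $B_2$. Your center-of-a-finite-subtree argument, with type preservation ruling out edge inversions, is a concrete version of the paper's circumcenter remark. One harmless slip: with the paper's composition convention the factorization is $(ax_1+P(x_2),\,bx_2+c) = (x_1+P(b^{-1}x_2),\,x_2+c)\circ(ax_1,bx_2)$, with no shift by $c$ inside $P$.
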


\begin{proof}
A finite group acting on a tree admits a global fixed point: this is a consequence of the existence of a circumcenter for a bounded set, see \cref{l:circumcenter} for a generalization.

Then up to conjugacy we can assume that $F$ is contained in $A_2$ or $B_2$, and we conclude by applying \cref{l:abstract_linearization} to the following semidirect product structures:
\begin{align*}
A_2 &= \k^2 \rtimes \GL_2(\k),\\
B_2 &= \{ (x_1 + P(x_2), x_2 + c) \mid P \in \k[x_2], c \in \k\} \rtimes \{(ax_1, bx_2) \mid a,b \in \k^*\}.
\qedhere
\end{align*}
\end{proof}

\begin{proposition}
\label{p:dynamical_degree}
\cite[Proposition 7.25]{Lamy_Book}
For any $f \in \Aut(\A^2)$, the dynamical degree $\lambda(f)$ is an integer.
Moreover, we have the alternative:
\begin{enumerate}
\item
$\lambda(f) = 1$ if and only if $f$ is conjugate to an element in $A_2$ or $B_2$.
\item
$\lambda(f) > 1$ if and only if $f$ is conjugate to an automorphism $g$ of even length in the amalgamated product, and then $\lambda(f) = \lambda(g) = \deg (g)$.
\end{enumerate}
\end{proposition}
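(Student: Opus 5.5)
Every $f \in \Aut(\A^2)$ can be brought into a normal form by conjugation, and the dynamical degree of each normal form can then be read off from \cref{l:degree}. By \cref{t:jung}, write $f$ as a reduced word $f = a_0 b_1 a_1 \cdots b_r a_r$ with $b_i \in B_2 \setminus A_2$ and $a_i \in A_2 \setminus B_2$ for $1 \le i \le r-1$ (the extremal $a_0, a_r$ may lie in $A_2\cap B_2$ or be trivial). Conjugating by $a_0$ replaces $f$ by $b_1 a_1 \cdots b_r (a_r a_0)$, a cyclic permutation of the word. We then reduce cyclically, by induction on the length of the word:
\begin{itemize}
\item If $a_r a_0 \in A_2 \setminus B_2$, the word has even length: it alternates between factors of $B_2\setminus A_2$ and $A_2\setminus B_2$, with $r \ge 1$ letters of each kind.
\item If $a_r a_0 \in A_2 \cap B_2$, it can be absorbed into $b_1$. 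Then either the word collapses to a single element of $B_2$, or, after absorbing, we conjugate by $b_1$ (or by $b_r$) to merge the two end letters and shorten the word strictly. In the latter case we repeat.
\end{itemize}
The induction ends with $f$ conjugate either to an element of $A_2$ or $B_2$, or to a cyclically reduced $g$ of even length $2r \ge 2$. Since the dynamical degree is a conjugacy invariant, it suffices to compute it for these normal forms. Conjugacy invariance holds because $\deg(h g^k h^{-1}) \le \deg(h)\deg(h^{-1}) \deg(g^k)$, and the $k$-th root of the constant factor tends to $1$.

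For $g \in A_2$, every power lies in $A_2$, so $\deg(g^k)=1$ and $\lambda=1$. For $g \in B_2$, a direct computation shows $\deg(g^k) \le d$ for all $k$, where $d = \deg(g)$. Indeed, $g^k$ is again of the form $(a^k x_1 + P_k(x_2), b^k x_2 + c_k)$, and substituting an affine function of $x_2$ into the polynomial part does not raise the degree. Hence $\lambda = 1$. For $g$ cyclically reduced of even length, the power $g^k$ is the concatenated word. It is still reduced, since each junction pairs the last letter (in $A_2\setminus B_2$) with the first letter (in $B_2\setminus A_2$). So \cref{l:degree} gives $\deg(g^k) = \bigl(\prod_i \deg(b_i)\bigr)^k = \deg(g)^k$, whence $\lambda(f) = \lambda(g) = \deg(g)$. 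This is an integer, and it is at least $2$ because each $b_i \notin A_2$ has degree at least $2$. This yields integrality together with both implications ``conjugate to $A_2$ or $B_2$ $\Rightarrow \lambda = 1$'' and ``even length $\Rightarrow \lambda = \deg(g) > 1$''.

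The converses then follow from the dichotomy. Every $f$ falls in exactly one of the two cases, since the values $\lambda=1$ and $\lambda>1$ are distinct, and the two implications already proved give the equivalences. The main obstacle is carrying out the cyclic reduction carefully. One must check that the conjugation process terminates, which holds because the length strictly decreases at each merging step. One must also check that no degenerate case is missed: for instance, a word $b_1 a_1 b_2$ with $a_1 \notin B_2$ conjugates to $(b_2b_1) a_1$, which is either of even length or reducible further. Finally, one must verify that the powers of a cyclically reduced word of even length really are reduced words, so that \cref{l:degree} applies. This last point is essentially the statement that such an element acts on the tree $\Cl_2$ as a hyperbolic isometry, while the elements conjugate into $A_2$ or $B_2$ are precisely the elliptic ones.
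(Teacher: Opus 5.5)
Your proof is correct and follows essentially the route the paper points to (via \cite[Proposition 7.25]{Lamy_Book}): conjugate $f$ to a normal form that is either an element of $A_2$ or $B_2$ or an alternating word of even length (the algebraic counterpart of the elliptic/hyperbolic dichotomy for the action on $\Cl_2$), then use conjugacy invariance of $\lambda$ together with \cref{l:degree} applied to the reduced words $g^k$. The only point left implicit is that even-length words beginning with a letter of $A_2 \setminus B_2$ are handled the same way, since \cref{l:degree} allows an arbitrary $a_0$.
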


For the Tits alternative, the starting point is that the pointwise stabilizer of the segment $[x_1] - [x_1,x_2] - [x_2]$ is the metabelian group (recall that a metabelian group is a solvable group of derived length at most 2):
\[
\{ (ax_1 + b, cx_2 + d) \mid a, c \in \k^*, b, d \in \k \}.
\]
Then we get the following classification of subgroups:

\begin{theorem}
\label{t:subgroups_Aut_A2}
\cite[Theorem 7.61]{Lamy_Book}
Let $\k$ be any field, and $G \subset \Aut(\A^2)$ a subgroup.
We have the following classification according to the type of $G$ with respect to the action on the Bass--Serre tree:
\begin{enumerate}
\item
If $G$ is elliptic, then $G$ is conjugate to a subgroup of the affine group $A_2$ or the solvable triangular group $B_2$.
\item \label{t:subgroups_Aut_A2:2}
If $G$ is parabolic, then $G$ is metabelian and not finitely generated.
\item \label{t:subgroups_Aut_A2:3}
If $G$ is elementary loxodromic, then $G$ is solvable, and there is a loxodromic element $g \in G$ and a metabelian subgroup of elliptic elements of bounded degree $H \subset G$ such that either $G = \lb g \rb \ltimes H$, or $G$ contains a subgroup of index $2$ isomorphic to $\lb g \rb \ltimes H$.
\item
If $G$ is general loxodromic, then $G$ contains a free group over two generators.
\end{enumerate}
\end{theorem}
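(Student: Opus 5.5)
The plan is to run through the standard classification of groups acting by isometries on a tree, applied to the Bass--Serre tree $\Cl_2$ of \cref{p:C2_is_bass_serre}. A subgroup $G$ is elliptic if it has a global fixed point. It is parabolic if it has no fixed point but fixes a unique end, with every element elliptic. It is loxodromic if it contains a loxodromic element: elementary when it preserves a pair of ends (the axis), general otherwise. Since $\Aut(\A^2)$ acts without inversion (vertices have types), these are the only cases, and each is treated separately.

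\textbf{Elliptic case.} If $G$ fixes a vertex, then $G$ lies in a conjugate of $\Stab[x_1,x_2] = A_2$ or of $\Stab[x_2] = B_2$, by transitivity on vertices of each type. Solvability of $B_2$ follows by writing it as $\{(x_1+P(x_2),x_2+c)\}\rtimes\{(ax_1,bx_2)\}$, where the normal factor is itself an extension of abelian groups.

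\textbf{Parabolic case.} Here $G$ fixes an end, represented by a ray $v_0,v_1,v_2,\dots$. Each element of $G$ is elliptic and fixes a subray, so $G=\bigcup_k G_k$ with $G_k$ the pointwise stabilizer of $v_k,v_{k+1},\dots$. This is an increasing union, and it never stabilizes: otherwise $G$ would fix a vertex. Hence $G$ is not finitely generated, since finitely many generators would all lie in some $G_k$. For metabelianity I would conjugate so that the ray passes through $[x_2]$ and $[x_1,x_2]$, and show that a pointwise stabilizer of a long segment is conjugate into the group of $(ax_1+P(x_2), bx_2+c)$ fixing the subsequent vertices. The key computation, which I expect to be the main obstacle, is to identify the stabilizer of a ray inside $B_2$ (after conjugation). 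Vertices $[x_2+Q(x_1)]$ further along force $b$ and $c$ to be constrained in a way that kills the nonabelian part. The result should be an extension of an abelian group of elementary-type maps by a group of diagonal-type maps in which the commutators land in an abelian group. The stabilizer of the segment $[x_1]-[x_1,x_2]-[x_2]$ being metabelian, as recalled above, is the base case. The rest is an increasing union of conjugates of metabelian groups nested consistently, and metabelianity is preserved under increasing unions.

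\textbf{Loxodromic cases.} In the elementary case $G$ preserves the axis $\ell$ of a loxodromic $g$. Let $H$ be the kernel of the action of $G$ on $\ell$. Then $G/H$ embeds in $\Isom(\ell)$, which is $\Z$ or the infinite dihedral group, giving $G=\lb g\rb\ltimes H$ after choosing $g$ of minimal translation length, or an index-$2$ subgroup of this form. $H$ fixes the segment $[x_1]-[x_1,x_2]-[x_2]$ up to conjugacy, hence is metabelian, and its elements have bounded degree since they lie in a conjugate of the diagonal-affine group above. Thus $G$ is solvable. In the general case, two loxodromic elements with distinct axis endpoints exist, and ping-pong on the ends of the tree applied to high powers of them produces a free group of rank two.
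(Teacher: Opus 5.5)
Your elliptic, parabolic and elementary loxodromic cases are sound. They rest on the input the paper points to: any group fixing a segment $[f_1]-[f_1,f_2]-[f_2]$ is conjugate into the metabelian group $\{(ax_1+b,cx_2+d)\}$. In the parabolic case you can drop the ray-stabilizer computation you flagged as the main obstacle. Each $G_k$ already fixes such a segment, so the increasing-union argument is enough. Your guess that later vertices ``kill the nonabelian part'' is also not true in general.

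\textbf{The gap is in the loxodromic dichotomy.} ``Contains a loxodromic element and preserves no pair of ends'' does not give you two loxodromic elements with disjoint pairs of endpoints. It also allows the \emph{focal} case: $G$ fixes exactly one end $\xi$ and contains loxodromic elements with different axes, all ending at $\xi$. There ping-pong breaks down, and for groups acting on trees in general no free subgroup need exist. For example, let $BS(1,2)=\lb a,t \mid tat^{-1}=a^2\rb$ act on its Bass--Serre tree. Then $t$ and $ata^{-1}$ are loxodromic with distinct axes sharing an end, yet the group is solvable. So ``general $\Rightarrow$ free subgroup'' is not formal. You must show that focal subgroups do not occur in $\Aut(\A^2)$, i.e.\ that two loxodromic elements whose axes share an end have the same axis.

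This is where the structure of $\Aut(\A^2)$ enters, beyond the metabelian segment stabilizer. One route is as follows.
\begin{itemize}
\item Let $f,g$ be loxodromic with the same attracting end $\xi$ (replace by inverses if needed), and suppose $\ell_f\neq\ell_g$. Then $R=\ell_f\cap\ell_g$ is a ray toward $\xi$.
\item Pick $n,m\ge 1$ with $n\trl(f)=m\trl(g)$. Then $k=g^{-m}f^n$ fixes $R$ pointwise.
\item The pointwise stabilizer $S$ of $R$ satisfies $f^{-1}Sf\subseteq S$, because $f^{-1}Sf$ is the pointwise stabilizer of the longer ray $f^{-1}R\supseteq R$.
\item You need this inclusion to be an equality. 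Here you use that $S$ lies in a conjugate of $\{(ax_1+b,cx_2+d)\}$ and is cut out there by polynomial conditions. A Noetherianity argument then rules out a proper self-conjugate inclusion; when $S$ is finite, as happens in characteristic zero, it is plain counting.
\item Given equality, $S$ fixes $\bigcup_{j\ge 0}f^{-j}R=\ell_f$ pointwise. Since $k\in S$, the element $g^m=f^nk^{-1}$ preserves $\ell_f$, which forces $\ell_g=\ell_f$, a contradiction.
\end{itemize}
With this lemma, a group that contains loxodromic elements but preserves no pair of ends has two loxodromic elements with disjoint endpoint pairs, and your ping-pong argument then applies.
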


This yields the Tits alternative:

\begin{proposition}
\label{p:tits_A2}
\cite[Proposition 21.16]{Lamy_Book}
Let $G \subset \Aut(\A^2)$ be a subgroup.
If the ground field $\k$ has positive characteristic, we assume that $G$ is finitely generated.
Then either $G$ is virtually solvable, or contains a free group of rank $2$.
\end{proposition}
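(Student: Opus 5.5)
The plan is to apply \cref{t:subgroups_Aut_A2} to the action of $G$ on the Bass--Serre tree $\Cl_2$ and check each of its four cases. Cases (2), (3) and (4) conclude at once. In the parabolic case, $G$ is metabelian, hence solvable. In the elementary loxodromic case, $G$ is solvable. In the general loxodromic case, $G$ contains a free group of rank $2$. In positive characteristic the parabolic case cannot even occur, because $G$ is assumed finitely generated and parabolic subgroups are not. So everything reduces to the elliptic case (1).

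In the elliptic case, after conjugation $G$ lies in $A_2$ or in $B_2$. If $G \subset B_2$, then $G$ is solvable. To see this, use the homomorphism $B_2 \to (\k^*)^2$ given by $(ax_1+P(x_2), bx_2+c) \mapsto (a,b)$. Its kernel is $\{(x_1+P(x_2), x_2+c)\}$, which is an extension of the abelian group $\k$ (the translation part $c$) by the abelian group $\{(x_1+P(x_2),x_2)\}$. Hence $B_2$ is solvable, and so is $G$.

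If $G \subset A_2 = \k^2 \rtimes \GL_2(\k)$, project $G$ to $\GL_2(\k)$; the kernel of this projection is abelian. The image $\bar G \subset \GL_2(\k)$ is a linear group, and Tits' alternative \cite{Tits72} applies to it. In characteristic zero, no finite generation is needed. In positive characteristic, $\bar G$ is finitely generated because $G$ is, so the finitely generated version applies. If $\bar G$ contains a free group of rank $2$, lift two free generators to $G$; the subgroup they generate is free, since it surjects onto a free group of rank $2$ with the generators mapping to a free basis. If instead $\bar G$ has a solvable subgroup $\bar S$ of finite index, its preimage $S \subset G$ has finite index and is an extension of $\bar S$ by an abelian group, hence solvable. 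So $G$ is virtually solvable.

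The main obstacle is really just bookkeeping. One must get the lifting argument in the $A_2$ case right, and in positive characteristic one must invoke the correct version of Tits' theorem. All the substantive content is already in \cref{t:subgroups_Aut_A2} and in Tits' theorem for linear groups.
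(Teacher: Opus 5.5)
Your proof is correct and follows the route the paper intends: the proposition is read off from the four-case classification in \cref{t:subgroups_Aut_A2}, and the only case that is not immediate is an elliptic subgroup conjugate into $A_2$, where Tits' theorem for linear groups does the work. Your projection-and-lift argument through $\GL_2(\k)$ is fine, though you could skip it by noting that $A_2$ is itself linear (it embeds in $\GL_3(\k)$), so Tits' theorem applies to $G$ directly.
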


\subsection{Small cancellation}
\label{sec:SC}

We introduce the theory of small cancellation, which allows producing normal subgroups and so answering \cref{q:simplicity}.

Let $(H, d)$ be a metric space.
We say that $H$ is geodesic if for any $x, y \in H$ with $d(x,y) = \ell$, there is a map $\gamma\colon [0,\ell] \to H$ with $\gamma(0) = x$, $\gamma(d) = y$, and which is an isometry onto its image.
We say that the image of $\gamma$ is a geodesic segment, and we denote by $[x,y]$ such a segment (not necessarily unique).
Given $x,y,z \in H$, we call triangle a choice of three geodesic segments $[x,y] \cup [y,z] \cup [z,x]$.
For any $\delta \ge 0$, we say that the triangle is $\delta$-thin if the $\delta$-neighborhood of any two sides contains the third side.
The space $H$ is Gromov hyperbolic if there is a uniform $\delta \ge 0$ such that all triangles in $H$ are $\delta$-thin.
See \cref{f:delta_neigh}.

\begin{figure}
\centering
\includegraphics{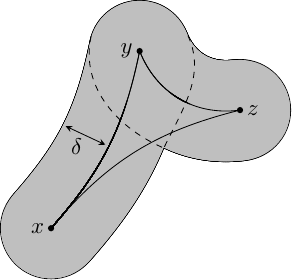}
\caption{A $\delta$-thin triangle.}
\label{f:delta_neigh}
\end{figure}

Let $f \in \Isom(H)$ be an isometry.
We say that $f$ is loxodromic if for some (hence any) $x \in H$, there exists $\lambda > 0$ such that $d(x, f^k(x)) \ge \lambda \abs k$ for any $k \in \Z$.
Now assume $G \subset \Isom(H)$ and $f \in G$ is a loxodromic element.
We say that $f$ is weakly proper discontinuous (WPD for short) if for some (hence any) $x \in H$ and any
$C \ge 1$, for $k$ sufficiently large there are only finitely many $g \in G$ satisfying $d(x, g(x)) \le C$ and $d(f^k(x), g\circ f^k(x)) \le C$.

We can now state the small cancellation theorem:

\begin{theorem}[Small Cancellation]
\label{t:small_cancellation}
\cite[Theorem 5.3, Proposition 6.34, Theorem 8.1]{DahmaniGuirardelOsin}.
Let $G$ be a group acting by isometries on a Gromov hyperbolic space $X$.
Let $g \in G$ be a WPD loxodromic element.
Then for any $C >0$, there exists an integer $n_0$ such that for any $n \ge n_0$, the normal subgroup $\normal{g^n}{G}$ generated by $g^n$ in $G$ satisfies:
\begin{enumerate}
\item There is a collection $S$ of conjugates of $g^n$ such that $\normal{g^n}{G}$ is the free group over the elements in $S$.
\item Any element $h \neq \id$ in $\normal{g^n}{G}$ is a loxodromic isometry with translation length $\trl(h) > C$.
\item In particular, if $C > \trl(g)$ then $\normal{g^n}{G}$ does not contain $g$, and so is a proper normal subgroup in $G$.
\item Moreover, $G$ is SQ-universal, meaning every countable group can be embedded into a quotient of $G$.
\end{enumerate}
\end{theorem}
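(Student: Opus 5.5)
This theorem is quoted from Dahmani--Guirardel--Osin, so what I would write is a sketch of their argument adapted to the present notation, not a new proof. The strategy is to build a \emph{rotating family} on a cone-off of $X$ and then apply a very rotating family theorem, which forces the normal subgroup to be free.

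\textbf{Step 1: the elementary closure of $g$.} Since $g$ is WPD and loxodromic, it has a quasi-axis $A_g$. Let $E(g)$ be the stabilizer of the pair of endpoints of $A_g$ at infinity. WPD gives that $E(g)$ is virtually cyclic and contains $\lb g\rb$ with finite index. Using WPD once more, distinct translates $hA_g$ with $hE(g)\neq E(g)$ have uniformly bounded coarse overlap: their $\delta$-neighbourhoods intersect in a set of diameter at most some constant $D$ depending only on $\delta$ and $g$. I expect this step to be the main obstacle. The proof goes by contradiction. A long overlap would produce many group elements moving both $x$ and $g^k x$ by a bounded amount, which WPD forbids.

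\textbf{Step 2: cone-off and rotating family.} Choose $n$ large so that $\lb g^n\rb$ is normal in $E(g)$. Also choose $n$ so that $\trl(g^n)=n\trl(g)$ is much larger than both $D$ and $C$. Cone off each translate $hA_g$ by attaching a hyperbolic cone of large radius $r_0$, with apex $c_h$. By the overlap bound, the coned space $\dot X$ is still Gromov hyperbolic with a controlled constant. Assign to the apex $c_h$ the group $G_{c_h}=h\lb g^n\rb h^{-1}$. Each $g^n$ acts on its cone by a rotation of very large angle, because it translates the base by $n\trl(g)\gg D$. These groups therefore form a \emph{very rotating family}, and they generate exactly $\normal{g^n}{G}$.

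\textbf{Step 3: the very rotating family theorem and its consequences.} This theorem (a Greendlinger-type statement proved through a windmill induction) says the following: the group $R$ generated by the family is a free product of a subfamily of the $G_c$, and every nontrivial element of $R$ not conjugate into some $G_c$ acts loxodromically on $\dot X$ with large translation length. Since each $G_c\simeq\Z$, the group $R=\normal{g^n}{G}$ is free on a set $S$ of conjugates of $g^n$, which is item (1). For item (2), I would transfer the translation length back to $X$. Elements of $G_c$ act on $X$ with translation length at least $\trl(g^n)>C$. For the remaining elements, a geodesic in $\dot X$ that passes close to apexes lifts to a path in $X$ of length at least comparable to $\trl(g^n)$. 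Hence $\trl(h)>C$ for every $h\neq\id$ in $R$. Item (3) follows immediately: $g\in R$ would give $\trl(g)>C>\trl(g)$, which is absurd.

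For item (4), the quotient $G/R$ acts on the hyperbolic space $\dot X/R$, and the image of a second independent WPD element is again WPD there. Hence $G/R$ is acylindrically hyperbolic and non-elementary, and so is $G$ itself. Acylindrically hyperbolic groups contain hyperbolically embedded subgroups of the form $F_2\times K$ with $K$ finite. SQ-universality then follows from group-theoretic Dehn filling applied to such a subgroup: one kills normal subgroups inside $F_2$, which itself is SQ-universal, so every countable group embeds into a quotient of $G$.
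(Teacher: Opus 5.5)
Your outline of (1)--(3) follows the Dahmani--Guirardel--Osin argument that the paper cites: geometric separation of the translates of a quasi-axis, cone-off, very rotating family. Look at what Step~2 actually gives. You cone off one $E(g)$-invariant quasi-line per coset $hE(g)$ (e.g.\ the union of the $E(g)$-translates of $A_g$), so the apex $c_1$ has stabilizer $E(g)$. Then $G$-equivariance of the family $c_h\mapsto h\lb g^n\rb h^{-1}$ forces $\lb g^n\rb\lhd E(g)$. This holds when $n$ is a multiple of the integer $m$ for which $\lb g^m\rb$ is the normal core of $\lb g\rb$ in $E(g)$; it does not hold for all large $n$. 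So you prove (1)--(3) only for large multiples of $m$, not ``for any $n\ge n_0$''. Say this explicitly, because the restriction is unavoidable and the printed statement is too strong. Let $G=\lb a,t\mid a^3,\ tat^{-1}=a^{-1}\rb$ act on $\R$ through $G\to\Z$, $t\mapsto 1$, $a\mapsto 0$. Then $g=t$ is loxodromic and WPD, since the action is proper with finite kernel. But for every odd $n$ one has $at^na^{-1}t^{-n}=a^2\in\normal{t^n}{G}$, a nontrivial torsion element acting trivially on $\R$, so (1) and (2) fail.

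Item (4) has a genuine gap. As printed it is false unless $G$ is not virtually cyclic: $\Z$ acting on $\R$ by translations has a WPD loxodromic element but is not SQ-universal. Your argument hides this hypothesis in the ``second independent WPD element'', which exists exactly when $G\neq E(g)$. Moreover, ``$G/R$ is acylindrically hyperbolic, and so is $G$ itself'' is not a valid inference. $\Z\times F_2$ has infinite center, so it is not acylindrically hyperbolic, although its quotient $F_2$ is. SQ-universality does pass from $G/R$ to $G$, but you would still owe a proof that the second element stays WPD on $\dot X/R$. The quotient is a detour anyway. When $G\neq E(g)$, the geometric separation from your Step~1 is exactly what shows that $E(g)$ is a proper infinite hyperbolically embedded subgroup of $G$, and this gives the hyperbolically embedded $F_2\times K$ directly.

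Finally, SQ-universality of $F_2$ is not enough for the Dehn filling step. The filling theorem only embeds $(F_2/N)\times K$ into $G/\normal{N}{G}$ when $N\lhd F_2$ avoids a certain finite set $\Phi$. So you need every countable group $Q$ to embed into some such $F_2/N$. This is true but must be argued, for instance as follows:
\begin{itemize}
\item embed $Q$ in an infinite $2$-generated simple group $S$;
\item choose a finite quotient $\psi\colon F_2\to P$ with $\psi\neq 1$ on $\Phi$;
\item let $N$ be the kernel of $F_2\to S\times P$, which avoids $\Phi$; its image contains $S\times\{1\}$ by simplicity of $S$.
\end{itemize}
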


A simplicial tree is a basic example of a Gromov hyperbolic space.
We can produce simple examples of WPD element in $\Aut(\A^2)$ with respect to the action on the Bass--Serre tree $\Cl_2$:

\begin{proposition}
\label{p:WPD_example}
\cite[Proposition 5.5]{LamyLonjou}, \cite[Lemma 4.23]{MO}.
\label{pro:auto WPD}

Assume $\car \k \neq 2$ \parent{resp. $\car(\k) = 2$}, and let $g = (x^2-y, x) \in \Aut(\A^2)$ \parent{resp. $g = (x^3+y, x)$}.
Then $g$ is loxodromic and satisfies the WPD property.
\end{proposition}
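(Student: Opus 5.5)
Write $g = a\circ b$ with $b = (x_1 - x_2^2, x_2) \in B_2\setminus A_2$ (or $(x_1+x_2^3,x_2)$ in characteristic $2$) and $a = (x_2, -x_1)$ or a similar linear swap in $A_2\setminus B_2$. Up to a harmless sign, $g = a b$ with $\deg b = 2$. Then $g^k = (ab)^k$ is a reduced word of length $2k$ in the amalgam. By \cref{l:degree} we get $\deg g^k = 2^k$, and the vertex path $[x_1,x_2] = v_0, g v_0, g^2 v_0, \dots$ in $\Cl_2$ has no backtracking. So $d(v_0, g^k v_0) = 2k$ and $g$ is loxodromic with translation length $2$. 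Its axis is the bi-infinite geodesic $\ell$ through the vertices $g^j v_0$ and $g^j w_0$ with $w_0=[x_2]$, alternating between types $2$ and $1$.

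For WPD, fix $C$ and consider $g$-elements $h$ with $d(v_0,hv_0)\le C$ and $d(g^kv_0, hg^kv_0)\le C$. Since $\Cl_2$ is a tree, $h$ moves every point of the segment $[v_0, g^kv_0]$ by at most about $C$. For $k$ large, the middle subsegment of length $\gg C$ is then mapped into $\ell$ with a bounded shift. The standard reduction: after composing with a power $g^{j}$, $|j|\le C$, from a finite set, $h$ fixes pointwise a long segment $\sigma$ of $\ell$, say of length $N$ depending only on $C$. So it suffices to show that the pointwise stabilizer of a long enough segment of the axis is finite. Here finiteness must come from the specific shape of $b$.

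Key computation: by $g$-equivariance we may take $\sigma$ to contain $[x_1]-[x_1,x_2]-[x_2]$ together with its neighbours along $\ell$. The pointwise stabilizer of the central segment is the metabelian group $\{(\alpha x_1+\beta, \gamma x_2+\delta)\}$. Fixing additionally the next vertex $g w_0$ or $g^{-1}v_0$ along the axis means conjugating by $b$ must keep the element in $A_2$ (or in $B_2$). Computing $b^{-1}(\alpha x_1+\beta,\gamma x_2+\delta)b$ requires $\alpha = \gamma^2$ and $\delta=0$ (the quadratic term $x_2^2$ must match). Iterating one more step on the other side forces $\gamma = \alpha^2$ and $\beta = 0$. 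Hence $\alpha^3=1$ and $\gamma=\alpha^2$, a finite group of at most $3$ elements. The characteristic $2$ hypothesis enters exactly here: with $x_2^2$ in characteristic $2$ the map $t\mapsto t^2$ makes $(x_1+\beta)$-type translations interact badly, namely $(x_2+\delta)^2 = x_2^2+\delta^2$ leaves a translational freedom unconstrained. The cubic removes this freedom, since the cross term $3\delta x_2^2$ forces $\delta = 0$.

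The main obstacle is this stabilizer computation: carefully identifying the vertices adjacent along the axis and verifying that fixing four or five consecutive axis vertices pins down all four parameters $\alpha,\beta,\gamma,\delta$ up to finitely many roots of unity. Once the uniform bound is established, the count of $h$ is at most $(2C+1)$ times this finite order, uniformly in $k$, which gives WPD.
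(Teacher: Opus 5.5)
Your strategy is the standard one; the paper gives no proof and only cites references. You get loxodromicity from the reduced word, and reduce WPD on a tree, via an even shift realized by a power of $g$, to finiteness of the pointwise stabilizer of a fixed-length segment of the axis. Those reductions are fine.

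The gap is in the step you flag as key: the computation is wrong as written. In the paper's conventions, $g=b\circ\tau$ with $\tau=(x_2,x_1)$ and the involution $b=(x_2^2-x_1,x_2)$. The axis contains
\[
[x_1^2-x_2]-[x_1,x_1^2-x_2]-[x_1]-[x_1,x_2]-[x_2]-[x_2^2-x_1,x_2]-[x_2^2-x_1],
\]
where the last two vertices are $g[x_1,x_2]=b[x_1,x_2]$ and $g[x_2]=b[x_1]$. For $h=(\alpha x_1+\beta,\gamma x_2+\delta)$ one gets
\[
bhb=\bigl(\alpha x_1+(\gamma^2-\alpha)x_2^2+2\gamma\delta x_2+\delta^2-\beta,\ \gamma x_2+\delta\bigr).
\]
So $bhb\in A_2$, i.e.\ fixing the type-$2$ vertex $g[x_1,x_2]$, gives only $\alpha=\gamma^2$. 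The quadratic coefficient says nothing about $\delta$, which survives in a linear term that $A_2$ allows. Moreover $bhb\in B_2$ is automatic, so your ``(or in $B_2$)'' imposes nothing. What kills $\delta$ is fixing the following type-$1$ vertex $b[x_1]$, i.e.\ $bhb\in\Stab([x_1])=\{(ax_1+c,\ast)\}=\tau B_2\tau$. That forces the cross term $2\gamma\delta x_2$ to vanish, and this, not the quadratic coefficient, is exactly where $\car\k\neq 2$ is used.

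Symmetrically, conjugation by $\tau$ reverses the axis, and the two vertices on the left give $\gamma=\alpha^2$ and $\beta=0$. So you need the whole $7$-vertex segment. Its pointwise stabilizer is indeed $\{(\alpha x_1,\alpha^2x_2)\mid \alpha^3=1\}$, as you claimed, and with this correction your counting argument goes through.

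The corrected computation also makes your characteristic-$2$ remark precise:
\begin{itemize}
\item For $(x_1^2+x_2,x_1)$, $g$ normalizes the translation group, so every translation fixes the axis pointwise and WPD fails over an infinite field.
\item For $(x_1^3+x_2,x_1)$, take $b=(x_1+x_2^3,x_2)$. The cross term $\gamma^2\delta x_2^2$ already obstructs $bhb\in A_2$. One gets $\alpha=\gamma^3$ and $\gamma=\alpha^3$, hence $\alpha^8=1$, so $\alpha=\gamma=1$ and the stabilizer is trivial.
\end{itemize}
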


Combining with \cref{t:small_cancellation}, we get:

\begin{corollary}
Let $g \in \Aut(\A^2)$ be the automorphism from \cref{p:WPD_example}.
Then for any sufficiently large integer $n$, the normal subgroup generated by $g^n$ is a proper subgroup of $\Aut(\A^2)$.
\end{corollary}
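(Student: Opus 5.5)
This follows directly from \cref{t:small_cancellation} applied to the action of $G = \Aut(\A^2)$ on the tree $X = \Cl_2$, once its hypotheses are checked and the constant $C$ is chosen. The space $\Cl_2$ is a simplicial tree by \cref{p:C2_is_bass_serre} and \cref{l:degree}. With the path metric in which every edge has length one it is geodesic and all triangles are $0$-thin, so it is Gromov hyperbolic. The group $\Aut(\A^2) = \Tame(\A^2)$ acts on it by isometries through \eqref{eq:action}. By \cref{p:WPD_example}, $g$ is loxodromic and WPD for this action. All hypotheses of \cref{t:small_cancellation} therefore hold.

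Next I choose $C$. Since $g$ is loxodromic, its translation length $\trl(g)$ is finite and positive. On a tree it equals $\min_x d(x, g(x))$, which is an integer; one can also compute it concretely. For instance, $g = (x^2-y,x)$ is the composition of the triangular map $(x^2 - y, y)$ and the transposition, so it has length two in the amalgam and translation length $2$ on $\Cl_2$. The concrete value is not needed, though: it suffices to take any $C > \trl(g)$, say $C = \trl(g) + 1$.

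Now \cref{t:small_cancellation} gives an integer $n_0$ with the following property for every $n \ge n_0$: every nontrivial element of $\normal{g^n}{G}$ is loxodromic with translation length strictly greater than $C > \trl(g)$. In particular $g \notin \normal{g^n}{G}$, so the normal subgroup generated by $g^n$ is proper. This is exactly item (3) of the theorem.

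There is no real obstacle here. The only points needing care are that the metric on $\Cl_2$ is the standard one, so that the translation length is well defined and finite, and that the action is genuinely by isometries. Both are immediate from the simplicial nature of the action.
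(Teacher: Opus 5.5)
Your argument is correct and is exactly the paper's: apply \cref{t:small_cancellation} to the action of $\Aut(\A^2)$ on the tree $\Cl_2$ with the WPD loxodromic element $g$ from \cref{p:WPD_example}, take $C > \trl(g)$, and conclude by item (3). One slip in your aside, which the argument does not use: $(x^2-y,\,y)$ is not an automorphism, and the correct factorization is $g = (y^2-x,\,y)\circ(y,x)$, a triangular map in $B_2 \setminus A_2$ composed with a linear map in $A_2 \setminus B_2$, which does confirm $\trl(g)=2$.
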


There exist many criterions to establish the Gromov hyperbolicity of a given metric space.
We mention two of them, which we will use later:

\begin{proposition}[Thin bigon criterion]
\label{p:thin_bigon_criterion}
\cite{Papasoglu}
Let $\Gamma$ be a graph.
Then $\Gamma$ is Gromov hyperbolic if and only if there is a uniform constant $\eta$ such that any two combinatorial geodesics with the same endpoints must $\eta$-fellow-travel.
\end{proposition}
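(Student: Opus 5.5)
Since this is Papasoglu's criterion, I would prove the two implications separately, and the converse will carry almost all of the work.

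\emph{Hyperbolic implies thin bigons.} Assume all geodesic triangles are $\delta$-thin. A bigon with sides $\gamma,\gamma'$ from $x$ to $y$ is a degenerate triangle $x,y,y$, so each side lies in the $\delta$-neighborhood of the other. To upgrade this to fellow-traveling, take $\gamma(t)$ and a point $\gamma'(s)$ with $d(\gamma(t),\gamma'(s))\le\delta$. The triangle inequality applied at $x$ and at $y$ gives $\abs{t-s}\le\delta$. Hence $d(\gamma(t),\gamma'(t))\le 2\delta$, and we may take $\eta=2\delta$.

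\emph{Thin bigons imply hyperbolic.} I would go through three steps.

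\begin{enumerate}
\item \emph{Stability of geodesics under moving an endpoint by one edge.} Let $y,y'$ be adjacent and consider geodesics $[x,y]$, $[x,y']$.
\begin{itemize}
\item If $d(x,y')=d(x,y)+1$, then $[x,y]$ followed by the edge $yy'$ is a combinatorial geodesic. It forms a bigon with $[x,y']$, so the two $\eta$-fellow-travel.
\item If $d(x,y)=d(x,y')$, I would subdivide every edge of $\Gamma$ to pass to the barycentric graph, where this case reduces to the previous one through the midpoint of $yy'$. Subdivision changes the geometry only by a factor $2$ and keeps bigons $2\eta$-thin, so hyperbolicity of the subdivided graph is equivalent to hyperbolicity of $\Gamma$.
\end{itemize}
In both cases $[x,y]$ and $[x,y']$ fellow-travel with a constant $\eta'$ depending only on $\eta$.

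\item \emph{Logarithmic thinness of triangles.} Take a geodesic triangle $x,y,z$ of perimeter $\ell$ and a point $p$ on $[y,z]$. I would bisect $[y,z]$ recursively and join $x$ to each dividing point by a geodesic. The aim is to show that $p$ lies within $C\log\ell$ of $[x,y]\cup[x,z]$, with $C$ depending only on $\eta$. Let $m$ be the midpoint of $[y,z]$; I would compare the subpaths $[x,m]\cup[m,y]$ with $[x,y]$ and then recurse on the two halves. Each level of the recursion should cost only an additive constant, and this is exactly where step (1) and bigon thinness enter: a path that is geodesic up to a bounded defect can be replaced by an honest geodesic at bounded cost. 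After about $\log_2\ell$ levels the pieces have bounded length, which gives the bound.

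\item \emph{Conclusion.} Apply the classical Gromov fact: if geodesic triangles of perimeter $\ell$ are $(C\log\ell + C)$-thin for all $\ell$, then the space is hyperbolic. One proof rescales a putative fat triangle and observes that the sublinear bound forces an asymptotic cone to be a real tree.
\end{enumerate}

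I expect step (2) to be the main obstacle. Controlling the bisection so that the error is additive at each level, rather than multiplicative, needs care. The fallback I would try is a direct argument by contradiction: take a triangle with a point far from both other sides, and modify one side locally to produce a combinatorial bigon with a point roughly that far from the other geodesic, contradicting the uniform $\eta$.
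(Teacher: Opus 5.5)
The paper does not prove this statement; it only cites Papasoglu. Judged on its own, your forward direction is fine, but the converse has a genuine gap.

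\textbf{Step (1), second bullet, is false.} In the subdivided graph, the two paths from $x$ to the midpoint of $yy'$ (through $y$ and through $y'$) are geodesics. They do not come from geodesics of $\Gamma$: in $\Gamma$, the path $[x,y]$ followed by $yy'$ has length $d(x,y')+1$. So the hypothesis says nothing about this bigon, and ``subdivision keeps bigons $2\eta$-thin'' is exactly what would need proof. In fact it fails. In the odd cycle $C_{2n+1}$ any two vertices are joined by a unique combinatorial geodesic, so bigons are $0$-thin. Yet if $x$ is opposite the edge $yy'$, the geodesics $[x,y]$ and $[x,y']$ go around opposite sides and are about $n$ apart halfway along. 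Hence no $\eta'(\eta)$ exists. Your chain (1)--(3) would produce a hyperbolicity constant depending only on $\eta$, which $C_{2n+1}$ rules out: it has $\eta=0$ but triangles that are only about $n/2$-thin.

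Worse, a wedge of $C_3, C_5, C_7, \dots$ at one vertex still has unique combinatorial geodesics. It contains isometric copies of all these cycles, so it is not hyperbolic. The configuration $d(x,y)=d(x,y')$ with $y,y'$ adjacent is therefore precisely where a vertex-to-vertex hypothesis is insufficient. The criterion must be read with bigons in the metric graph, i.e.\ thin bigons in the subdivision, allowing geodesics that end at midpoints of edges. Alternatively it holds for bipartite graphs, where this case never occurs. The bipartite case is the situation of \cref{p:hyperbolic}: there edges join type-$2$ vertices to type-$1$ or type-$3$ vertices.

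\textbf{Step (2) has no mechanism, even under that reading.} This step is where the real content of Papasoglu's theorem lies. Recursing on the triangles $(x,y,m)$ and $(x,m,z)$ only helps if the new side $[x,m]$ stays boundedly close to $[x,y]\cup[x,z]$. That is itself a thin-triangle statement, so the recursion is circular. The concatenation $[x,m]\cup[m,y]$ has defect $d(x,m)+d(m,y)-d(x,y)$, which is not a priori bounded, so there is no ``bounded defect'' to exploit. Step (1) only compares geodesics whose endpoints are adjacent; iterating it along $[y,z]$ accumulates error linearly in the length, not logarithmically. Your fallback, modifying a side locally to obtain a fat bigon, restates the goal rather than proving it. Step (3), the logarithmic-thinness criterion, is standard and not the issue.
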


\begin{proposition}
\label{p:criterionhyp}
\cite[Proposition 6.1]{LamyPrzytycki2019}
Let $X$ be a simplicial complex.
Suppose that there exist constants $C,C'$, such that for each combinatorial loop $\gamma$ of length $\ell$ embedded in $X$ there is a disk diagram $D \in X$ with $\delta D = \gamma$ and a set $N \in D^0$ of cardinality $\le C \ell$ such that the $C'$-neighborhoods in $D$ around $\gamma \cup N$ cover the entire $D$. Then $X$ is Gromov hyperbolic.
\end{proposition}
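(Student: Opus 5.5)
The plan is to show that a linear isoperimetric inequality, in the strong form hypothesized, forces every geodesic triangle to be uniformly thin. Since $X$ is a simplicial complex, I first reduce to its $1$-skeleton. The combinatorial distance is quasi-isometric to the natural metric, and hyperbolicity is a quasi-isometry invariant for geodesic spaces. The finite-dimensionality needed here is implicit, since the criterion is only applied to complexes like $\Cl_3$. So it suffices to prove that the graph $X^1$ is Gromov hyperbolic. For this I use the thin bigon criterion, \cref{p:thin_bigon_criterion}: I must find a uniform $\eta$ such that any two combinatorial geodesics $\alpha,\beta$ with common endpoints $\eta$-fellow-travel.

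Take such a bigon and suppose some vertex $p\in\alpha$ is at distance $R$ from $\beta$, with $R$ large. Passing to a sub-bigon between two consecutive intersection points of $\alpha$ and $\beta$, I may assume $\gamma=\alpha\cup\beta$ is an embedded loop, of length $\ell\le 2|\alpha|$. Let $D$ and $N$ be as in the hypothesis, with $|N|\le C\ell$ and $D$ covered by $C'$-balls (in $D$) around $\gamma\cup N$.

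Next I run a standard "disk diagram with short covering" argument. Choose points $p$ on $\alpha$, far from both endpoints and from $\beta$; the region near $p$ should be "deep". Consider the subset $D_\alpha\subset D$ of vertices within $C'$ of $\alpha\cup N_\alpha$, and similarly $D_\beta$, where we partition $N$ according to which side its points are closer to in $D$. These sets cover $D$, and $D$ is a disk, so there is a connected chain of vertices separating the two sides. By planarity, this chain joins a vertex near the start of $\gamma$ to one near its end, and it must pass within $2C'+1$ of both $\alpha$ and $\beta$ at many places. If instead $\alpha$ and $\beta$ diverge by $R$ over a stretch of length $L$, then the part of $D$ between them must be covered using the points of $N$. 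Each point of $N$ covers only a ball of bounded radius $C'$, and in the $1$-skeleton of $X$ the diameter of the image of such a ball is at most $C'$. Hence a "path of $N$-balls" from $\alpha$ to $\beta$ requires about $R/C'$ points of $N$ per crossing, and there must be roughly $L/(2C')$ independent crossings. This gives a quadratic lower bound, $|N|\gtrsim RL/C'^2$.

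Now compare with the upper bound $|N|\le C\ell$. Choose the stretch so that $L$ and $R$ are comparable to the full divergence; the geodesicity of $\alpha$ and $\beta$ controls $\ell$ in terms of $L$. Then $RL\lesssim C C'^2 L$, so $R$ is bounded by a constant depending only on $C,C'$. This yields $\eta$.

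I expect the main obstacle to be making the counting step rigorous: showing that the covering by $C'$-balls genuinely requires on the order of $RL$ balls rather than $R+L$. The tool I would try first is a coarse co-area argument. Look at the level sets of the function "distance in $X$ to $\beta$" restricted to $D$. For each level $t\in[C',R-C']$, the corresponding level curve in $D$ separates $\alpha$ from $\beta$, and its length is at least roughly $L$, because its endpoints lie near far-apart points of the geodesic $\alpha$ and it projects to a path near $\alpha$. Such a curve can only meet balls centered at $N$, since it stays farther than $C'$ from $\gamma$, and each ball meets at most $O(1)$ consecutive levels. Summing over the levels gives the desired $|N|\gtrsim RL$.
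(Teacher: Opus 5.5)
There is a genuine gap at your final comparison. The level-set count, even when done correctly, only sees the region where $\alpha$ and $\beta$ are far apart. The levels $t\le R/2$ give paths joining two points of $\alpha$ at distance about $R$, so at best you get $\lvert\gamma\cup N\rvert\gtrsim R^2/C'^2$. The hypothesis bounds $\lvert N\rvert$ by $C\ell$, where $\ell$ is the length of the whole sub-bigon, and nothing ties $\ell$ to $R$ or to your stretch $L$. A bigon between consecutive intersection points may be arbitrarily long and thin away from $p$, and geodesicity of $\alpha,\beta$ gives no bound of $\ell$ in terms of $L$. So you only obtain $R^2\lesssim CC'^2\ell$, which yields no uniform $\eta$.

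You need a localization step that leaves the class of bigons:
take $p=\alpha(s)$ maximizing $d(\cdot,\beta)$ and set $R=d(p,\beta)$;
join $\alpha(s\pm 2R)$ (or the endpoints of $\alpha$ if these fall outside) to nearest points of $\beta$ by rungs of length $\le R$;
close up along $\beta$.
This loop has length $\le 12R$, and $p$ is at distance $\ge R$ from the complementary arc $A_2$ (the two rungs plus the $\beta$-piece). The hypothesis only concerns embedded loops, so you must then pass to the embedded loop formed by the subarc of $\alpha$ between the last and first vertices of $A_2$ on either side of $p$, closed by an embedded path inside $A_2$. Run the co-area argument there with $d_X(\cdot,A_2)$; only then does the comparison read $R^2\lesssim(1+C)C'^2R$.

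Second, your claim that the level curves stay farther than $C'$ from $\gamma$, and so meet only $N$-balls, is false. A level set of $d_X(\cdot,\beta)$ ends on $\alpha$ and may run along it. This is repairable:
count also the at most $\ell$ balls centered on $\gamma$;
space the levels by $2C'+1$, so that each ball meets at most one chosen level (its image in $X$ has diameter $\le 2C'$, not $C'$);
use a planar separation argument in the triangulated disk to get, at each level, a path of vertices at that level joining the two sides of $p$.
Discard the ``independent crossings'' heuristic, since distinct crossing paths can share $N$-balls; only the co-area count is sound.

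Note that the survey gives no proof of this statement; it quotes Lamy--Przytycki. The hypothesis is tailored to a route that avoids bigons altogether. Assign each vertex of $D$ to a nearest point of $\gamma\cup N$ along a breadth-first forest, so that the cells are connected. An Euler characteristic count then shows that only $O(\lvert\gamma\cup N\rvert)$ triangles of $D$ meet three distinct cells. Hence $\gamma$ is a product of $O(\ell)$ loops of length $\le 6C'+3$. This is a coarse linear isoperimetric inequality, which gives hyperbolicity by the classical theorem (e.g.\ Bridson--Haefliger, III.H.2.9).
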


\subsection{Three variables}

In this section we work over a field of characteristic zero.

\subsubsection{Results}
\label{sec:results}

\begin{proposition}
\label{p:C3-1-connected}
\cite[Proposition 5.7]{Lamy2019}
The simplicial complex $\Cl_3$ is simply connected.
\end{proposition}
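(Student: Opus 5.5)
Connectedness is immediate: $\Tame(\A^3)$ acts on $\Cl_3$ with the standard simplex as fundamental domain, and $\Tame(\A^3)$ is generated by the stabilizers of its vertices (the affine group $A_3$ stabilizes $[x_1,x_2,x_3]$, and the stabilizers of $[x_1,x_2]$ and $[x_1]$ contain the elementary and triangular automorphisms). The real content is simple connectedness, and I will reduce it to a statement about words in the tame group.

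\textbf{Step 1: reduction to a presentation.} A standard result on group actions on simplicial complexes (Soulé, Brown, or the coset complex theory of Abels--Holz) says the following. Let a group act on a complex with a single simplex as strict fundamental domain. Then the complex is simply connected if and only if the group is the amalgamated sum (colimit) of the stabilizers of the faces of that simplex. Here the vertex stabilizers of the standard simplex are $A_3$, the stabilizer $\Stab[x_1,x_2]$ of maps of the form $(A_2\text{-part in } x_1,x_2) + $ polynomial in $x_3$, and $\Stab[x_1]$. So it suffices to prove that every relation among these subgroups in $\Tame(\A^3)$ follows from relations holding inside a single stabilizer.

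Equivalently, and closer to the paper's viewpoint, I will work with combinatorial loops directly. Take a loop in the 1-skeleton and assume, after a homotopy, that it passes through vertices of type $3$, that is, through classes $[f_1,f_2,f_3]$ of tame automorphisms. Consecutive type-$3$ vertices differ by an elementary automorphism, up to affine maps.

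\textbf{Step 2: induction on degree.} Define the height of a loop as the maximum of $\deg(f_1)+\deg(f_2)+\deg(f_3)$, minimized over the representatives of each class, taken over the type-$3$ vertices on the loop. Take a vertex $v$ of maximal height. I want to homotope the loop near $v$ so as to replace $v$ by vertices of strictly smaller height, without increasing the height elsewhere. Its two neighbours $u,w$ are obtained from $v$ by elementary reductions or elementary moves. The Shestakov--Umirbaev theory, in the form developed in \cite{Lamy2019} or Kuroda's reduction theory, gives the required control: any tame automorphism that is not affine admits an elementary reduction or a Shestakov--Umirbaev type reduction. Moreover, two reductions of the same automorphism can be compared through a configuration of bounded size.

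\textbf{Step 3: filling the local configurations.} I then have to show that each of these local configurations is null-homotopic, by filling it with triangles passing through lower vertices. When the two reductions involve the same component, the paths meet at a common type-$1$ or type-$2$ vertex, and the disk is the one around a vertex $[x_3]$ described in the example above, with 6 or 8 triangles. In the Shestakov--Umirbaev type case, the sequence of elementary moves is short and explicit (degrees in ratio $2:3$, and so on), so the disk can be written out by hand.

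\textbf{Main obstacle.} The main obstacle is Step 3 in its genuinely nonelementary cases. There $v$ has no elementary reduction at all, so no local move lowers the height directly, and one must prove that the neighbouring vertices of equal or higher degree still form a contractible configuration. I would first handle the case where $v$ admits an elementary reduction, using the degree inequality of Shestakov--Umirbaev (the bound on $\deg P(f,g)$ in terms of $\deg[f,g]$). After that I would treat the remaining cases by the classification of reductions in \cite{Lamy2019}.
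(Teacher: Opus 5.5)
Your outline (push the loop into type-2/type-3 vertices, induct on the maximal degree, and lower the loop near a vertex of maximal degree using the Shestakov--Umirbaev--Kuroda theory) is the route of \cite{Lamy2019}, which the paper only cites; compare the sketch of \cref{p:C_1-connected}. The gap is that the step carrying all the content is asserted, not proved.

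The existence statement you invoke, \cref{t:reducibility}, says that every type-3 vertex other than $[x_1,x_2,x_3]$ admits an elementary or a normal proper K-reduction. That only gives, for each vertex, \emph{some} path to $[x_1,x_2,x_3]$ along which the degree never increases. Simple connectedness needs more: different ways of descending must be homotopic through lower vertices. This is a statement about the type-3 vertices of degree $\le\deg v$ sharing a type-2 vertex with a vertex $v$ of maximal degree. Your sentence ``two reductions of the same automorphism can be compared through a configuration of bounded size'' is exactly this statement, given without argument, and it is essentially as strong as the proposition. Likewise, Step~1 is a correct equivalence, but it only restates the problem as the Umirbaev--Wright amalgam structure (\cref{c:product}). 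The paper deduces that structure from the proposition, not the converse.

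To see what is really needed, choose $v$ as the last vertex of maximal degree after the basepoint $[x_1,x_2,x_3]$. Then its successor $w$ is an elementary reduction, and its predecessor $u$ has $\deg u\le\deg v$. Finding a common vertex, as in your Step~3, is automatic. Since $\Ll(v)$ is the incidence graph of $\P^2_\k$, the two type-2 vertices used at $v$ share a type-1 vertex $v_1$. The path $u - v - w$ then lies in the closed star of $v_1$, which is a cone. So filling disks always exist, and the whole difficulty is degree control. Two cases must be handled.
\begin{enumerate}
\item If $\deg u<\deg v$, you must join $u$ to $w$ by a path homotopic to $u - v - w$ (for instance inside $\Ll(v_1)$) whose type-3 vertices have degree $<\deg v$. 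In the model case the two reductions assemble into a triangular automorphism, as in the $8$-triangle disk around $[x_3]$; this is the analogue of ``two distinct elementary reductions force a reduction via a triangular automorphism'' in \cref{p:C_1-connected}. Proving it requires the Shestakov--Umirbaev degree inequality, which you name but never apply.
\item If $\deg u=\deg v$ and $u$ has no elementary reduction, then every type-3 vertex sharing a type-2 vertex with $u$ has degree $\ge\deg u$. So no rerouting can place a lower vertex next to $u$. One must remove $u$ itself, rerouting between its two neighbours of equal degree, and this needs precise control of normal proper K-reductions. This is essentially the case you flag as the main obstacle, but the proposal only points to \cite{Lamy2019} for it. Your claim that the disk can be ``written out by hand'' is unsupported.
\end{enumerate}
Two further remarks. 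Your scalar height creates more such ties than the $\N^3$-valued degree of \cite{Lamy2019}: for example, $[x_1+x_2^2,x_2,x_3]$ and $[x_1+x_3^2,x_2,x_3]$ are at distance $2$ with equal height. Also, $\Stab([x_1,x_2])$ consists of the maps $(ax_1+bx_2+c,\,a'x_1+b'x_2+c',\,\alpha x_3+P(x_1,x_2))$, not of affine maps in $x_1,x_2$ plus a polynomial in $x_3$.
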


\begin{corollary}
\label{c:product}
\cite[Theorem 2]{Wright}
The group $\Tame(\A^3)$ is the amalgamated product of the three following subgroups along their pairwise intersections:
\begin{align*}
\Stab([x_1, x_2, x_3]) &= A_3; \\
\Stab([x_1,x_2]) &= \{(ax_1 + bx_2 + c, a'x_1 + b'x_2 + c', \alpha x_3 + P(x_1, x_2))\}; \\
\Stab([x_1]) &= \{(ax_1 + b, f_2(x_1, x_2, x_3), f_3(x_1, x_2, x_3) \}.
\end{align*}
\end{corollary}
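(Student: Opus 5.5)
We deduce the corollary from the simple connectedness of $\Cl_3$ (\cref{p:C3-1-connected}) via the standard theory of groups acting on simply connected simplicial complexes with a simplex as strict fundamental domain, due to Soulé, Gersten–Stallings and Haefliger. We use it in this form: if $G$ acts without inversion on a simply connected simplicial complex $X$ with strict fundamental domain a simplex $\Delta$, then $G$ is the colimit (amalgamated product) of the stabilizers of the faces of $\Delta$. If moreover $\Delta$ is a triangle and the action is by type-preserving maps, it suffices to keep the vertex stabilizers and the pairwise intersections, since these equal the edge stabilizers. So the plan is to check the hypotheses for the action \eqref{eq:action} of $\Tame(\A^3)$ on $\Cl_3$ and then compute the stabilizers.

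\textbf{Step 1: hypotheses.} The action preserves types, so no element inverts an edge and any element stabilizing a simplex fixes it pointwise. The standard triangle with vertices $[x_1]$, $[x_1,x_2]$, $[x_1,x_2,x_3]$ is a strict fundamental domain. Surjectivity onto orbits is stated in \parref{sec:Cn}: the action is transitive on vertices of each type and simplices of the form $([f_1],[f_1,f_2],[f_1,f_2,f_3])$. Strictness, meaning that two points of the triangle in the same orbit coincide, again follows from type preservation.

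\textbf{Step 2: stabilizers.} We have $g\cdot[f_1,\dots,f_r]=[f_1,\dots,f_r]$ if and only if $(x_1,\dots,x_r)\circ g^{-1}=a\circ(x_1,\dots,x_r)$ for some $a\in A_r$. This amounts to requiring that the first $r$ components of $g$ be affine in $x_1,\dots,x_r$ alone. For $r=3$ this gives $A_3$. For $r=1$ we get $g=(ax_1+b,f_2,f_3)$. For $r=2$ we get that $(g_1,g_2)$ is an affine map in $(x_1,x_2)$. Invertibility then forces $g_3=\alpha x_3+P(x_1,x_2)$, which one sees by viewing $g$ as a triangular automorphism over $\k[x_1,x_2]$: its Jacobian is a nonzero constant, so $\partial g_3/\partial x_3\in\k^*$. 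Tameness is automatic in all three cases: the first two groups are generated by affine and triangular maps, and $\Stab([x_1])$ lies in $\Tame(\A^3)$ by definition since we take stabilizers inside $\Tame(\A^3)$. Finally, the edge stabilizers are the pairwise intersections of the vertex stabilizers, and the stabilizer of the whole triangle is the triple intersection, which is contained in each of them. The amalgam therefore only needs the pairwise intersections.

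\textbf{Main obstacle.} The substantive input is \cref{p:C3-1-connected}, which we take as given. Beyond that, the delicate point is applying the correct form of the covering-space/developability theorem: simple connectedness of $X/G$ with trivial higher-dimensional gluing data gives $G\cong\pi_1$ of the complex of groups. One must check that the triangle of groups is the one with vertex groups, edge groups equal to the pairwise intersections, and face group the triple intersection. Then the presentation of the fundamental group of a simple complex of groups gives exactly the amalgamated product along pairwise intersections.
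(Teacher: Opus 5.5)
Your proposal is correct and follows the route the paper intends. The paper states the corollary directly after the simple connectedness of $\Cl_3$ (\cref{p:C3-1-connected}), and the deduction is the standard result you invoke on type-preserving actions with a simplex as strict fundamental domain; your stabilizer computations, including the Jacobian argument in characteristic zero for $\Stab([x_1,x_2])$, give exactly the listed groups. One wording fix in your last paragraph: the simple connectedness of $\Cl_3$ itself is what identifies $\Tame(\A^3)$ with $\pi_1$ of the triangle of groups, while the simple connectedness of the quotient (a triangle) is what makes that $\pi_1$ equal to the colimit of the local groups.
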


\cref{p:C3-1-connected} relies on the theory of elementary reductions by Shestakov and Umirbaev \cite{ShestakovUmirbaev}, and for this reason is specific to dimension 3.
Precisely, Umirbaev produces in \cite{Umirbaev} a description of relations in the group $\Tame(\A^3)$, and then Wright shows that this result can be reformulated in terms of an amalgamated product over three factors.
The theory of Shestakov and Umirbaev was reworked and somewhat simplified by Kuroda \cite{Kuroda}.
Then the paper \cite{Lamy2019} gives a description of the relations based on the work of Kuroda, by giving the prominent role to the action on the triangle complex $\Cl_3$. We now give an outline of the theory of reductions following \cite{Lamy2019}.
We do not discuss how this implies \cref{p:C3-1-connected}, but see \cref{p:C_1-connected} for a similar result in a slightly different context.

The first observation is that given a vertex $v_3$ of type $3$ in $\Cl_3$, we can choose a representative $(f_1, f_2, f_3) \in \Tame(\A^3)$ of $v_3$ such that the top monomials of the components $f_i$ are pairwise distinct.
Such a representative is not unique, but the monomials are.
This allows to define a degree function on the vertices of type 3, with value in $\N^3$.
By definition of the complex, two vertices of type 3 $v_3$ and $v_3'$ are at distance 2 in the complex if and only if (up to conjugacy by a permutation of coordinates) they admit representatives of the form $v_3 = [f_1, f_2, f_3]$ and $v'_3 = [f_1 + P(f_2, f_3), f_2, f_3]$, or in other words, representatives that differ by an elementary automorphism.
If moreover $\deg(v'_3) < \deg(v_3)$, we say that $v_3'$ is an elementary reduction from $v_3$.
Then we introduce a more involved type a reduction, called ``normal proper K-reductions''. Putting aside technical details, it consists in moving from one vertex $v_3$ to a neighbor vertex $w_3$ of the same degree, and then performing an elementary reduction from $w_3$ to another vertex $u_3$.
Then the main theorem is:

\begin{theorem}
\label{t:reducibility}
\cite[Theorem 4.1]{Lamy2019}
Any vertex of type $3$ in $\Cl_3$ admits either an elementary reduction or a normal proper K-reduction.
\end{theorem}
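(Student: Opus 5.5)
The plan is to run a "peak reduction" argument, following Shestakov--Umirbaev as reworked by Kuroda, on the triangle complex $\Cl_3$. Since the statement cannot apply to the vertex $[x_1,x_2,x_3]$ of minimal degree, I take $v_3 \neq [x_1,x_2,x_3]$ and fix a representative $f=(f_1,f_2,f_3)$ whose top monomials are pairwise distinct, so that $\deg(v_3)\in\N^3$ is well defined. I would measure the total degree with a generic monomial weight, so that leading parts are monomials.

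First, because $v_3$ is tame, there is a path of vertices of type $3$
\[
[x_1,x_2,x_3]=u^0,\ u^1,\ \dots,\ u^m=v_3,
\]
in which consecutive vertices are at distance $2$, that is, they differ by an elementary automorphism. I choose this path to minimize the pair (maximal degree along the path, number of times it is attained), ordered lexicographically. If the maximum is attained only at $u^m=v_3$, then $u^{m-1}$ is an elementary reduction of $v_3$ and we are done. Otherwise there is an interior peak $u^i$ with $\deg u^{i-1}<\deg u^i\ge\deg u^{i+1}$, and the goal becomes to show that $v_3$ nevertheless admits one of the two reductions. In practice I would redo this local analysis at $v_3$ itself: suppose $v_3$ admits no elementary reduction, and deduce that it admits a normal proper K-reduction.

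The core tool is the Shestakov--Umirbaev inequality in Kuroda's form. For algebraically independent $f,g$ and a polynomial $P$,
\[
\deg P(f,g)\ \ge\ q\,\deg(f)\,\bigl(\deg(f\wedge g)-\deg f-\deg g\bigr)+\deg\bigl(\partial_g^{\,q}P\bigr)(f,g)\ \dots,
\]
with equality analysis in the case where the leading forms $\bar f,\bar g$ are algebraically dependent. Here $\deg(f\wedge g)$ is the degree of the $2$-form $df\wedge dg$, which I use as a Jacobian-type degree. Apply this to the elementary move $f_1\mapsto f_1+P(f_2,f_3)$. If every such move fails to decrease the degree, then some cancellation must happen in top degree. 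This forces $\bar f_2,\bar f_3$ to be algebraically dependent, and hence, up to scalars, $\bar f_2^{\,s}=\bar f_3^{\,2}$ for some odd $s$, or a similar $(2,s)$ relation. This is exactly the numerical signature of the type II--IV reductions. I would then use these degree relations, together with the "parachute" estimate, to build an intermediate tame triple $w_3=[f_1,f_2+cf_3^{\,k},f_3]$ or similar. This $w_3$ is a neighbour of $v_3$ of the same degree, and it admits an elementary reduction to some $u_3$. The remaining checks are that $w_3$ is adjacent to $v_3$, that $\deg w_3=\deg v_3$, and that the reduction of $w_3$ is proper and normal.

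I expect the main obstacle to be this last step: the case analysis in which $v_3$ has no elementary reduction. There one must show that the only degree configurations compatible with the inequality are the Shestakov--Umirbaev configurations, for instance $\deg f_2=2\delta$, $\deg f_3=s\delta$, with $\deg f_1$ constrained by $\deg(f_2\wedge f_3)$. One must also show that in each of them the specific neighbour $w_3$ exists with an honest degree drop afterwards. My first attempt would be to lean entirely on Kuroda's weighted inequalities. I would treat the $\delta=1$ versus $\delta>1$ subcases separately, and rule out the degenerate ones by showing that they would give a path contradicting the minimality chosen in the first step.
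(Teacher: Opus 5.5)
There is a genuine gap, and it sits exactly at the step you flag as the main obstacle.

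Your local reformulation (assume $v_3$ has no elementary reduction and deduce a normal proper K-reduction) cannot work, because it never uses that $v_3$ is tame. The same analysis would apply verbatim to the class of the Nagata automorphism \eqref{eq:nagata}. As recalled right after the statement, that automorphism admits neither an elementary reduction nor a K-reduction.

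The sentence ``if every such move fails to decrease the degree, then some cancellation must happen in top degree'' has the logic reversed. The absence of elementary reductions forces no cancellation at all. The Shestakov--Umirbaev/Kuroda inequality only bounds $\deg P(f,g)$ from below; incidentally, your version is misstated, since its right-hand side is not even homogeneous in degrees. Such an inequality can constrain a configuration in which a degree-lowering move is already known to exist, for instance forcing $\bar f_2,\bar f_3$ to satisfy a $(2,s)$ relation. It can never produce the polynomial that lowers the degree of $w_3$. So in your last step there is no source for the elementary reduction of $w_3$.

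The only source of degree-lowering moves is tameness, that is, your path $u^0,\dots,u^m$. There the cancellation occurs at the peak $u^i$, which has two non-increasing elementary neighbours, and not at $v_3$, which may be far away. What is missing is a mechanism transporting this information along the path. That is the real content of the proof cited from \cite{Lamy2019}, after Kuroda: an induction in which reducibility is propagated along elementary moves.

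Whatever the bookkeeping, such an induction needs local statements of the following kind. They are proved under the induction hypothesis, a reduction is given as input, and the inequalities only serve to control its shape.

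(a) Suppose a vertex of degree $\delta$ admits a reduction, and also an elementary move to a vertex of smaller degree. Then the target of the reduction and that vertex can be joined through vertices of degree $<\delta$.

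(b) Suppose a vertex admits an elementary or normal proper K-reduction, and an elementary move leads from it to a vertex of the same degree. Then that vertex again admits one of the two reductions. Normality and properness must be established inside (b), not checked afterwards.

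Minimizing the pair (maximum, multiplicity) does not deliver this by itself, because neighbours of equal degree are ubiquitous (e.g.\ $[f_1+\epsilon(f_2,f_3),f_2,f_3]$ with $\deg \epsilon(f_2,f_3)<\deg f_1$). Your appeal to minimality to ``rule out degenerate configurations'' is a placeholder for precisely (a) and (b).
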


It is then an exercise, see \cite[Corollary 4.2]{Lamy2019}, to check that the Nagata automorphism defined in \eqref{eq:nagata} does not admit any type of reduction, and so is not tame.

We now state two further properties of the complex $\Cl_3$, whose proofs we discuss in the next subsection.

\begin{theorem}
\label{t:contractibleandgromov}
\cite[Theorems A \& B]{LamyPrzytycki2019}
The complex $\Cl_3$ is contractible and Gromov hyperbolic.
\end{theorem}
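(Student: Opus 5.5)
The plan has two parts, both resting on the reduction theory of \cref{t:reducibility}: \emph{contractibility} via a Morse-type argument with the degree function, and \emph{hyperbolicity} via \cref{p:criterionhyp}, using disk diagrams built by the same reductions.

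\textbf{Contractibility.} Since $\Cl_3$ is simply connected by \cref{p:C3-1-connected}, it suffices to kill higher homotopy. I would filter $\Cl_3$ by degree: let $\Cl_3^{\le d}$ be the full subcomplex spanned by the type-3 vertices of degree at most $d$ (ordered in $\N^3$, refined to a well-order), together with the type-1 and type-2 vertices they span.

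The base of the filtration is the star of $[x_1,x_2,x_3]$, which is a cone and hence contractible. To pass from $\Cl_3^{<d}$ to $\Cl_3^{\le d}$ one adds the type-3 vertices $v$ of degree $d$. Adding such a vertex is a homotopy equivalence if its \emph{descending link} is contractible. Here the descending link means the part of the link of $v$ (a graph, since $\Cl_3$ is 2-dimensional) meeting lower-degree vertices.

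\cref{t:reducibility} guarantees that this descending link is nonempty. I would then check that it is connected and a tree. One route is to show that all elementary reductions from $v$ share a common type-1 or type-2 vertex, i.e.\ they are all governed by a single component, so the descending link is a cone. The alternative route is to handle the K-reduction case, where one first passes to an equal-degree neighbour $w_3$, by treating same-degree moves as subdivisions within the filtration step. Contractibility then follows by Whitehead's theorem, since $\Cl_3$ is the union of the filtration.

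\textbf{Hyperbolicity.} For an embedded combinatorial loop $\gamma$, fill it by pushing its maximal-degree vertices down with reductions. This produces a disk diagram $D$ whose local pieces are the 8-triangle and 6-triangle disks around type-1 vertices described in the example above. I would aim to show two things: each loop vertex contributes only boundedly many "deep" vertices $N$ (the type-1 centers of these fans), and every triangle of $D$ lies within a uniform distance $C'$ of $\gamma\cup N$. The key geometric input would be that the links of type-1 vertices $[f_1]$ behave like Bass--Serre trees of $\Aut(\A^2)$ over $\k[f_1]$, so diagrams are "thin" around them. The conditions of \cref{p:criterionhyp} then hold.

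\textbf{Main obstacle.} The main obstacle is the descending-link analysis in the contractibility part. The K-reductions involve non-obvious equal-degree moves, and one must rule out descending links that are disconnected or contain cycles. I would first try to prove that every descending link is a star centred at a single type-2 or type-1 vertex, using the uniqueness of top monomials of representatives. If that fails, I would instead establish connectivity and acyclicity separately, via a case analysis of Kuroda's degree inequalities.
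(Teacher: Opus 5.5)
Your proposal has a genuine gap in each half, and it misses the device the paper relies on, a corrected curvature fed into Gauss--Bonnet.

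\textbf{Contractibility.} The argument rests entirely on every descending link being contractible, which you leave open. That is the whole content of the statement, and \cref{t:reducibility} does not supply it. Two type-3 vertices share a type-2 neighbour exactly when they have representatives differing by an elementary automorphism. So a type-2 vertex of $\Ll(v)$ is strictly descending only if $v$ has an elementary reduction through it. Hence if $v$ admits only a normal proper K-reduction, its strict descending link contains no type-2 vertex. It is a discrete set of type-1 vertices, and a priori even empty, since the K-reduction reaches a lower vertex only through an equal-degree intermediate $w_3$. For the filtration step to be a homotopy equivalence, you would need this set to be a single point. Alternatively, the tie-breaking among equal-degree vertices would have to add exactly the type-2 vertices that connect it without closing a hexagon in $\Ll(v)$, the incidence graph of $\P^2_\k$. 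Equal degrees are everywhere: $[x_1+x_2^3,x_2,x_3]$ and $[x_1+x_2^3+x_2^2,x_2,x_3]$ already have the same degree. Nothing in the reduction theory controls this, and your ``cone'' shortcut is only a hope. A single empty, disconnected or cyclic descending link changes the homotopy type at that step, so the filtration does not establish contractibility. The paper uses no Morse argument here. Reductions enter only through simple connectivity (\cref{p:C3-1-connected}), and the higher homotopy is killed by a curvature argument.

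\textbf{Hyperbolicity.} This part restates the hypotheses of \cref{p:criterionhyp} without a mechanism to verify them. A filling built by successive reductions is dictated by the degrees along $\gamma$, which are unbounded for loops of fixed length. So nothing bounds $\abs N$ by $C\ell$ or keeps the diagram close to $\gamma\cup N$. Moreover, the pieces you build from, the $8$- and $6$-triangle disks around a type-1 vertex, are exactly the positively curved configurations. With angle $\pi/6$ at type-1 vertices, their centers have $K=2-8/6=2/3$ and $K=2-6/6=1$.

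\textbf{The missing idea: corrected curvature.} One orients an edge of type $(1,2)$ when, in the normal form of \cref{l:orientation}, the two triangles along it differ by a polynomial $P$ in a single component. One then sets $\tilde K(v)=K(v)+(\out(v)-\inn(v))/6$. This leaves the combinatorial Gauss--Bonnet formula (\cref{p:gaussbonnet}) intact and gives $\tilde K(v)\le 0$ at every interior vertex (\cref{p:corrected_K}). That is a purely local statement, proved from the structure of links: complete bipartite at type 2, the incidence graph of $\P^2_\k$ at type 3, and \cref{l:link_type1_C3} at type 1. Both conclusions then follow.
\begin{itemize}
\item A combinatorial sphere, all of whose vertices are interior, would have total curvature $2>0$. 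So there is no embedded sphere, and together with simple connectivity this gives contractibility.
\item For an embedded disk, \cref{l:embedded_disk} and Gauss--Bonnet give $1\le \ell/2-\abs N/6$, i.e.\ $\abs N\le 3\ell$. Every vertex also lies within $10$ of $\bord\Dl\cup N$, which is precisely the input of \cref{p:criterionhyp}.
\end{itemize}
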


Then we exhibit loxodromic elements with the WPD property.
Note that the existence of loxodromic isometries implies that $\Cl_3$ has infinite diameter, so the property of being Gromov hyperbolic is not trivial.

\begin{proposition}
\label{p:WPD_in_A3}
\cite[Theorems C \& D]{LamyPrzytycki2019}
We consider the action of $\Tame(\A^3)$ over the graph $\Cl_3$.
Given $n \ge 0$, we write
\begin{align*}
g &= (x_2 - x_1x_3, x_1, x_3), &%& g^{-1} &= (x_2, x_1 + x_2 x_3 , x_3 ), \\
h &= (x_2, x_3, x_1),& % & h^{-1} &= (x_3, x_1, x_2 ),\\
f &= g^n \circ h.
\end{align*}
\begin{enumerate}
\item If $n \ge 3$, then $f$ is a loxodromic element.
\item If $n \ge 12$ and is even, then $f \in \STame(\A^3)$ and is a WPD element.
\end{enumerate}
\end{proposition}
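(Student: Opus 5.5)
The plan is to exhibit an explicit invariant geodesic line (an axis) for $f$ in $\Cl_3$, or at least a quasi-geodesic path moved by $f$ along itself, and then to use the reduction theory of \cref{t:reducibility} to control distances. First I compute how $g$ and $h$ act near the standard simplex. The map $h$ is a permutation, so it fixes the vertex $[x_1,x_2,x_3]$ and permutes the type~1 and type~2 vertices around it. The map $g=(x_2-x_1x_3,x_1,x_3)$ lies in $\Stab([x_3])$. More precisely, $g$ is the composition of the elementary map $(x_1-x_2x_3,x_2,x_3)$ with the transposition of $x_1,x_2$, so it acts as a ``rotation'' around the type~1 vertex $[x_3]$ in the link of $[x_3]$. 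That link is essentially the Bass--Serre tree of $\Aut(\A^2)$ over $\k(x_3)$, or of the $\k[x_3]$-version. Hence $g^n$ moves $[x_1,x_2,x_3]$ to a type~3 vertex whose degree grows linearly in $n$.

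Next I would choose a base vertex $v=[x_1,x_2,x_3]$ and consider the concatenation of paths $v \to f(v) \to f^2(v)\to\cdots$. Each segment $v\to f(v)$ passes through $[x_3]$ (or its image) and has a fixed combinatorial length. The key claim is a ``no-backtracking'' statement: at each junction vertex $f^k(v)$, the incoming and outgoing pieces make a large angle. For $n\ge 3$, the consecutive pieces pass through distinct type~1 vertices $f^k\cdot[x_3]$ and $f^{k}h\cdot[x_3]$ (the vertex $[x_1]$ up to the permutation), and the rotation $g^n$ around $[x_3]$ is far from trivial in the link. To make this rigorous I would use the degree function on type~3 vertices. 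I would show that $\deg(f^k(v))$ is strictly increasing along the path, with top monomials computed explicitly (roughly multiplied by a factor like $n$ each time). I would also show that any path from $v$ to $f^k(v)$ must pass through neighborhoods of the intermediate vertices $f^j\cdot[x_3]$. The tool for this is \cref{t:reducibility}: a vertex of high degree can only be reduced by elementary reductions, since the K-reductions require special degree configurations that fail here. This forces geodesics toward $v$ to go through the unique reduction path. Hence $d(v,f^k(v))\ge ck$, which gives loxodromicity.

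For the WPD property I would first check $\det \mathrm{Jac}(g)=1$ and $\det\mathrm{Jac}(h)=1$, the latter because $h$ is a cyclic permutation. Up to sign the determinant is $-1$ for $g$, so the evenness of $n$ gives $f\in\STame(\A^3)$. Then I would estimate the set of $\phi$ with $d(v,\phi v)\le C$ and $d(f^kv,\phi f^k v)\le C$. Using hyperbolicity (\cref{t:contractibleandgromov}) and the axis from the previous step, such a $\phi$ must nearly fix a long segment of the axis, so it lies in the pointwise stabilizer of several consecutive vertices $f^j[x_3]$ and $f^j(v)$. That stabilizer is the intersection of conjugates of $A_3$ and of $\Stab([x_3])$. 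I would compute it explicitly and show that it is finite, or even trivial modulo a finite diagonal group. For this the lengthy segment ($n\ge12$) is needed so that the polynomial parts get killed by degree considerations.

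The main obstacle is this last step. Finite-bounded-displacement ``coarse stabilizers'' are not the same as pointwise stabilizers, and the stabilizers of vertices in $\Cl_3$ are huge, for instance $\Stab([x_1])$. I would try to handle it by showing that the axis is ``strongly contracting'': the neighborhoods of the links at the junctions are narrow enough that any element moving two far points by at most $C$ must fix a middle subsegment exactly. Then I would compute that the fixator of three consecutive type~3 vertices along the axis is contained in a finite group of diagonal linear maps.
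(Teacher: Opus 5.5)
There is a genuine gap, and it is in the step carrying the loxodromicity argument: the lower bound $d(v,f^k(v))\ge ck$. \cref{t:reducibility} is an existence statement: every type~3 vertex admits \emph{some} reduction. It therefore produces degree-decreasing paths towards $v$, which is an upper-bound and connectivity tool and is what feeds into simple connectedness. It says nothing about geodesics, which need not be degree-monotone.

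In fact degree is not comparable with distance at all, because a type~1 vertex is adjacent to type~3 vertices of arbitrarily different degrees. Already $[x_3]$ is adjacent to both $v$ and $f(v)=g^n(v)$, so $d(v,f(v))=2$ while $\deg f(v)$ is of order $n$. So ``$\deg f^k(v)$ increases'' together with ``only elementary reductions are available'' cannot force geodesics through the vertices $f^j[x_3]$. Likewise, you cannot expect every path from $v$ to $f^k(v)$ to pass near these vertices in a contractible $2$-complex. At best this holds for geodesics, and proving it is the whole content of the statement.

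The missing ingredient is the nonpositive-curvature argument of \parref{sec:curvature}:
\begin{enumerate}
\item Take the path $\gamma(2k)=f^k[x_1]$, $\gamma(2k+1)=f^k[x_1,x_3]$, and assume $d(\gamma(s),\gamma(t))<\frac16|s-t|$ with $|s-t|$ minimal.
\item Close up the subpath with a geodesic to get an embedded disk.
\item Interior vertices have $\tilde K\le 0$ by \cref{p:corrected_K}.
\item The boundary curvature along $\gamma$ is very negative because of the choice of $f$: the two edges of $\gamma$ at each type~1 vertex $f^k[x_3]$ are far apart in its link, and this is where $n\ge 3$ enters.
\item The short geodesic side contributes too little to compensate, which contradicts \cref{p:gaussbonnet}.
\end{enumerate}

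For WPD the same gap reappears in sharper form, as you partly recognize. Hyperbolicity alone only says that $\phi$ moves points of a middle subsegment by a bounded amount. Since $\Cl_3$ is not locally finite and vertex stabilizers are huge, ``nearly fixes a long segment'' does not give ``lies in the pointwise stabilizer''.

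What the paper proves is an exact statement, again by Gauss--Bonnet applied to the disk bounded by a subpath of $\gamma$ and three geodesics. If $|t-s|>12C$, then any geodesic between points at distance at most $C$ from $\gamma(s)$ and $\gamma(t)$ actually meets $\gamma$ \cite[Lemma 7.9]{LamyPrzytycki2019}. This is the ``strong contraction'' you hope for, but without the corrected-curvature machinery you have no way to establish it, and your final stabilizer computation has nothing to attach to.

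The Jacobian part is fine: $\det \mathrm{Jac}(g)=-1$ (not $1$, as you first write) and $\det \mathrm{Jac}(h)=1$, so $f\in\STame(\A^3)$ for even $n$.
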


\subsubsection{The curvature argument}
\label{sec:curvature}

Let $v \in \Cl_3$ be a vertex.
The link $\Ll(v)$ of $v$ is the graph of vertices at distance $1$ from $v$, with an edge between $v', v''$ if there is a triangle in $\Cl_3$ with vertices $v,v',v''$.

\begin{lemma}
Let $v \in \Cl_3$ be a vertex.
\begin{enumerate}
\item If $v$ has type $2$ then $\Ll(v)$ is a complete bipartite graph.
\item If $v$ has type $3$ then $\Ll(v)$ is isomorphic to the incidence graph of the projective plane $\P^2_\k$.
\end{enumerate}
\end{lemma}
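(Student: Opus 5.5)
By transitivity of the action of $\Tame(\A^3)$ on vertices of a given type, it suffices to treat the standard vertices $v = [x_1,x_2]$ for type $2$ and $v = [x_1,x_2,x_3]$ for type $3$. The link is then determined by the simplices containing $v$. Every triangle has exactly one vertex of each type, so it is enough to list the neighbours of $v$ of the two other types and decide which pairs of them span a triangle together with $v$.

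\emph{Type $2$.} Let $v=[x_1,x_2]$. Its neighbours of type $1$ are the classes $[f_1]$ for which some tame $(f_1,f_2,f_3)$ satisfies $[f_1,f_2]=v$. Then $(f_1,f_2)$ is an affine image of $(x_1,x_2)$, so $f_1 = a x_1 + b x_2 + c$ with $(a,b)\neq(0,0)$. These classes are the lines $[ax_1+bx_2]$, indexed by $\P^1_\k$.

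Its neighbours of type $3$ are the classes $[f_1,f_2,f_3]$ with $[f_1,f_2]=v$. Up to the affine action we may take $f_1=x_1$ and $f_2=x_2$. A tame $(x_1,x_2,f_3)$ is an automorphism, so its Jacobian determinant is $\partial f_3/\partial x_3 \in \k^*$, which forces $f_3 = \alpha x_3 + P(x_1,x_2)$. These vertices are therefore indexed by the cosets of $\Stab([x_1,x_2,x_3])$ in the group $\Stab([x_1,x_2])$ from \cref{c:product}.

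Next, given any type-$1$ neighbour $[\ell]$ with $\ell = ax_1+bx_2$, and any type-$3$ neighbour $w=[x_1,x_2,\alpha x_3+P]$, complete $\ell$ to a basis $(\ell,\ell')$ of $\mathrm{span}(x_1,x_2)$. Then $(\ell,\ell',\alpha x_3+P)$ is tame and represents both $v$ and $w$, and its first component is $\ell$. Hence $[\ell],v,w$ span a triangle. Every type-$1$ neighbour is thus joined in $\Ll(v)$ to every type-$3$ neighbour. There are no edges between two type-$1$ or two type-$3$ vertices, since no triangle contains two vertices of the same type. So $\Ll(v)$ is complete bipartite.

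\emph{Type $3$.} Let $v=[x_1,x_2,x_3]$. A type-$1$ neighbour $[f_1]$ comes from some $g$ with $[g]=v$, hence from an affine $g$. So $f_1$ is an affine form with nonzero linear part, and the class $[f_1]$ is exactly the affine plane $\{f_1=0\}$ taken with all its parallel translates. These classes correspond to lines through the origin in the dual space, that is, to points of the projective plane $\P(\k^{3})^*$.

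Similarly, a type-$2$ neighbour is a class $[f_1,f_2]$ with $f_1,f_2$ affine with independent linear parts, modulo $A_2$. Such a class is determined by the $2$-dimensional subspace $\mathrm{span}(\bar f_1,\bar f_2)$ of linear forms, where $\bar f_i$ denotes the linear part of $f_i$. These subspaces are the lines of the same projective plane.

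A triangle $[f_1],[f_1,f_2],v$ exists exactly when the point $[\bar f_1]$ lies on the line $\mathrm{span}(\bar f_1,\bar f_2)$. Conversely, any incident pair is realised by extending $(f_1,f_2)$ to an affine automorphism. So $\Ll(v)$ is the incidence graph of $\P^2_\k$.

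\emph{Main obstacle.} The only delicate point is to check that the classes really correspond bijectively as claimed, namely that $[f_1]$ depends only on the linear part of $f_1$ up to scalar. This holds because $A_1$ contains the translations and the homotheties. The same check is needed for pairs, where $A_2$ absorbs translations and changes of basis. In the type-$2$ case one must also use that every extension of $(x_1,x_2)$ to a tame automorphism has third component of the form $\alpha x_3+P(x_1,x_2)$, which the Jacobian argument above provides.
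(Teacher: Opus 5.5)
Your proof is correct and follows the natural direct verification; the survey itself states this lemma without proof, quoting the literature. You reduce by transitivity to $[x_1,x_2]$ and $[x_1,x_2,x_3]$, list the neighbours of the two other types, and check which pairs span a triangle with $v$.

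Two small points need tidying:
\begin{itemize}
\item The step from $\partial f_3/\partial x_3\in\k^*$ to $f_3=\alpha x_3+P(x_1,x_2)$ uses characteristic zero. That is fine in this section. Over an arbitrary field, a degree count in $x_3$ over $\k[x_1,x_2]$ gives the same conclusion.
\item Your aside on cosets should refer to $\Stab([x_1,x_2])\cap\Stab([x_1,x_2,x_3])$, since $A_3$ is not contained in $\Stab([x_1,x_2])$.
\end{itemize}
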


Now we describe the link of a vertex of type $1$, which we can assume to be $[x_3]$, by transitivity of the action of $\Tame(\A^3)$.
Let $\Tl_{\k(x_3)}$ denote the Bass--Serre tree of $\Aut(\A^2)$ over the field $\k(x_3)$, see \parref{sec:dim2}.
There is a natural projection $\pi\colon \Ll([x_3]) \to \Tl_{\k(x_3)}$, which sends $[f_1, x_3]$ to $[f_1]$, and $[f_1,f_2,x_3]$ to $[f_1,f_2]$.
In particular, the fiber $\pi^{-1}([x_1])$ consists of all classes of the form $[x_1 + t(x_3), x_3]$, for $t \in \k[x_3]$ a polynomial without constant nor linear part.
Similarly, the fiber  $\pi^{-1}([x_2])$ consists of all classes of the form $[x_2 + t'(x_3), x_3]$, and they are at distance 2 from any classes in $\pi^{-1}([x_1])$, via a vertex of the form $[x_1 + t(x_3), x_2 + t'(x_3), x_3]$.
We have obtained:

\begin{lemma}
\label{l:link_type1_C3}
Let $v$ be a vertex of type $1$ in $\Cl_3$.
Then the graph $\Ll(v)$ has the following properties:
\begin{enumerate}
\item There exists an unbounded subtree $\Tl \subset \Tl_{\k(x_3)}$ and a surjective map of graphs $\pi \colon \Ll(v) \to \Tl$.
\item For any pairs $u,u'$ of vertices of type $1$ at distance $2$ in $\Tl$, and any choice of $w \in \pi^{-1}(v), w' \in \pi^{-1}(v')$, $w$ and $w'$ are at distance $2$ in $\Ll(v)$.
\end{enumerate}
\end{lemma}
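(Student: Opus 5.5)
By transitivity of the $\Tame(\A^3)$-action on vertices of type $1$, we may assume $v = [x_3]$. The plan is to make the projection $\pi$ described just before the statement precise, identify its image $\Tl$, and then verify the distance-$2$ property by writing down explicit common neighbours.

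First we describe the vertices of $\Ll([x_3])$. A vertex of type $2$ adjacent to $[x_3]$ has the form $[f_1, x_3]$, and a vertex of type $3$ has the form $[f_1,f_2,x_3]$, with $(f_1,f_2,x_3) \in \Tame(\A^3)$. An automorphism fixing the last coordinate $x_3$ restricts, over the generic point of $\A^1_{x_3}$, to an automorphism $(f_1,f_2) \in \Aut(\A^2_{\k(x_3)})$. We define
\[
\pi([f_1,x_3]) = [f_1], \qquad \pi([f_1,f_2,x_3]) = [f_1,f_2],
\]
where the classes on the right are taken in the Bass--Serre tree $\Tl_{\k(x_3)}$ of \parref{sec:dim2}, built over the field $\k(x_3)$.

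Next we check that $\pi$ is well defined and a map of graphs. Two representatives of the same vertex differ by post-composition by an element of $A_1$ (respectively $A_3$) that preserves $x_3$. On the first two coordinates such an element acts as an element of $A_1$ (respectively $A_2$) over $\k$, with possibly a multiple of $x_3$ added. This is affine over $\k(x_3)$, so the class in $\Tl_{\k(x_3)}$ is unchanged. An edge $[f_1,x_3]$--$[f_1,f_2,x_3]$ of $\Ll([x_3])$ comes from a triangle, and it maps to the edge $[f_1]$--$[f_1,f_2]$ of $\Tl_{\k(x_3)}$. We then set $\Tl = \pi(\Ll(v))$.

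The main point is that $\Tl$ is a subtree and is unbounded. It is connected because $\Ll([x_3])$ is connected: the stabilizer of $[x_3]$ is generated by its intersections with the stabilizers of the two other vertices of the standard triangle, by \cref{c:product}. A connected subgraph of a tree is a subtree. For unboundedness, take the elements $g_k = (x_2 - x_1x_3^k, x_1, x_3)$. They are tame and fix $x_3$, and over $\k(x_3)$ they are of the form $(x_2 - c\,x_1, x_1)$ with $c \in \k(x_3)$. Those are affine, so we use instead $(x_2 + x_1^2, x_1, x_3)$ composed with such maps. Powers of $(x_2 + x_1^2, x_1, x_3)$ restrict to a loxodromic element of $\Aut(\A^2_{\k(x_3)})$, by \cref{l:degree}, since it is an element of even length in the amalgam. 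Its orbit in $\Tl$ is therefore unbounded.

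For part (2), note that vertices of type $1$ at distance $2$ in the tree $\Tl$ share a common neighbour $[f_1,f_2]$. After acting by an element of the stabilizer of $[x_3]$, we reduce to the case $u=[x_1]$ and $u'=[x_2]$; whether this reduction is possible depends on $\pi$ being equivariant, which we check directly. The fibers over $[x_1]$ and $[x_2]$ are computed as in the discussion preceding the statement: they consist of $w=[x_1+t(x_3),x_3]$ and $w'=[x_2+t'(x_3),x_3]$. The triple
\[
(x_1+t(x_3),\, x_2+t'(x_3),\, x_3)
\]
is triangular, hence tame. So $[x_1+t,x_2+t',x_3]$ is adjacent to both $w$ and $w'$, and these two vertices are distinct because $\pi(w) \neq \pi(w')$. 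Hence $w$ and $w'$ are at distance exactly $2$ in $\Ll(v)$.

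The hardest step is computing the fibers of $\pi$ exactly. We need to show that every type-$2$ vertex $[f_1,x_3]$ with $\pi([f_1,x_3]) = [x_1]$ has the form $[x_1+t(x_3),x_3]$. Over $\k(x_3)$, the condition says $f_1 = a(x_3)x_1 + b(x_3)$. For this to be a polynomial component together with $x_3$, we need $a \in \k^*$ and $b \in \k[x_3]$. The constant and linear parts of $b$ can then be absorbed by $A_1$ together with the extra $x_3$ term.
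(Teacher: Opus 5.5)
You set things up exactly as the paper does: the same projection $\pi$ to $\Tl_{\k(x_3)}$, the same fibres, and the same common neighbour $[x_1+t(x_3),x_2+t'(x_3),x_3]$. The gap is the reduction in part (2).

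To move an arbitrary pair $u,u'$ of type-$1$ vertices of $\Tl$ at distance $2$ onto $([x_1],[x_2])$, equivariance of $\pi$ is not enough. You need $\Stab([x_3])$ to act transitively on such pairs, and it does not. Take $u=[x_1]$ and $u'=[x_1+x_3x_2]$.
\begin{itemize}
\item Both lie in $\Tl$, as images of the link vertices $[x_1,x_3]$ and $[x_1+x_3x_2,x_3]$; the latter comes from the tame map $(x_3,x_1+x_3x_2,x_2)$.
\item Both are joined in $\Tl$, by images of link edges, to $[x_1,x_2]=[x_1+x_3x_2,x_2]=\pi([x_1+x_3x_2,x_2,x_3])$. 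Hence $d_{\Tl}(u,u')=2$.
\item For all $t,t'$, the vertices $w=[x_1+t(x_3),x_3]$ and $w'=[x_1+x_3x_2+t'(x_3),x_3]$ have no common type-$3$ neighbour $[g_1,g_2,g_3]$. Otherwise the $\k$-span of $1,g_1,g_2,g_3$ would contain, hence equal, the span of $1,x_3,x_1+t,x_1+x_3x_2+t'$. Then $(x_1+t,\,x_1+x_3x_2+t',\,x_3)$ would differ from $(g_1,g_2,g_3)$ by an element of $A_3$ and be an automorphism, but its Jacobian determinant is $x_3$.
\end{itemize}
So with this $\pi$, statement (2) is false as written, and no argument can close this step. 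The paper's discussion also checks only $u=[x_1]$, $u'=[x_2]$ and makes the same implicit leap.

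Around $[x_1,x_2]$, the neighbours in $\Tl$ are the $[\alpha x_1+\beta x_2]$ with $\alpha,\beta\in\k[x_3]$ coprime. Your reduction works exactly for pairs $(\alpha,\beta),(\gamma,\delta)$ with $\alpha\delta-\beta\gamma\in\k^*$. A clean fix is to replace $\Tl_{\k(x_3)}$ by the Bass--Serre tree of the finer splitting of the tame group over $\k[x_3]$, in which $\GL_2(\k[x_3])\ltimes\k[x_3]^2$ is replaced by $\GL_2(\k)\ltimes\k[x_3]^2$. This uses Nagao's theorem $\GL_2(\k[x_3])=\GL_2(\k)*_{B(\k)}B(\k[x_3])$, with $B$ the upper triangular matrices. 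There the neighbours of a type-$2$ vertex form $\P^1(\k)$, on which $\GL_2(\k)$ is $2$-transitive, and your fibre computation goes through verbatim.

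Your argument that $\Tl$ is connected also does not work as stated. An amalgam decomposition as in \cref{c:product} says nothing about whether one factor is generated by its intersections with the other two; if the other two factors were trivial, such a decomposition would hold tautologically. The generation you want amounts to saying that every tame automorphism $(f_1,f_2,x_3)$ lies in the group generated by affine and elementary automorphisms over $\k[x_3]$. That is a genuine Shestakov--Umirbaev-type input; the paper simply asserts that $\Tl$ is a subtree.

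Two small slips: representatives of a type-$2$ vertex differ by an element of $A_2$, not $A_1$. The false start with the maps $g_k$ should be deleted, though the unboundedness argument with $(x_2+x_1^2,x_1,x_3)$ is fine.
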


We put a metric on $\Cl_3$ by considering each triangle as being isometric to the Euclidean triangle with angles $\frac{\pi}6$, $\frac{\pi}2$, $\frac{\pi}3$, at respectively the vertex of type $1$, $2$ and $3$.

Let $\Dl \subset \Cl$ be a subcomplex homeomorphic to a disk or a sphere.
Let $v \in \Dl$ be a vertex.
We denote by $d(v)$ the number of triangles in $\Dl$ around the vertex $v$, and we set $\theta(v) = \frac{\pi}6$, $\frac{\pi}2$, or $\frac{\pi}3$, according whether $v$ as type $1$, $2$ or $3$.
Then we define the curvature $K(v)$ of $\Dl$ at $v$ by the following formulas:
\[
\pi K(v) =
\begin{cases}
\pi - d(v) \theta(v) & \text{ if $\Dl$ is a disk and $v$ a boundary vertex,} \\
2\pi - d(v) \theta(v) &\text{ otherwise.}
\end{cases}
\]

\begin{proposition}[Combinatorial Gauss--Bonnet]
\label{p:gaussbonnet}
Let $\Dl \subset \Cl$ be a subcomplex.
Then
\[
\sum_{v \in \Dl} K(v) =
\begin{cases}
1 & \text{if $\Dl$ is homeomorphic to a disk},\\
2 & \text{if $\Dl$ is homeomorphic to a sphere}.
\end{cases}
\]
\end{proposition}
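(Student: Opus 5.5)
The plan is the usual double count: sum the angle defects over all vertices, then compare with the Euler characteristic of $\Dl$. Since $\Dl$ is a finite subcomplex homeomorphic to a disk or a sphere, let $V$, $E$, $F$ be its numbers of vertices, edges and triangles. Euler's formula gives $V - E + F = \chi(\Dl)$, equal to $1$ for a disk and $2$ for a sphere. In the disk case, split $V = V_{\inn} + V_{\bord}$ into interior and boundary vertices. The boundary of the disk is an embedded combinatorial circle, so it has exactly $V_{\bord}$ edges.

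\textbf{Step 1 (angle count).} Every triangle of $\Cl_3$ is metrized with angles $\frac{\pi}6,\frac{\pi}2,\frac{\pi}3$ at its vertices of type $1,2,3$. Since these add up to $\pi$, and $d(v)\theta(v)$ is exactly the total angle of $\Dl$ at $v$,
\[
\sum_{v\in\Dl} d(v)\theta(v) = \pi F .
\]
Hence $\sum_v \pi K(v) = 2\pi V_{\inn} + \pi V_{\bord} - \pi F$ for a disk, and $2\pi V - \pi F$ for a sphere.

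\textbf{Step 2 (edge count).} Each triangle has three edges. An interior edge lies in exactly two triangles of $\Dl$ and a boundary edge in exactly one, because $\Dl$ is a surface. So $3F = 2E - V_{\bord}$ for a disk and $3F = 2E$ for a sphere. Eliminating $E$ from Euler's formula gives
\[
2\chi(\Dl) = 2V_{\inn} + V_{\bord} - F
\]
for a disk, and $2\chi(\Dl) = 2V - F$ for a sphere. Comparing with Step 1 yields $\sum_v \pi K(v) = 2\pi\,\chi(\Dl)$, which is the combinatorial Gauss--Bonnet identity: total angle defect equals $2\pi\chi$.

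\textbf{Step 3 (normalization).} Writing this as $\sum_v K(v) = \chi(\Dl)$, i.e.\ $1$ for a disk and $2$ for a sphere as stated, requires the defect to be measured in units of $2\pi$. That is, the displayed formula for $K(v)$ must be read with $2\pi K(v)$ on the left. I expect this to be the only delicate point, and I cannot resolve it in favour of the formula exactly as printed. With the left-hand side literally $\pi K(v)$, the same count gives $\sum_v K(v) = 2\chi(\Dl)$, that is $2$ for a disk and $4$ for a sphere. A single triangle already shows this: its curvature values are $\frac56+\frac12+\frac23 = 2$, not $1$.

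So I would prove the identity in the form $\sum_v(\text{angle defect at } v) = 2\pi\chi(\Dl)$, and state explicitly that the conclusion $\sum_v K(v) = \chi(\Dl)$ holds with $K$ normalized by $2\pi$. Nothing else changes. In the later applications only the sign of $K(v)$ is used: $K(v)\le 0$ at interior vertices and positive boundary contribution. For those applications the factor in the normalization is irrelevant.
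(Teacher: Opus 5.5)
Your proof is correct and is the standard argument; the paper states this proposition without proof. Each triangle carries total angle $\pi$, and combining $3F = 2E - V_{\bord}$ (resp.\ $3F = 2E$) with Euler's formula gives total angle defect $2\pi\chi(\Dl)$. Your diagnosis of the normalization is also right. With $\pi K(v)$ as printed, the sum is $2\chi(\Dl)$, as your single-triangle check shows. The definition must therefore be read as $2\pi K(v) = \pi - d(v)\theta(v)$, resp.\ $2\pi - d(v)\theta(v)$.

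The one thing to correct is your closing claim that this factor is irrelevant for the later applications. The constants used afterwards are calibrated to the $2\pi$ reading and fail under the literal one.

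For instance, take the disk of six triangles around $[x_3]$ from the example in \parref{sec:Cn}. The total angle at $[x_3]$ is $\pi$. Each of the three edges of type $(1,2)$ at $[x_3]$ is oriented towards $[x_3]$: in suitable coordinates, the two triangles along it differ by $y_1 \mapsto y_1 + Q(y_3)$. This gives $\tilde K([x_3]) = \frac12 - \frac36 = 0$ with your normalization. With the printed one it gives $1 - \frac36 = \frac12 > 0$, contradicting \cref{p:corrected_K}.

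Likewise, under the literal formula the type-$1$ vertex of a single triangle would get $\tilde K = \frac56$, violating the bound $\frac12$ in \cref{l:embedded_disk}. So the $2\pi$ normalization is forced by the rest of the section, not just a harmless convention.
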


Now we explain how to define a corrected curvature, by adding a correcting term to the formula of $K(v)$, when two triangles of $\Dl$ meet along an edge of type $(1,2)$.

First we associate a tree to any vertex of type $2$.
We require the construction to be $\Tame(\A^3)$-equivariant, so it is sufficient to consider the vertex $v_2 = [x_1,x_2]$.
Then we consider all vertices of $\Cl_3$ of the form $[f_1(x_1,x_2)]$ or $[f_1(x_1,x_2),f_2(x_1,x_2)]$, where $(f_1, f_2) \in \Aut(\A^2)$.
These vertices of type $1$ and $2$ form a tree $\Tl(v_2)$, which is the Bass--Serre tree defined in \parref{sec:dim2}.
Observe that all vertices of $\Tl(v_2)$ are at distance $2$ from $[x_3]$, in particular the inclusion $\Tl(v_2) \subset \Cl_3$ is not an isometric embedding.

We also need the following lemma

\begin{lemma}
\label{l:orientation}
Two triangles in $\Cl$ adjacent along an edge of type $(1,2)$ can be sent by an element of $\Tame(\A^3)$ to $[x_3], [x_2,x_3], [x_1, x_2, x_3]$ and $[x_3], [x_2,x_3], [x_1 + P(x_2,x_3), x_2, x_3]$, where $P$ has degree at least $2$.
\end{lemma}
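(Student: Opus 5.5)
The argument uses transitivity of $\Tame(\A^3)$ on edges of type $(1,2)$, then a normal form for type $3$ neighbours of $[x_2,x_3]$. Two triangles adjacent along an edge of type $(1,2)$ share vertices $u_1$ of type $1$ and $u_2$ of type $2$, and have distinct third vertices $w_3 \neq w_3'$ of type $3$.

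\emph{Step 1: normalize the common edge.} The action of $\Tame(\A^3)$ is transitive on edges of type $(1,2)$. So after applying an element of the group we may assume $u_1 = [x_3]$ and $u_2 = [x_2,x_3]$.

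\emph{Step 2: normal form for the type $3$ vertices.} Let $[f_1,f_2,f_3]$ be adjacent to both $[x_3]$ and $[x_2,x_3]$ in a common triangle. Then there is a representative $f = (f_1,f_2,f_3)\in\Tame(\A^3)$ with $[f_3]=[x_3]$ and $[f_2,f_3]=[x_2,x_3]$. Post-composing with affine maps, we may take $f_3 = x_3$ and $f_2 = x_2$, using the affine group acting on pairs. Since $(f_1,x_2,x_3)$ is an automorphism, the Jacobian condition gives $\partial f_1/\partial x_1 \in \k^*$. Hence $f_1 = a x_1 + P(x_2,x_3)$; concretely, the inverse restricted to the fibers over $(x_2,x_3)$ is an automorphism of $\A^1_{\k[x_2,x_3]}$. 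Rescaling by an affine map, $f_1 = x_1 + P(x_2,x_3)$. Moreover the vertex $[x_1+P,x_2,x_3]$ depends on $P$ only modulo affine expressions $b x_2 + c x_3 + d$, because these are absorbed by $A_3$.

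\emph{Step 3: move one vertex to the identity.} The stabilizer of the edge $[x_3]-[x_2,x_3]$ contains all maps $(x_1+Q(x_2,x_3),x_2,x_3)$. Acting by the inverse of the map associated with $w_3$, we send $w_3$ to $[x_1,x_2,x_3]$ while fixing the common edge. Then $w_3'$ becomes $[x_1+P(x_2,x_3),x_2,x_3]$ for some $P$. Since $w_3'\neq w_3$, the polynomial $P$ is not affine in $(x_2,x_3)$. After removing its affine part, $P$ has degree at least $2$.

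The step needing the most care is Step 2, namely showing that every common type $3$ neighbour has an elementary representative. I expect it follows from the Jacobian computation above, or equivalently from the fact that an automorphism of $\A^1$ over the ring $\k[x_2,x_3]$ is affine in $x_1$. The key point is that the representatives of $[x_2,x_3]$ can be taken to be exactly $(x_2,x_3)$, modulo $A_2$.
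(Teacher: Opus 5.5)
Your argument is correct. You use transitivity on $(1,2)$-edges, then show that every type-$3$ vertex in a triangle with the edge $[x_3]-[x_2,x_3]$ has the form $[x_1+P(x_2,x_3),x_2,x_3]$, and finally apply an elementary map in the edge stabilizer to move one third vertex to $[x_1,x_2,x_3]$. The survey states this lemma without proof, and your argument is the standard verification it implicitly relies on.

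Two minor remarks. The Jacobian step needs characteristic zero, which is the standing assumption in that section; your fiberwise argument over $\k[x_2,x_3]$ works in any characteristic. Also, a non-affine $P$ already has degree at least $2$, so removing its affine part is only cosmetic.
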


Now consider again a subcomplex $\Dl \subset \Cl_3$ homeomorphic to a disk or a sphere.
Let $T, T'$ be two triangles of $\Dl$ meeting along an edge $e$ of type $(1,2)$.
By \cref{l:orientation}, up to the action of an element $\phi \in \Tame(\A^3)$ we can assume that $T = [x_3], [x_2,x_3], [x_1, x_2, x_3]$ and $T' = [x_3], [x_2,x_3], [x_1 + P(x_2,x_3), x_2, x_3]$, where $P \in \k[x_2,x_3]$.
If $P$ depends only on one variable, meaning we can write $P(x_2,x_3) = A(f(x_2,x_3))$ with $f(x_2,x_3)$ a component of an automorphism in two variables, then we say that the edge $e$ is oriented.
Precisely, we write
\[
P(x_2,x_3) = c(a x_2 + bx_3)^d + \text{ lower order terms},
\]
and we put an arrow between $[x_3]$ and $[x_2,x_3]$ in the direction of the vertex $[a x_2 + bx_3]$ in the tree $\Tl([x_2,x_3])$.
The orientation on the initial edge $e$ is obtained by pulling back this orientation by the automorphism $\phi$.

Then given a vertex $v \in \Dl$ (of type $1$ or $2$), we denote by $\inn(v)$ (resp. $\out(v)$) the number of oriented edges to $v$ (resp. from $v$).
The corrected curvature $\tilde K(v)$ of $\Dl$ at $v$ is defined by
\[
\tilde K(v) = K(v) + \frac{\out(v) - \inn(v)}6.
\]
Since each oriented edge contributes by one to each sum $\sum_{v\in \Dl} \inn(v)$ and $\sum_{v\in \Dl} \out(v)$, the combinatorial Gauss--Bonnet formula from \cref{p:gaussbonnet} remains true if we replace $K(v)$ by $\tilde K(v)$.
The advantage of the change comes from the following fact:

\begin{proposition}
\label{p:corrected_K}
For each interior vertex $v \in \Dl$, $\tilde K(v) \le 0$.
\end{proposition}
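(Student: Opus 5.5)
We argue vertex by vertex, according to type, using the link of $v$: the triangles of $\Dl$ around an interior vertex $v$ form an embedded cycle in $\Ll(v)$ of length $d(v)$. So in each case we need a lower bound on the length of such cycles, adjusted by $\out(v)-\inn(v)$.

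\emph{Type 3.} Here $\theta(v)=\frac{\pi}{3}$ and no edge at $v$ has type $(1,2)$, so $\tilde K(v) = K(v)$. The link is the incidence graph of $\P^2_\k$, which has girth $6$. Hence $d(v)\ge 6$ and $\pi K(v) = 2\pi - d(v)\frac{\pi}{3} \le 0$.

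\emph{Type 2.} Here $\theta(v)=\frac{\pi}{2}$ and the link is complete bipartite, so an embedded cycle has length $d(v)\ge 4$, giving $K(v)\le 0$. The oriented edges at $v$ are exactly the edges of $\Dl$ of type $(1,2)$ at $v$. If $d(v)=4$, there are two such edges, corresponding to two type-1 vertices $[f],[f']$ of the link. We must check that these two edges cannot both point away from $v$, for otherwise $\tilde K(v)=\frac{2}{6}>0$. Normalizing $v=[x_2,x_3]$, each oriented edge points toward a vertex of the tree $\Tl(v)$ determined by the top linear form of the corresponding $P$. The plan is to show, using the explicit triangles from \cref{l:orientation}, that an edge pointing away from $v$ forces a relation incompatible with the four-cycle closing up in the complete bipartite link. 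For larger $d(v)$ there is slack: $d(v)=6$ gives $\pi K = -\pi$, which absorbs a correction of up to $+\frac{3}{6}$ since $\pi\cdot\frac{3}{6}=\frac{\pi}{2}$. In general each degree step beyond $4$ contributes $-\frac{1}{2}$, while each type-1 neighbor contributes at most $\frac{1}{6}$, so only small $d(v)$ needs a detailed check.

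\emph{Type 1.} This is the main case. Here $\theta(v)=\frac{\pi}{6}$, so we need $d(v)\ge 12 + \out(v)-\inn(v)$. Project the cycle in $\Ll(v)$ by $\pi$ to the tree $\Tl$ of \cref{l:link_type1_C3}. Since $\Tl$ is a tree, the projected closed path must backtrack. The idea is:
\begin{itemize}
\item Backtracking in $\Tl$ at a type-1 vertex of $\Ll(v)$ occurs precisely when two consecutive triangles share an edge of type $(1,2)$ and differ by a nonlinear $P$. This is where orientations live, so each such edge is oriented or not according to whether the corresponding step in $\Tl$ moves toward or away from the base.
\item A non-backtracking portion of a closed path in a tree must be traversed back. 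Counting type-2 vertices of $\Tl$, which correspond to the $6$ triangles around them in the standard disk of the example, the minimal cycles are the $6$-disk and the $8$-disk, together with detours.
\end{itemize}
The hard step is this bookkeeping: each edge oriented away from $v$ (an ``out'' edge) must be paid for by extra triangles in the cycle. The plan is to show that every out-edge at $v$ corresponds to an excursion in the fiber direction $t(x_3)$ that lengthens the cycle by at least one triangle beyond the minimal $12$. This would use part (2) of \cref{l:link_type1_C3} and the degree comparison of $P(x_2,x_3)=c(ax_2+bx_3)^d+\dots$ with the direction in $\Tl_{\k(x_3)}$. I expect the verification that the leading form dictates the direction, uniformly in all configurations, to be the main obstacle. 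I would attempt it by reducing, with the $\Tame(\A^3)$-action, to the case where consecutive type-2 vertices are $[x_2,x_3]$ and $[x_1+t(x_3),x_3]$, and computing explicitly.
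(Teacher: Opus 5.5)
Your type 3 case is fine, but the type 1 case, which carries all the content, aims at a false inequality and misdescribes the mechanism.

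\textbf{Normalisation and the correct target.} As printed, $\pi K(v) = 2\pi - d(v)\theta(v)$ is off by a factor of two. It is incompatible with \cref{p:gaussbonnet} (total $1$ for a disk) and with \cref{l:embedded_disk}. Both require $2\pi K(v) = 2\pi - d(v)\theta(v)$, resp.\ $\pi - d(v)\theta(v)$ at boundary vertices.

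Read literally, the proposition is false. Take the disk of six triangles around $v=[x_3]$ from the example in \parref{sec:Cn}. Its three edges of type $(1,2)$ all separate triangles differing by $P = Q(x_3)$, so all point into $v$, and your target $d(v) \ge 12 + \out(v) - \inn(v)$ reads $6 \ge 9$. Likewise, the eight-triangle disk with $P = x_2^2$, $Q = x_3^2$ has three in-edges and one out-edge, giving $8 \ge 10$.

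With the correct normalisation the target is $d(v) \ge 12 + 2(\out(v)-\inn(v))$. Equivalently, $\sum_e w(e) \ge 6$ over the $d(v)/2$ edges of type $(1,2)$ at $v$, where $w = 2,1,0$ for in, unoriented, out edges. Both examples above give equality. So there is no ``minimal $12$'' to exceed; you yourself list cycles of length $6$ and $8$. Short cycles in $\Ll(v)$ exist and are paid for by \emph{in}-edges. Your plan only charges out-edges and never explains why a short cycle must carry in-edges.

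\textbf{Orientation versus the tree.} Normalise the edge as $[x_3]-[x_2,x_3]$, with adjacent triangles through $[x_1,x_2,x_3]$ and $[x_1+P,x_2,x_3]$. Here $P$ is always nonaffine. The projection to $\Tl_{\k(x_3)}$ backtracks there exactly when $\deg_{x_2}P \le 1$, i.e.\ $P = p(x_3)x_2+q(x_3)$. Such an edge is oriented only if $p$ is constant, and then it points into $v$.

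Non-backtracking edges may be out ($P = x_2^2$), unoriented ($P = x_2^2+x_3^5$), or in ($P=(x_2+x_3^2)^2$, leading form $x_3^4$). So orientation has nothing to do with moving ``toward or away from a base'' of $\Tl$.

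The missing core is a counting lemma for the image of the cycle in $\Tl$. That image never backtracks at type-2 vertices of $\Tl$, because distinct type-2 neighbours of a type-3 vertex of $\Ll(v)$ have distinct images. The lemma must show that every embedded cycle in $\Ll(v)$ has weight at least $6$; for instance, a $6$-cycle must consist of three in-edges. \cref{l:link_type1_C3} alone does not give this, and your text offers only a hope here, not an argument.

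\textbf{Type 2 with $d(v)=4$.} It is not enough to exclude two out-edges: one out-edge next to an unoriented one already gives $\tilde K(v)>0$. The right observation is that both edges of type $(1,2)$ at $v$ separate the same two type-3 vertices, so they are governed by the same $P$. Hence either both are unoriented, or the arrow points towards the single vertex $[ax_2+bx_3]$ of $\Tl(v)$. In the latter case at most one edge is out, and the other is then in.
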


It is important to realize that the correcting terms in the definition of $\tilde K(v)$ depend on the ambient $\Dl$ and not only on the vertex $v$. In particular, \cref{p:corrected_K} does not mean that we have a globally defined metric on $\Cl_3$ with nonpositive curvature.
However, disks and spheres in $\Cl_3$ behave as if there was such a metric.
In particular, since all vertices of a combinatorial sphere are interior, \cref{p:corrected_K} together with the combinatorial Gauss--Bonnet formula implies that there is no embedded sphere in $\Cl_3$.
This implies that $\Cl_3$ is contractible, which is the first part of \cref{t:contractibleandgromov}.

Now we turn to the study of embedded disks.

\begin{lemma}
\label{l:embedded_disk}
Let $\Dl \subset \Cl_3$ be an embedded disk, and denote by $N$ the set of interior vertices of $\Dl$ with strictly negative corrected curvature.
Then:
\begin{enumerate}
\item For each $v \in N$, $\tilde K(v) \le -1/6$.
\item For each boundary vertex $v \in \bord \Dl$, $\tilde K(v) \le 1/2$.
\item Each vertex $v \in \Dl$ is at distance at most $10$ from $\bord D \cup N$.
\end{enumerate}
\end{lemma}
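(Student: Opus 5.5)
The approach is to make all three items local computations of $\tilde K$ from the triangle counts $d(v)$ and the number of oriented edges at $v$, with the angles fixed as $\frac{\pi}{6}, \frac{\pi}{2}, \frac{\pi}{3}$. The key observation is that every value involved lies in $\frac16\Z$. Indeed $K(v)$ is $1 - d(v)/6$, $1-d(v)/2$ or $1-d(v)/3$ at boundary vertices, and $2 - d(v)\theta(v)/\pi$ at interior ones. The correction $(\out(v)-\inn(v))/6$ is also a multiple of $1/6$. So $\tilde K(v) \in \frac16\Z$ for every $v$, and item (1) follows at once from \cref{p:corrected_K}: a strictly negative element of $\frac16\Z$ is at most $-1/6$.

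For item (2), take a boundary vertex $v$. Since $\Dl$ is an embedded disk, $d(v)\ge 1$, and $K(v)\le 1 - \theta(v)/\pi$, which is $5/6$, $1/2$ or $2/3$ according to the type of $v$. Boundary vertices of type $2$ and $3$ need more triangles to keep $\tilde K(v)\le 1/2$. I will use the local structure of $\Cl_3$: two triangles meeting at a type-$3$ vertex along an edge of type $(2,3)$ or $(1,3)$, and similarly at type-$1$ vertices, constrain the corner. I will also use that oriented edges only touch vertices of type $1$ and $2$. The sign of the correction at $v$ must be controlled: the correction is positive when $\out(v)>\inn(v)$. Here I would follow \cite{LamyPrzytycki2019} and use the choice of arrow direction from \cref{l:orientation} to show that a boundary vertex has at most a bounded number of outgoing oriented edges inside $\Dl$. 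I would then do a short case check for $d(v)=1,2,3$ per type. For $d(v)=1$ at a type-$1$ vertex there are no interior edges at $v$, so no correction, but then $K=5/6$. This shows that the statement must rely on embedded disks never having such corners, or that the bound needs reinterpretation via passing to a reduced diagram; I expect to need exactly this kind of local finesse.

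For item (3), argue by contradiction. Suppose some $v$ is at distance more than $10$ from $\bord\Dl\cup N$. Then the ball $B$ of radius $10$ around $v$ in $\Dl$ consists of interior vertices with $\tilde K = 0$, i.e.\ locally flat vertices. The plan is to show that a flat region cannot be that large. Flatness at a type-$1$ vertex means $d(v)=12$ corrected by orientations, at type $2$ it means $d(v)=4$, and at type $3$ it means $d(v)=6$. Using \cref{l:link_type1_C3}, a type-$1$ vertex with $12$ triangles and balanced orientation corresponds to a loop in $\Ll(v)$ projecting to the tree $\Tl$, which forces a specific pattern of oriented edges. Following the arrows, namely out of a flat type-$1$ or type-$2$ vertex along an edge whose orientation is forced, gives a directed path in $\Dl$. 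Along this path the degree of the polynomial $P$ from \cref{l:orientation} (or a suitable tree distance in $\Tl([x_2,x_3])$) strictly increases. Within a flat region this path cannot close up and must keep going, but it cannot exceed the radius without leaving $B$. The hard point is to extract a quantitative bound (here $10$), and this is the step I expect to be the main obstacle. I would try to show that any flat vertex is within distance $2$ of an oriented edge. I would then show that following arrows changes the configuration in a monotone way that is incompatible with more than a few consecutive flat vertices, giving a contradiction within distance $10$.
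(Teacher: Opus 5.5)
Your argument for (1) is essentially right, but there are genuine gaps in (2) and (3).

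\textbf{Item (2) and the normalization.} The corner you found, a type-$1$ vertex lying in a single triangle with $K=5/6$, is not something embedded disks avoid: a single triangle is itself an embedded disk. What it shows is that the curvature formula has to be read as $2\pi K(v)=\pi-d(v)\theta(v)$ (resp.\ $2\pi-d(v)\theta(v)$). This is the only normalization compatible with \cref{p:gaussbonnet}: with $\pi K$, a single triangle gives $\frac56+\frac12+\frac23=2$. It is also the only one compatible with \cref{p:corrected_K}: with $\pi K$, the six-triangle disk around $[x_3]$, whose three $(1,2)$-edges all point into $[x_3]$, would have $\tilde K([x_3])=\frac12$. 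With the correct normalization, (2) is a direct count, and no appeal to \cref{l:orientation} or case check on $d$ is needed. Suppose $v\in\bord\Dl$ lies in $d$ triangles. Then $K(v)=\frac12-\frac{d\,\theta(v)}{2\pi}\le\frac12-\frac{d}{12}$. The edge types at $v$ alternate around the fan, so at most $\lfloor d/2\rfloor$ of the $d-1$ interior edges at $v$ have type $(1,2)$, hence $\out(v)/6\le d/12$. In this normalization your integrality argument for (1) needs one more observation. At an interior type-$1$ vertex $K(v)=1-d(v)/12$, which lies in $\frac16\Z$ only because $d(v)$ is even; this holds since every link is bipartite.

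\textbf{Item (3).} You do not give an argument, and the proposed mechanism does not work as stated.
\begin{enumerate}
\item Your description of zero-curvature vertices is inaccurate once the corrections are included. At an interior type-$1$ vertex, $\tilde K(v)=0$ means $d(v)=12+2(\out(v)-\inn(v))$. At a type-$2$ vertex it also allows $d(v)=6$ with all three $(1,2)$-edges outgoing. So the region where $\tilde K=0$ need not be a piece of the Euclidean $(2,3,6)$ tiling.
\item Following arrows yields no contradiction. A directed path is perfectly allowed to leave the ball of radius $10$.
\item Nothing excludes zero-curvature vertices carrying no oriented edge at all (for instance $d(v)=12$ with $\inn(v)=\out(v)=0$), from which no path starts. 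Your hoped-for ``within distance $2$ of an oriented edge'' is left unproved.
\item The claimed monotone quantity is not available. Each arrow is defined only after moving its two triangles to normal form by its own element $\phi$. It records only the direction of the leading form of $P$ in $\Tl([x_2,x_3])$, not a degree that can be compared from one edge to the next.
\end{enumerate}
What is missing is a rigidity statement for configurations of zero corrected curvature, showing they cannot fill a disk of radius $10$ around an interior vertex. It has to be extracted from the structure of the links: \cref{l:link_type1_C3} for type $1$, complete bipartite links for type $2$, and the incidence graph of $\P^2$ for type $3$. This is the substantive part of the argument in \cite{LamyPrzytycki2019}, which the paper does not reproduce; your proposal names it as the main obstacle without supplying it.
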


This allows to apply \cref{p:criterionhyp}, and give the hyperbolicity of $\Cl_3$, which is the second part of \cref{t:contractibleandgromov}.

The proof of \cref{p:WPD_in_A3} also follows from the study of embedded disks.
From the definition of $f$, we get that $f$ sends the vertex $[x_1]$ on $[x_3]$. So by setting $\gamma(2k) = f^{k}([x_1])$ and $\gamma(2k+1) = f^{k} ([x_1, x_3])$, we get a combinatorial path in $\Cl_3$.
To prove that $f$ is loxodromic, it is sufficient to show that there exists a constant $\lambda >0$ such that $d(\gamma(s), \gamma(t)) \ge \lambda |s-t|$ for all integers $s,t$. It turns out that the constant $\lambda = \frac16$ works.
Assume by contradiction that $d(\gamma(s), \gamma(t)) < \frac16  |s-t|$.
Then assuming $|s-t|$ minimal for this property, and closing the path from $\gamma(s)$ to $\gamma(t)$ by a geodesic path, we get a loop bounding an embedded disk.
The corrected curvature at each interior vertex is nonpositive, and by the choice of $f$ the sum of the boundary curvature along the long path $\gamma$ is very negative, and cannot be cancelled out by the short geodesic segment. This contradicts the fact that the local corrected curvature contributions should add up to $2$, see \cref{p:gaussbonnet}.

Similarly, a key lemma to show that $f$ satisfies the WPD property is that given any constant $C >0$, if $t, s$ are integers sufficiently far away from each other ($|t - s| > 12 C$ is the precise condition), then for any $v, v'$ at distance at most $C$ from $\gamma(s)$ and $\gamma(t)$ respectively, any geodesic from $v$ to $v'$ meet $\gamma$.
Again the proof is by contradiction: if not, by joining $\gamma(s)$, $v$, $v'$ and $\gamma(t)$ by three geodesic paths, and using $\gamma$ as the fourth side, we obtain an embedded disk that contradicts Gauss--Bonnet formula. See \cite[Lemma 7.9]{LamyPrzytycki2019} for details.

\section{Interlude: orthogonal tame group in dimension 4}
\label{sec:autqA4}

In this section, we consider an orthogonal tame group in dimension 4, which complexity wise sits somewhere between the cases of $\Tame(\A^2)$ and $\Tame(\A^3)$.
In particular, we shall see that the natural variant of the complex $\Cl_n$ adapted to this context gives a space which is Gromov hyperbolic, but also has the $\CAT(0)$ property.

\subsection{Definition}

We work over an algebraically closed field $\k$ of characteristic zero, with the quadratic form
\[
q(x_1, x_2, x_2, x_4) = x_1x_4 - x_2 x_3,
\]
and we consider the group
\[
\Aut_q(\A^4) = \{ f \in \Aut(\A^4) \mid q \circ f = q\}.
\]
We identify $\A^4$ with the space of $2 \times 2$ matrices, so that $q$ corresponds to the determinant, and we write $f = \smallmat{f_1 & f_2 \\ f_3 & f_4}$ for an element of $\Aut_q(\A^4)$.
Observe that given three among the components $f_i$, the fourth one is uniquely determined by the condition
\[
f_1 f_4 - f_2 f_3 = x_1x_4 - x_2 x_3.
\]

The group $\Aut_q(\A^4)$ contains the linear orthogonal group $\O_4(q) = \Z/2 \ltimes \SO_4(q)$, where the $\Z/2$ factor corresponds to the transpose automorphism.
Moreover, the group $\SO_4(q)$ can be explicitly described via the 2 to 1 cover from $\SL_2(\k) \times \SL_2(\k)$ given by
\[
\begin{tikzcd}[map]
\SL_2(\k) \times \SL_2(\k) & \to & \SO_4(q) \\
(A, B) & \mapsto & {A \cdot \mat{x_1 & x_2 \\ x_3 & x_4} \cdot B^t}.
\end{tikzcd}
\]
We denote by $V_4$ the following subgroup of $\O_4(q)$:
\[
V_4 = \left\{
\id,
\mat{x_4 &x_2 \\ x_3 & x_1},
\mat{x_1 &x_3 \\ x_2 & x_4},
\mat{x_4 &x_3 \\ x_2 & x_1}
\right\} \simeq \Z/2 \times \Z/2.
\]
The group $\Aut_q(\A^4)$ also contains the subgroup of elementary automorphisms
\[
E = \left\{ \mat{x_1 + x_2 P(x_2, x_4) & x_2 \\ x_3 + x_4 P(x_2, x_4) & x_4} \mid P \in \k[x_2, x_4]\right\}.
\]
We define the tame subgroup of $\Aut_q(\A^4)$ as the group generated by linear and elementary automorphisms:
\[
\Tame_q(\A^4) = \langle \O_4(q), E \rangle.
\]

Each element of $\Aut_q(\A^4)$ restricts as an automorphism on each level set of the quadratic form $q$, and in particular to an automorphism of
\[
\SL_2 = \left\{ \mat{x_1 & x_2 \\ x_3 & x_4} \mid x_1x_4 - x_2x_3 = 1\right\}.
\]
Let $\Aut(\SL_2)$ denote the automorphism group of $\SL_2$ as an affine variety (and not as an algebraic group), and let $\Tame(\SL_2)$ be the image of $\Tame_q(\A^4)$ under the natural restriction map
\[
\rho\colon \Aut_q(\A^4) \to \Aut(\SL_2).
\]
It is unclear whether the homomorphism $\rho$ is surjective.
On the other hand, it is easy to see that $\rho$ is not injective:

\begin{example}
Let
\[
f = \mat{x_1 + x_2 (q-1) & x_2 \\ x_3 + x_4 (q-1) & x_4},
\]
where as above $q = x_1x_4-x_2x_3$.
Then $f$ is an element in $\Aut_q(\A^4)$ that restricts as the identity on $\SL_2$.
\end{example}

It turns out that the restriction of $\rho$ induces an isomorphism between $\Tame_q(\A^4)$ and $\Tame(\SL_2)$, see \cite[Proposition 4.19]{BisiFurterLamy}.
In this text we stick to the $\Tame_q(\A^4)$ point of view, even if in the main source \cite{BisiFurterLamy} it was the $\Tame(\SL_2)$ point of view which was used.

\subsection{The square complex}

We adapt the construction of the simplicial complex to get a square complex $\Cl$ on which acts the group $\Tame_q(\A^4)$.

Given an automorphism $f = \smallmat{f_1 & f_2 \\ f_3 &f_4} \in \Tame_q(\A^4)$, we define three classes:
\begin{align*}
[f_1] &= \k^* \cdot f_1 = \{af_1 \mid a \in \k^*\}, \\
[f_1, f_2] &= \GL_2(\k) \cdot (f_1, f_2) = \left\{ (af_1 + bf_2, cf_1 + df_2 \mid \smallmat{a&b\\c&d} \in \GL_2(\k) \right\},\\
\smallpmat{f_1 & f_2 \\ f_3 & f_4} &= \O_4(q) \cdot \smallmat{f_1 & f_2 \\ f_3 &f_4}.
\end{align*}
The set of vertices of the complex is the set of such classes, when $f$ runs over all elements in $\Tame_q(\A^4)$.
We say that vertices of the form $[f_1]$ (resp. $[f_1, f_2]$, resp. $\smallpmat{f_1 & f_2 \\ f_3 & f_4}$) are of type 1 (resp. type 2, resp. type 3).
By applying this definition to $\sigma \circ f$ where $\sigma \in V_4$, we see that $[f_2]$, $[f_3]$ and $[f_4]$ are also vertices of type $1$, and similarly $[f_1, f_3]$, $[f_3, f_4]$ and $[f_2, f_4]$ are vertices of type $2$.

Then we define two types of edges:
\begin{itemize}
\item
Edges between a vertex $[f_1]$ of type $1$ and a vertex $[f_1,f_2]$ of type $2$,
\item
Edges between a vertex $[f_1, f_2]$ and a vertex $\smallpmat{f_1 & f_2 \\ f_3 & f_4}$ of type $3$.
\end{itemize}

Finally, we glue a square on each loop of four edges associated with the four vertices $[f_1]$, $[f_1, f_2]$, $[f_1, f_3]$ and $\smallpmat{f_1 & f_2 \\ f_3 & f_4}$ from a given $\smallmat{f_1 & f_2 \\ f_3 & f_4} \in \Tame_q(\A^4)$.
Applying this definition to each $\sigma \circ f$ with $\sigma \in V_4$, we get four squares forming what we call the big square associated to $f$. See \cref{f:square}.

\begin{figure}
\centering
\includegraphics{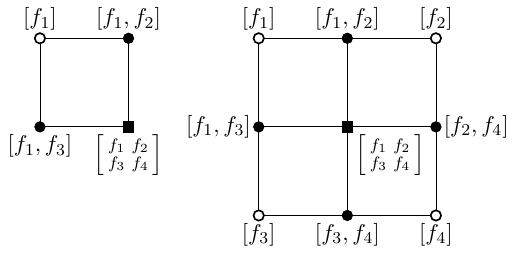}
\caption{Square and big square associated to $f \in \Tame_q(\A^4)$.}
\label{f:square}
\end{figure}

We endow the square complex $\Cl$ with the natural metric obtained by identifying each square with a Euclidean square with edges of length $1$.
The group $\Tame_q(\A^4)$ acts by isometries on $\Cl$, by the same formula as in \eqref{eq:action}.

\begin{figure}
\centering
\includegraphics{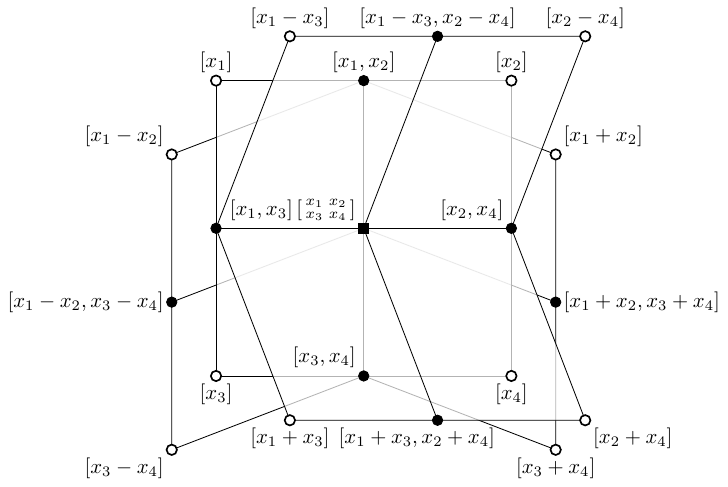}
\caption{More squares in $\Cl$.}
\label{f:more_squares}
\end{figure}

\subsection{The CAT(0) property}

Let $x, y, z \in X$ be three points in a geodesic metric space $X$.
The union $\Delta = [x,y] \cup [y,z] \cup [z,x]$ of three geodesic segments between these points is called a geodesic triangle.
We consider the Euclidean plane $\R^2$ with its standard metric, we call it the comparison space.
Given a triangle $\Delta \subset X$, we call comparison triangle a triangle $\bar \Delta \subset \R^2$ associated to a choice of three points $\bar x, \bar y, \bar z \in \R^2$ such that $d(\bar x, \bar y) = d(x,y)$, $d(\bar x, \bar z) = d(x,z)$, and $d(\bar y, \bar z) = d(y,z)$.
Such a comparison triangle $\bar \Delta$ is unique up to a Euclidean isometry.
Given $u \in \Delta$, there is a uniquely defined comparison point $\bar u \in \bar \Delta$.
Then we say that $X$ is a $\CAT(0)$ space if for any triangle $\Delta \subset X$, and any points $u,v \in \Delta$, we have $d_X(u,v) \le d_{\R^2}(\bar u, \bar v)$.
See \cref{f:CAT0}.
We say that $X$ is locally $\CAT(0)$ is for each point $x \in X$, there is $r_x >0$ such that the open ball $B(x,r_x)$ endowed with the induced metric is $\CAT(0)$.

\begin{figure}
\centering
\includegraphics{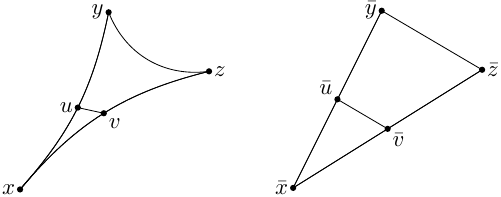}
\caption{The $\CAT(0)$ inequality.}
\label{f:CAT0}
\end{figure}

Let $X$ be a $\CAT(0)$ space, and assume moreover that $X$ is complete.
Given a bounded set $B \subset X$, the radius $r_0$ of $B$ is defined as the infimum of the positive numbers $r$ such that $B \subset B(x, r)$ for some $x \in X$.
Then there exists a unique point $x_0 \in X$, called the circumcenter of $B$, such that $B \subset B(x_0, r_0)$.
See \cite[II.2.7]{BridsonHaefliger}.

\begin{lemma}
\label{l:circumcenter}
Any finite group acting by isometries on a complete $\CAT(0)$ space fixes a point.
\end{lemma}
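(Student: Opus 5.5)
Let $G$ be a finite group acting by isometries on a complete $\CAT(0)$ space $X$. The plan is to produce a nonempty bounded set that is invariant under $G$, and then to show that its circumcenter, as recalled just before the statement, is a point fixed by $G$.

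First we pick any point $x \in X$ and consider its orbit $B = G \cdot x = \{ g(x) \mid g \in G\}$. Since $G$ is finite, $B$ is a finite set, hence bounded. It is also $G$-invariant: for every $h \in G$ we have $h(B) = B$, because $h$ simply permutes the elements of the orbit.

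Next we let $r_0$ be the radius of $B$ and $x_0$ its circumcenter. Recall that $x_0$ is the unique point of $X$ with $B \subset B(x_0, r_0)$, where, as in \cite[II.2.7]{BridsonHaefliger}, this ball is understood as the closed ball, so that the infimum defining $r_0$ is attained.

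Now we fix $h \in G$. Since $h$ is an isometry, it maps $B(x_0, r_0)$ onto $B(h(x_0), r_0)$. Hence
\[
B = h(B) \subset h(B(x_0,r_0)) = B(h(x_0), r_0).
\]
The point $h(x_0)$ therefore also realizes the radius $r_0$. By uniqueness of the circumcenter, $h(x_0) = x_0$. As $h \in G$ was arbitrary, $x_0$ is a global fixed point of $G$.

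The only nontrivial ingredient is the existence and uniqueness of the circumcenter in a complete $\CAT(0)$ space, which is quoted from \cite{BridsonHaefliger}. If one wanted to prove it, uniqueness would come from the $\CAT(0)$ inequality applied to midpoints: if two distinct points $x_0 \neq x_1$ both realized $r_0$, then their midpoint would satisfy $d(m,b)^2 \le r_0^2 - \frac14 d(x_0,x_1)^2$ for every $b \in B$, contradicting the minimality of $r_0$. Existence would follow from the same estimate, which shows that almost-minimizing centers form a Cauchy sequence, together with completeness of $X$. Once that fact is granted, everything else is formal.
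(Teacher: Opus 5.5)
Your proof is correct and is the same argument as the paper's, which simply says to take the circumcenter of an arbitrary orbit; you have spelled out why uniqueness of the circumcenter (from \cite[II.2.7]{BridsonHaefliger}) forces it to be fixed by every element of the group. Your remark that the ball must be taken closed so that the radius is attained is also accurate.
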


\begin{proof}
Consider the circumcenter of an arbitrary orbit.
\end{proof}

The standard way to check whether a metric space is $\CAT(0)$ is by using the Cartan--Hadamard's Theorem:

\begin{theorem}
\label{t:CA}
\cite[II.4.1]{BridsonHaefliger}
Let $X$ be a geodesic metric space.
Then $X$ is $\CAT(0)$ if and only if the following two properties hold:
\begin{enumerate}
\item $X$ is locally $\CAT(0)$.
\item $X$ is simply connected.
\end{enumerate}
\end{theorem}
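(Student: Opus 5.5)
This is the Cartan--Hadamard theorem in the form of \cite[II.4.1]{BridsonHaefliger}, and the survey states it as a citation. I would not reprove it in full, but the argument I would follow goes like this.

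The direction ``$\CAT(0)$ implies (1) and (2)'' is the easy one. A $\CAT(0)$ space is trivially locally $\CAT(0)$: take $r_x = \infty$, or any ball, since balls in a $\CAT(0)$ space are convex. It is also contractible, hence simply connected. Indeed, geodesics are unique and vary continuously with their endpoints, by comparison with Euclidean triangles. So the geodesic contraction $(y,t) \mapsto$ the point at fraction $1-t$ along $[x_0,y]$ is a continuous homotopy from the identity to a constant map.

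For the converse, assume $X$ is locally $\CAT(0)$ and simply connected, which for this purpose should include completeness, as in Bridson--Haefliger. I would proceed in three steps.

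\begin{enumerate}
\item Fix a basepoint $x_0$. Consider the space of local geodesics starting at $x_0$, with the topology of uniform convergence. Use local convexity to show that local geodesics issuing from $x_0$ vary continuously with their endpoint, and that a local geodesic between two points in a small $\CAT(0)$ ball is unique.
\item Show that the endpoint map from this space to $X$ is a local homeomorphism with the path-lifting property, hence a covering map. This is the \emph{main obstacle}. One must show that every point is joined to $x_0$ by some local geodesic, and that local geodesics can be continued along paths. The key tool is a ``local-to-global'' comparison: the $\CAT(0)$ inequality holds for triangles made of local geodesics that are built from small triangles by gluing. This comes from Alexandrov's patchwork lemma together with the local estimate. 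I would first establish the patching lemma, namely that a triangle split into two triangles both satisfying the $\CAT(0)$ inequality satisfies it itself. I would then subdivide a homotopy between local geodesics into small squares. Completeness is used to extend local geodesics along paths via a limiting argument.
\item Since $X$ is simply connected and the endpoint map is a covering map, it is a homeomorphism. So each pair of points is joined by a unique local geodesic, and this local geodesic is therefore the geodesic. By the patching in step 2, geodesic triangles satisfy the $\CAT(0)$ inequality globally.
\end{enumerate}

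In the paper this will be applied to the square complex $\Cl$. There the local condition reduces, via Gromov's link condition for cube complexes, to checking that links have no cycles of length less than $4$, that is, that they are flag. Simple connectedness is the result that must be proved separately, by the analog of \cref{p:C3-1-connected}.
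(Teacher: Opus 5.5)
Your sketch is correct and follows the Bridson--Haefliger argument that the paper only cites without proof: the space of local geodesics from a basepoint, whose endpoint map is a covering by local convexity of the distance between nearby local geodesics plus completeness, simple connectivity making that covering a homeomorphism, and Alexandrov's patchwork giving the global $\CAT(0)$ inequality. You were also right to add completeness, which is missing from the statement as quoted but holds wherever the paper applies it: $\Cl$ is built from a single shape of Euclidean unit square, and $X_3$ is complete by \cref{p:metric}.
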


\subsection{Geometric properties of the complex}

We aim at proving that the square complex of $\Tame_q(\A^4)$ is $\CAT(0)$.
In view of Cartan--Hadamard's \cref{t:CA}, the first step is to prove this property locally.
Since our metric space $\Cl$ is a square complex, the only place where the local curvature might not be flat is around vertices.
For instance the square complex formed by gluing three squares as in \cref{f:notCAT0} is not $\CAT(0)$.

\begin{figure}[ht]
\centering
\includegraphics{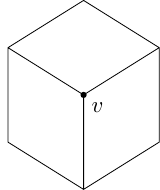}
\caption{Not locally $\CAT(0)$ at $v$.}
\label{f:notCAT0}
\end{figure}

Then the question becomes: is it true that any local loop around a vertex of $\Cl$ meets at least four squares?
The following two lemmas allows answering positively.

\begin{lemma}
\label{l:bipartite}
\cite[Propositions 3.6 \& 3.7]{BisiFurterLamy}
Let $v$ be a vertex of type $2$ or $3$ in $\Cl$.
Then the link of $v$ is a complete bipartite graph.
\end{lemma}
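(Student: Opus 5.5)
By the transitivity of the $\Tame_q(\A^4)$-action, it suffices to treat the standard vertices. For type $2$ we take $v = [x_1,x_2]$, and for type $3$ we take $v = \smallpmat{x_1 & x_2 \\ x_3 & x_4}$. In each case we identify the neighbours of $v$ and the squares through $v$, which give the edges of the link.

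\emph{Type $2$.} The vertex $v=[x_1,x_2]$ is adjacent to two kinds of vertices, and the link is naturally bipartite between them.
\begin{itemize}
\item Type $1$ neighbours $[f_1]$, with $f_1 = ax_1+bx_2$ a nonzero linear combination, up to scalar. These form a copy of $\P^1_\k$.
\item Type $3$ neighbours, namely the classes $\smallpmat{f_1 & f_2 \\ f_3 & f_4}$ with $[f_1,f_2]=[x_1,x_2]$.
\end{itemize}
A square through $v$ joins $[f_1]$ and a type $3$ vertex $w$ exactly when some representative of $w$ has first row $(f_1, f_2)$ with $[f_1,f_2]=v$. The key step is to show that any choice of first row works. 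Given $w$ with first row $(x_1,x_2)$ (after left multiplication by the $\SL_2\times\SL_2$ action) and any $f_1 = ax_1+bx_2$, we apply $B\in\SL_2(\k)$ on the right, via $M\mapsto MB^t$. This fixes the type $3$ class and replaces the first row by $(x_1,x_2)B^t$, and since $\SL_2$ acts transitively on nonzero row vectors up to scalar, we can make the first entry proportional to $f_1$. Hence every type $1$ neighbour is joined to every type $3$ neighbour, so the link is complete bipartite. One must also check that $w$ and $[f_1]$ are each joined only to vertices of the other kind in the link, which is clear from the types.

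\emph{Type $3$.} The vertex $v=\smallpmat{x_1&x_2\\x_3&x_4}$ is adjacent only to type $2$ vertices, namely classes $[f_1,f_2]$ where $(f_1,f_2)$ is a row or column of some $\O_4(q)$-image of the identity matrix. These are $2$-planes in $\k^4$ that are totally isotropic for $q$. The totally isotropic planes of a split quadric in dimension $4$ fall into two rulings:
\begin{itemize}
\item the row-type planes $\{\alpha\cdot M\}$, which correspond to "rows" after the $\SL_2$-action;
\item the column-type planes.
\end{itemize}
Two neighbours span a square through $v$ exactly when they share a type $1$ vertex, that is, when the two planes meet in a line. A row plane and a column plane always meet in a line, since the matrices in $\{u\cdot x^t\}$ with fixed $u$ and those with fixed $x$ share the rank one matrix $u x^t$. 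Two distinct planes of the same ruling meet only in $0$. By the explicit $\SL_2\times\SL_2$ parametrization, each of the two families is a copy of $\P^1_\k$, and the transpose in $\O_4(q)$ swaps them. The link of $v$ is therefore the complete bipartite graph $K_{\P^1,\P^1}$.

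\emph{Main obstacle.} The difficulty is the non-standard part of each link. For type $2$, one has to show that the type $3$ neighbours really are all linked to every $[f_1]$, which needs the right $\SL_2$-action to preserve the class. For type $3$, one has to show that the only neighbours are linear isotropic planes and that a square exists exactly when two planes intersect. This requires checking that the stabilizer of $v$ is exactly $\O_4(q)$, so that no nonlinear tame representative introduces extra neighbours. I would deduce this from the definition of the classes as $\O_4(q)$-orbits, together with the fact that two representatives of $v$ differ by an element of $\O_4(q)$.
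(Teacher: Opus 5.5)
Your proposal is correct and follows essentially the argument of the cited source. You reduce to the standard vertices, use the right $\SL_2(\k)$-action on the first row to join every $[ax_1+bx_2]$ to every type~$3$ neighbour of $[x_1,x_2]$, and describe the link of $\smallpmat{x_1 & x_2 \\ x_3 & x_4}$ through the two families $[ax_1+bx_3,ax_2+bx_4]$ and $[cx_1+dx_2,cx_3+dx_4]$.

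The one implication you assert without proof is that a row-type and a column-type vertex actually span a square, rather than merely sharing a type~$1$ neighbour. To fill it, take $A,B\in\SL_2(\k)$ with first rows $(a,b)$ and $(c,d)$: then $f=A\smallmat{x_1 & x_2 \\ x_3 & x_4}B^t\in\O_4(q)$ satisfies $[f_1,f_2]=[ax_1+bx_3,ax_2+bx_4]$ and $[f_1,f_3]=[cx_1+dx_2,cx_3+dx_4]$.
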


\begin{lemma}
\label{l:link_type1}
\cite[Lemma 3.5]{BisiFurterLamy}
Let $v$ be a vertex of type $1$ in $\Cl$, and denote by $\Gl$ its link.
Then the graph $\Gl$ has the following properties:
\begin{enumerate}
\item There exists an unbounded bicolored tree $\Tl$ and a surjective map of graphs $\pi \colon \Gl \to \Tl$.
\item For each edge $e \in \Tl$, the preimage $\pi^{-1}(e)$ is a complete bipartite graph.
\end{enumerate}
\end{lemma}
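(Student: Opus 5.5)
We take $v = [x_1]$, which is no loss of generality since $\Tame_q(\A^4)$ acts transitively on vertices of type $1$, and describe its link $\Gl$ explicitly, in parallel with \cref{l:link_type1_C3}. By definition of the edges, the neighbours of $[x_1]$ are vertices of type $2$ of the form $[x_1, f_2]$. Squares containing $[x_1]$ have opposite vertex of type $3$, so the edges of $\Gl$ join $[x_1,f_2]$ and $[x_1,f_3]$ whenever there is $f=\smallmat{x_1 & f_2\\ f_3 & f_4} \in \Tame_q(\A^4)$. The first step is to describe the possible second entries of such an $f$. Since $x_1f_4 - f_2f_3 = x_1x_4-x_2x_3$, reducing modulo $x_1$ gives $f_2f_3 \equiv x_2x_3$. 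This suggests working over the field $\k(x_1)$, or modulo $x_1$, where the pair $(f_2,f_3)$ behaves like a component pair of an automorphism of a plane with coordinates $x_2,x_3$, possibly twisted by $x_1$.

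The second step is to build the map $\pi$. We send $[x_1,f_2]$ to the class of $f_2$ viewed as a component of a two-variable automorphism over $\k(x_1)$, that is, to a type-$1$ vertex of a Bass--Serre tree as in \parref{sec:dim2}, or of a suitable subtree $\Tl$. We send the edge $[x_1,f_2]$--$[x_1,f_3]$ to the corresponding edge of that tree. To check that $\pi$ is well defined, we use the fact that $[x_1,f_2]$ is a $\GL_2$-orbit and that modifying $f_2$ by multiples of $x_1$ does not change the image. Surjectivity onto $\Tl$ then follows by taking $\Tl$ to be the image. Unboundedness of $\Tl$ comes from exhibiting elementary automorphisms, conjugated by elements of $V_4$ and $\O_4(q)$, that fix $x_1$ and act on $(x_2,x_3)$ with polynomials of arbitrarily large degree. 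An example is $\smallmat{x_1 & x_2 + x_1P(x_1,x_3)\\ x_3 & x_4 + x_3P(x_1,x_3)}$, obtained by transposing an element of $E$.

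The third step is to identify the fibres over an edge $e$ of $\Tl$. By equivariance under the stabilizer of $[x_1]$, we may assume $e$ is the standard edge. Then $\pi^{-1}(e)$ consists of the vertices $[x_1, x_2 + x_1 a]$ and $[x_1, x_3 + x_1 b]$ for suitable polynomials $a$ and $b$. We must show that every such pair is joined by an edge, by completing $\smallmat{x_1 & x_2+x_1a \\ x_3+x_1b & f_4}$ to a tame element. We do this explicitly as a composition of the two transposed elementary maps, one modifying $x_2$ and one modifying $x_3$, in the same way as in the $\Cl_3$ argument; since each modification is by a multiple of $x_1$, they do not interfere.

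The main obstacle is step one: proving that every tame $f$ with first entry $x_1$ has $(f_2,f_3)$ of the described form. That is, we must show that the stabilizer of $[x_1]$ is generated by the explicit maps above together with linear maps fixing $[x_1]$, so that $\pi$ really lands in a tree with the stated fibres. We would attempt this via a degree-reduction argument for $\Tame_q(\A^4)$, analogous to \cref{l:degree}, or via the identification with $\Tame(\SL_2)$ from \cite{BisiFurterLamy}.
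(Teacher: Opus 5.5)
\textbf{There is a genuine gap: the tree is wrong, and finding a better subtree will not fix it.} Your map $[x_1,f_2]\mapsto[f_2]\in\Tl_{\k(x_1)}$ sends every vertex of $\Gl$ to a type-$1$ vertex. It sends the two ends of an edge $[x_1,f_2]-[x_1,f_3]$ to vertices at distance $2$, through $[f_2,f_3]$. So it is not a map of graphs, and it sees no bicolouring.

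More seriously, no subtree of $\Tl_{\k(x_1)}$ can carry the structure asserted by the lemma. Both $[x_1,x_2+x_1x_3]$ and $[x_1,x_3+x_1x_2]$ are vertices of $\Gl$, realized by your own maps $\smallmat{x_1 & x_2+x_1x_3\\ x_3 & x_4+x_3^2}$ and $\smallmat{x_1 & x_2\\ x_3+x_1x_2 & x_4+x_2^2}$. Their images, like those of $[x_1,x_2]$ and $[x_1,x_3]$, are neighbours of $[x_2,x_3]$, because $(x_2+x_1x_3,\,x_3+x_1x_2)$ is an invertible $\k(x_1)$-linear map.

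Yet these two vertices are not adjacent in $\Gl$. Up to post-composition by the transpose and a diagonal element of $\O_4(q)$, a square containing both would come from some $g=\smallmat{x_1 & \delta(x_2+x_1x_3)+\gamma x_1\\ \delta'(x_3+x_1x_2)+\gamma' x_1 & g_4}$. Preserving $q$ forces $\delta\delta'=1$ and $g_4=x_4+h(x_1,x_2,x_3)$. The Jacobian of $g$ is then $1-x_1^2$, which is not constant, so $g$ is not an automorphism.

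The same example breaks Step 3.
\begin{itemize}
\item Composing the column map with $a(x_1,x_3)$ and the row map with $b(x_1,x_2)$ gives an entry such as $x_3+x_1b(x_1,x_2+x_1a(x_1,x_3))$. So the two modifications do interfere unless $a$ or $b$ lies in $\k[x_1]$.
\item Your reduction ``by equivariance to the standard edge'' silently assumes that pairs like $([x_2+x_1x_3],[x_3+x_1x_2])$ lie in the orbit of $([x_2],[x_3])$. They do not.
\end{itemize}

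What is missing is a tree built over $\k[x_1]$, not $\k(x_1)$. Your congruence $f_2f_3\equiv x_2x_3 \pmod{x_1}$, together with $\{f_1=f_2=0\}\neq\emptyset$, gives $f_2\equiv c\,x_2$ or $f_2\equiv c\,x_3 \pmod{x_1}$. That dichotomy is the bicolouring, and edges of $\Gl$ always join the two colours.

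The fibres of $\pi$ should be the classes of vertices with the same neighbourhood in $\Gl$, for instance $\{[x_1,x_2+x_1a(x_1)] : a\in\k[x_1]\}$, and $\Tl$ should be the quotient graph. Property (2) is then automatic. The real content is that this quotient is an unbounded tree. That is an amalgam-type statement for the stabilizer of $[x_1]$ with respect to the row maps $x_3\mapsto x_3+x_1Q(x_1,x_2)$ and the column maps $x_2\mapsto x_2+x_1P(x_1,x_3)$.

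Proving it needs two things:
\begin{itemize}
\item the generation statement you flagged as the main obstacle;
\item a reduced-word or degree argument that keeps track of the dependence on $x_1$.
\end{itemize}
An argument over $\k(x_1)$ cannot work, because it treats $(x_2+x_1x_3,x_3)$ and $(x_2,x_3+x_1x_2)$ as linear maps fixing a common vertex. For the same reason, unboundedness needs long reduced words, for example powers of a row map composed with a column map, not a single elementary map of large degree.
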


\begin{proposition}
\label{p:locally_negative}
\cite[Propositions 3.8]{BisiFurterLamy}
Let $v \in \Cl$ be any vertex.
Then any (locally injective) loop in the link of $v$ has even length at least $4$.
\end{proposition}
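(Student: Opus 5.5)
The plan is to split according to the type of $v$ and use the two preceding lemmas.

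First, evenness. Every square of $\Cl$ has vertices of types $1,2,3,2$ in cyclic order, so each edge joins a vertex of type $2$ to a vertex of type $1$ or $3$. The link of any vertex is therefore bipartite:
\begin{itemize}
\item at a vertex of type $2$, the two classes are the neighbours of type $1$ and of type $3$;
\item at a vertex of type $1$ or $3$, all neighbours have type $2$, and two of them are joined in the link when they lie on a common square. Via the big square picture, the bicoloring distinguishes the two kinds of type $2$ neighbours, $[f_1,f_2]$ versus $[f_1,f_3]$; I would check that this is well defined using the $V_4$ symmetry.
\end{itemize}
In every case a loop in a bipartite graph has even length. A locally injective loop cannot have length $2$, since that would mean backtracking along a single edge. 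So any such loop has length at least $4$.

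For $v$ of type $2$ or $3$, this is all that is needed: \cref{l:bipartite} says the link is complete bipartite, hence bipartite, and the argument above applies.

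For $v$ of type $1$, I would use \cref{l:link_type1}. Let $\gamma$ be a locally injective loop in $\Gl$, and project it to $\pi(\gamma)$ in the tree $\Tl$. Since $\pi$ is a map of graphs (edges go to edges) and $\Tl$ is bicolored, the bicoloring of $\Tl$ pulls back to a bipartition of $\Gl$ compatible with adjacency. Hence $\gamma$ has even length, and again length $2$ is excluded by local injectivity.

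The main point to double-check is that $\pi$ genuinely sends edges to edges, rather than collapsing some edge to a vertex. If it collapsed edges, the pulled-back coloring would fail. To settle this I would return to the description of $\Gl$ from \cite{BisiFurterLamy}: vertices $[f_1,f_2]$ with $f_1=x_1$, projected via the remaining variable to the Bass--Serre tree of a two-variable group, with adjacency coming from squares whose preimages are the complete bipartite graphs $\pi^{-1}(e)$. Granting that $\pi$ is a graph map as stated, the loop in $\Gl$ has even length, and the proof is complete.
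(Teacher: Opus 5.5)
Your proof is correct and follows the intended route: bipartiteness of the links, from \cref{l:bipartite} for types $2$ and $3$ and from pulling back the bicoloring of $\Tl$ along $\pi$ in \cref{l:link_type1} for type $1$, gives even length, and length $2$ is excluded because these links are simple graphs (for type $1$, every edge of $\Gl$ lies in some $\pi^{-1}(e)$, which is a complete bipartite graph). One small correction: the ``$[f_1,f_2]$ versus $[f_1,f_3]$'' labelling in your first bullet is not well defined as stated, since the transpose lies in $V_4$ and turns every row pair into a column pair, but you never actually use it, because the lemmas already supply the bipartitions.
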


The second ingredient for the $\CAT(0)$ property is the global property of being simply connected.

\begin{proposition}
\label{p:C_1-connected}
\cite[Proposition 3.10]{BisiFurterLamy}
The square complex $\Cl$ is simply connected.
\end{proposition}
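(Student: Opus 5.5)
We need to prove that the square complex $\Cl$ is simply connected. The plan is the standard one for complexes built from a group: show that the group is generated by the stabilizers of the vertices of a fundamental domain, and that every loop can be reduced using a degree argument.

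\emph{Step 1: connectedness.} The big square associated to $\id$ is a fundamental domain for the action of $\Tame_q(\A^4)$. By definition $\Tame_q(\A^4)$ is generated by $\O_4(q)$ and $E$. Here $\O_4(q)$ is the stabilizer of the type $3$ vertex $\smallpmat{x_1 & x_2 \\ x_3 & x_4}$, and $E$ fixes the type $2$ vertex $[x_2,x_4]$. Hence $\Cl$ is connected, by the usual argument: if $G=\lb G_1,\dots,G_k\rb$ with each $G_i$ stabilizing a vertex of a connected fundamental domain $F$, then $\bigcup_g gF$ is connected.

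\emph{Step 2: reduce to type 3 vertices.} A combinatorial loop in $\Cl$ can be homotoped, inside squares, so that it is a concatenation of paths of length $2$ between type $3$ vertices through type $2$ vertices. This uses the fact that links of type $1$ vertices are connected, which follows from \cref{l:link_type1}: the tree $\Tl$ is connected, and the fibers over edges are complete bipartite. Two type $3$ vertices adjacent to a common type $2$ vertex $[x_2,x_4]$ differ, up to $\O_4(q)$, by an element of the stabilizer of $[x_2,x_4]$. That stabilizer is generated by $E$ and linear maps.

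\emph{Step 3: the main step, a degree reduction.} I would define a degree on type $3$ vertices, for instance $\deg\smallpmat{f_1&f_2\\f_3&f_4}=\sum\deg f_i$ minimized over representatives, or a multidegree in $\N^4$ with distinct top terms as in the Shestakov--Umirbaev setting. Given a loop, take a vertex $v$ of maximal degree on it. The two neighbours $v',v''$ of $v$ along the loop are joined to $v$ through type $2$ vertices $w',w''$. I want to show that the path $v'-w'-v-w''-v''$ can be homotoped, through squares and the complete bipartite links of \cref{l:bipartite}, into a path whose type $3$ vertices all have degree strictly less than $\deg v$, or otherwise replaced by a path avoiding $v$ at lower maximal degree.

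The required input is a reduction theorem analogous to \cref{t:reducibility}: every type $3$ vertex other than the base vertex admits an elementary reduction, meaning a neighbour at distance $2$ of strictly smaller degree. One also needs a structural statement saying that any two reductions of $v$ are compatible, for example that they pass through type $2$ vertices sharing a type $1$ vertex, so that the triangle $v',v,v''$ bounds a disk of squares of lower degree. Proving this is the hard part. I would rely on the isomorphism $\Tame_q(\A^4)\simeq\Tame(\SL_2)$ and on a Shestakov--Umirbaev/Kuroda style analysis of the top homogeneous components of $f_1f_4-f_2f_3=q$. The key point is that $\bar f_1\bar f_4=\bar f_2\bar f_3$ at top degree forces a strong factorization structure. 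That structure should make the reductions essentially unique up to the square structure, which is simpler than the three-dimensional case.

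Induction on the pair (maximal degree on the loop, number of vertices realising it) then contracts every loop to the fundamental domain, which is contractible. I expect Step 3, the existence and compatibility of reductions, to be the main obstacle.
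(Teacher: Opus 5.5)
Your architecture matches the paper's. You push the loop off type $1$ vertices using connectedness of their links, so that it alternates between types $2$ and $3$. You base it at the identity vertex. Then you remove a type $3$ vertex of maximal degree by a local homotopy across a disk of squares around a type $1$ vertex.

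The gap is in Step 3, which carries all the content: the input you ask for does not match the situation you actually face. You want compatibility of two \emph{reductions}, i.e.\ two neighbours of strictly smaller degree. But distinct type $3$ vertices at distance $2$ can have the same degree. Take $f=\smallmat{x_1+x_2x_4^2 & x_2\\ x_3+x_4^3 & x_4}$ and $e=\smallmat{x_1+x_2x_4 & x_2\\ x_3+x_4^2 & x_4}\in E$. The vertices $[f]$ and $[e\circ f]$ are distinct, both adjacent to $[x_2,x_4]$, and have equal degree for either of the degrees you propose. So vertices of maximal degree can be consecutive along the loop, and a maximal vertex may have both neighbours of the same degree; your lemma then says nothing. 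A global reducibility theorem does not rescue this. Inserting a detour to a strict reduction $u$ of $v$ still requires relating an equal-degree neighbour to $u$.

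The paper handles this by taking $\gamma(2i_0)=[f]$ with $i_0$ the \emph{largest} index realizing the maximum. Then $\deg\gamma(2i_0-2)\le\deg\gamma(2i_0)>\deg\gamma(2i_0+2)$, with $\gamma(2i_0-2)=[e\circ f]$ and $\gamma(2i_0+2)=[e'\circ f]$ for $e,e'$ elementary up to a permutation of coordinates. The lemma actually needed is this: two distinct elementary moves at $[f]$, one merely non-increasing and one strictly decreasing, force a reduction of $f$ by a \emph{triangular} automorphism. This yields a disk of four squares around a type $1$ vertex whose remaining type $3$ corner has degree $<\deg f$. Homotoping across it lowers the number of maximal vertices; the case analysis is deferred to \cite{BisiFurterLamy}.

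This is not a uniqueness statement. With $r=\smallmat{x_1 & x_2\\ x_3+x_1^2 & x_4+x_1x_2}$ and $c=\smallmat{x_1 & x_2+x_1^2\\ x_3 & x_4+x_1x_3}$, which commute, $[r\circ c]$ has the two distinct elementary reductions $[c]$ and $[r]$. They are linked precisely through such a four-square disk around $[x_1]$, with fourth corner $[\mathrm{id}]$.

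Your idea of exploiting $\bar f_1\bar f_4=\bar f_2\bar f_3$ is a reasonable starting point. But until this weak/strict two-reduction lemma is stated and proved, Step 3 is a program rather than a proof. Conversely, the global reducibility theorem you list as required input is unnecessary: with the above choice of $i_0$, the strict reduction is supplied by the loop itself.
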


\begin{proof}
We consider a loop $\gamma$ in $\Cl$, and we want to find a homotopy to a constant loop. First we can assume that $\gamma$ has image in the $1$-skeleton of $\Cl$, and so is characterized by the sequence of vertices it passes through.
By \cref{l:link_type1}, the link of each vertex of type 1 is connected and so performing some local homotopy we can assume that the image of $\gamma$ contains only vertices of type $2$ and $3$, say $\gamma(2i)$ has type $3$ and $\gamma(2i+1)$ has type 2 for each $i$.
Moreover, using the action of $\Tame_q(\A^4)$ we can assume that $\gamma(0)$ corresponds to the vertex of type $3$ represented by the identity, and up to homotopy we can assume that the loop is locally injective.
Now we have a sequence of vertices $\gamma(2i)$ of type $3$. We pick the largest index $i_0$ such that $\gamma(2i_0)$ realizes the maximum of the degrees of this sequence. In particular,
\[
\deg(\gamma(2i_0-2)) \le \deg (\gamma(2i_0)) > \deg(\gamma(2i_0+2)).
\]
We can write $\gamma(2i_0) = [f]$, $\gamma(2i_0-2) = [e \circ f]$ and $\gamma(2i_0+2) = [e' \circ f]$, where $e'$ is an elementary automorphism and $e$ is conjugate to an elementary automorphism by a permutation of the coordinates.

Then the idea to finish the proof is that the existence of two distinct elementary reductions forces the existence of a reduction via a triangular automorphism, which corresponds to a disk of four squares around a vertex of type 1 and allows performing a local homotopy.
For instance \cref{f:local_homotopy} illustrates one possibility, we refer to \cite[Proposition 3.10]{BisiFurterLamy} for the detail of the three other possible cases.
\end{proof}

\begin{figure}
\centering
\includegraphics{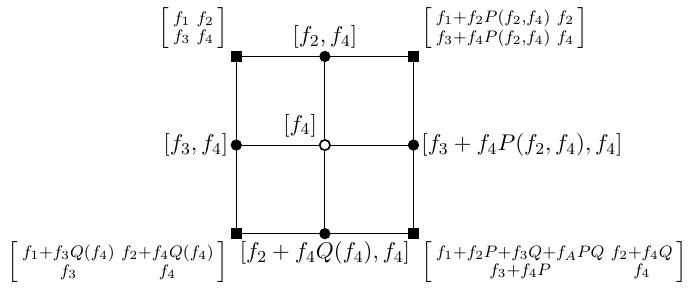}
\caption{A local homotopy.}
\label{f:local_homotopy}
\end{figure}

Putting together \cref{p:locally_negative,p:C_1-connected}, and using Cartan--Hadamard's \cref{t:CA}, we get:

\begin{theorem}
\label{t:CAT0}
\cite[Corollary 3.11]{BisiFurterLamy}
The square complex $\Cl$ is $\CAT(0)$.
\end{theorem}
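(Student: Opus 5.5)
The plan is to apply Cartan--Hadamard's \cref{t:CA} to $\Cl$ with its piecewise Euclidean metric, where each square is a unit Euclidean square. Since $\Cl$ has only finitely many isometry types of cells (just one), a theorem of Bridson \cite{BridsonHaefliger} shows that $\Cl$ is a complete geodesic metric space. This puts us in the setting of \cref{t:CA}, and it remains to check the two conditions there: local $\CAT(0)$ and simple connectedness.

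For the local condition, we use Gromov's link condition for cube complexes. This is the special case of the link condition in \cite[II.5]{BridsonHaefliger} for $M_\kappa$-polyhedral complexes. A square complex is locally $\CAT(0)$ if and only if, at every vertex $v$, the link (metrized with each edge of length $\pi/2$) is $\CAT(1)$. For a metric graph this amounts to every locally injective loop having length at least $2\pi$, i.e.\ at least $4$ edges. At points interior to edges or squares the space is locally a Euclidean half-plane pasting, or flat, so only vertices matter. The vertex condition is exactly \cref{p:locally_negative}: every locally injective loop in the link has even length at least $4$. One should also make sure that the link of a vertex is a simplicial graph, i.e.\ that two squares sharing a vertex do not share two consecutive edges without coinciding. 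Otherwise, loops of length $2$ in the link would appear. This is part of what \cref{l:bipartite} and \cref{l:link_type1} encode, since complete bipartite graphs and fibers of a tree map have no bigons.

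For the global condition we invoke \cref{p:C_1-connected}, which says that $\Cl$ is simply connected. Combining the two gives that $\Cl$ is $\CAT(0)$ by \cref{t:CA}.

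The main obstacle is not in this assembly but in the inputs, above all \cref{p:C_1-connected}, which relies on the reduction theory: a maximal-degree vertex on a loop admitting two distinct elementary reductions forces a triangular reduction, which produces a flat disk of four squares allowing a local homotopy. A secondary point to verify carefully is the translation from ``loops of length $\ge 4$ in the link'' to the $\CAT(1)$ condition on the link graph, including that the loops are measured with angle $\pi/2$ per square corner.
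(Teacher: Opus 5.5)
Your proposal is correct and follows the paper's argument: the local $\CAT(0)$ property comes from the link condition via \cref{p:locally_negative}, simple connectedness from \cref{p:C_1-connected}, and the conclusion from Cartan--Hadamard's \cref{t:CA}. Your additional remarks are harmless supplements to details the paper leaves implicit: completeness via Bridson's finite-shapes theorem, and the absence of bigons in links, which is already contained in \cref{p:locally_negative}.
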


By understanding the stabilizers of vertices and edges in $\Cl$ we can answer \cref{q:linearization} and \cref{q:tits}.
First using \cref{l:abstract_linearization} we get:

\begin{proposition}
\label{p:linearizable_Tameq}
\cite[Theorem B]{BisiFurterLamy}
Let $\k$ be a field of characteristic zero, and $F \in \Tame_q(\A^4)$ a finite subgroup.
Then $F$ is linearizable, meaning there exists $\phi \in \Tame_q(\A^4)$ such that $\phi F \phi^{-1} \subset \O_4(q)$.
\end{proposition}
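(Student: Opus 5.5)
The plan mirrors the proof of \cref{p:linearizable_A2}, replacing the Bass--Serre tree by the square complex $\Cl$.

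First, $\Cl$ is $\CAT(0)$ by \cref{t:CAT0}. It is also complete: it is a square complex with finitely many isometry types of cells (only one), so by Bridson's theorem \cite{BridsonHaefliger} it is a complete geodesic space. Since $F$ acts by isometries, \cref{l:circumcenter} gives a point $p \in \Cl$ fixed by $F$. The action is by cellular automorphisms, so $F$ stabilizes the open cell containing $p$. We have to be careful here, because elements may permute the vertices of a square or edge. The four vertices of a square have distinct types (types $1,2,2,3$), and the two type-2 vertices can be swapped (by transposition, for instance). The type-1 and type-3 vertices of a cell containing $p$ are nevertheless fixed, since the action preserves types. In every case $F$ therefore fixes a vertex: the type-1 or type-3 vertex of the cell, or the vertex $p$ itself. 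Using transitivity of the action on vertices of a given type, we conjugate so that $F$ fixes either $\smallpmat{x_1 & x_2 \\ x_3 & x_4}$ or $[x_1]$. If a type-2 vertex were needed we could still handle it, but the above reduction avoids this.

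Second, we identify the stabilizers. The stabilizer of the type-3 vertex is $\O_4(q)$ (assuming, as the definition of vertices suggests, that $\smallpmat{f}$ equals the identity class exactly when $f \in \O_4(q)$). In that case we are already done. The stabilizer of $[x_1]$ consists of the $f \in \Tame_q(\A^4)$ with $f_1 \circ f^{-1} \in \k^* x_1$, that is, with $f_1 = a x_1$. Working with $q = x_1x_4 - x_2x_3$, these automorphisms preserve the fibration $x_1 = $ const. I expect the stabilizer to be of the form $M \rtimes L$, where:
\begin{itemize}
\item $L \subset \O_4(q)$ is the linear part, consisting of the linear maps fixing the line $[x_1]$ (e.g.\ $(A,B)$ with $A,B$ lower-triangular in the $\SL_2\times\SL_2$ description, together with scalings);
\item $M$ consists of the maps $\smallmat{x_1 & x_2 + x_1 P \\ x_3 + x_1 Q & x_4 + x_2 Q + x_3 P + x_1 PQ}$ with $P,Q \in \k[x_1, \dots]$ suitably constrained.
\end{itemize}
Concretely, one should use the description of this stabilizer from \cite{BisiFurterLamy}, where $M$ is a unipotent-type group built from elementary automorphisms with polynomial coefficients in $x_1$.

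Third, and this is the main obstacle, we must check the averaging hypothesis of \cref{l:abstract_linearization}: the mean of finitely many elements of $M$ must again lie in $M$. For the triangular group $B_2$ this was obvious by linearity. Here the fourth component is quadratic in the parameters (it carries the term $x_1PQ$), so a naive average does not preserve $q$. My first attempt would be to choose $M$ cleverly, parametrizing it so that the defining condition is affine. For instance, take $M$ to be the maps whose first three components are $(x_1,\, x_2 + x_1 P(x_1,x_2,x_3)\ldots)$, with $f_4$ determined by $q$, and require $f_1 = x_1$ and $f_2, f_3 \equiv x_2, x_3 \bmod x_1$ (or an analogous normalization), so that averaging the first three components keeps them in an affine space and $f_4$ is recovered by division by $x_1$. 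The delicate points are to check that the averaged map is still tame and invertible, and that the divisibility needed to define $f_4$ persists under averaging. If a direct averaging fails, the fallback is to restrict further: $F$ acts on the link of $[x_1]$, which is described by the tree $\Tl$ of \cref{l:link_type1}. We make $F$ fix an edge of $\Tl$ (a finite group acting on a tree), hence fix a complete bipartite fiber, which reduces $F$ into a smaller, triangular-type stabilizer where averaging is linear. This conjugation then lands $F$ in $L \subset \O_4(q)$.
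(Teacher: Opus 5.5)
Your overall route is the paper's: a fixed point in the $\CAT(0)$ complex $\Cl$, then the structure of vertex and edge stabilizers, then \cref{l:abstract_linearization}. However, the decisive case, where $F$ fixes only a vertex of type $1$, is not handled, and the structure you expect for that stabilizer is wrong.

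There is also a smaller slip. If the fixed point $p$ is itself a vertex of type $2$, no type-$1$ or type-$3$ vertex is available, so this case is not avoided. It is easy, though: $\Stab([x_1,x_2])=M\rtimes L$ with $L$ linear and $M=\{\smallmat{x_1 & x_2\\ x_3+x_1P & x_4+x_2P} \mid P\in\k[x_1,x_2]\}$, which is affine in $P$ and so closed under means.

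The real problem is $\Stab([x_1])$. It contains $e_P=\smallmat{x_1 & x_2+x_1P\\ x_3 & x_4+x_3P}$ for all $P\in\k[x_1,x_3]$ and $e'_Q=\smallmat{x_1 & x_2\\ x_3+x_1Q & x_4+x_2Q}$ for all $Q\in\k[x_1,x_2]$. It therefore contains all their alternating products, so it is of amalgamated-product type, like a tame group of $\A^2$ over $\k[x_1]$. This is exactly what the unbounded tree of \cref{l:link_type1} reflects, and it is not $M\rtimes L$ with your $M$. Your ``clever'' normalization does not rescue averaging either. It contains $e_{x_3^2}$ and $e'_{x_2^2}$, and their mean has first three components $(x_1,\,x_2+\tfrac12x_1x_3^2,\,x_3+\tfrac12x_1x_2^2)$, whether you average all four components or only the first three and set $f_4=(q+f_2f_3)/f_1$. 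Its Jacobian determinant is $1-x_1^2x_2x_3$, so it is not an automorphism.

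Your fallback through the tree is the right idea, but as written it does not close the gap. A finite group acting on a tree fixes a vertex or the midpoint of an edge, not an edge in general. A fixed point in $\Tl$ (which also requires $\pi$ to be $\Stab([x_1])$-equivariant) only says that $F$ preserves a fiber $\pi^{-1}(v)$ or $\pi^{-1}(e)$ setwise. It does not say that $F$ fixes a vertex of $\Cl$. What is missing is the identification of these setwise stabilizers and the check of the hypotheses of \cref{l:abstract_linearization} for them.

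For instance, take the subgroup of $\Stab([x_1])$ preserving $\{[x_1,x_2+x_1c(x_1)] \mid c\in\k[x_1]\}$. Up to its linear part, it consists of the maps with $f_1=x_1$, $f_2=x_2+x_1c(x_1)$, $f_3=x_3+x_1R(x_1,x_2)$, $f_4=x_4+x_2R+x_3c+x_1cR$. Even here the mean in $\k^4$ does not preserve $q$, because $f_4$ is quadratic in $(c,R)$. One has to apply \cref{l:abstract_linearization} to the faithful action on $\k^3$ through $(f_1,f_2,f_3)$, which is triangular, does not involve $x_4$, and is stable under means, and then recover $f_4=(q+f_2f_3)/f_1$. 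That is your trick, and it does work, but only after $F$ has been pushed into such a triangular-type subgroup. That step is exactly what your proposal leaves undone.
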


\begin{proposition}
\label{p:tits_A4q}
\cite[Theorem C]{BisiFurterLamy}
The group $\Tame_q(\A^4)$ satisfies the Tits alternative.
\end{proposition}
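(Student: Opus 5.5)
The plan is to follow the template of the Tits alternative for $\Aut(\A^2)$ in \cref{t:subgroups_Aut_A2}, now for the action of a subgroup $G \subset \Tame_q(\A^4)$ on the square complex $\Cl$. By \cref{t:CAT0}, $\Cl$ is a complete $\CAT(0)$ square complex, and it is finite dimensional. So we can use the classification of actions on $\CAT(0)$ cube complexes, in the form of the Sageev--Wise / Caprace--Sageev type alternative. Either $G$ contains a free group of rank $2$, or $G$ fixes a point of $\Cl$, or $G$ has a finite orbit in the visual boundary, or $G$ virtually stabilizes a flat or line. As in the two-dimensional case, when $\car(\k)>0$ we should restrict to finitely generated $G$; here $\k$ is assumed of characteristic zero, so no restriction is needed.

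The first step is to treat elliptic subgroups. If $G$ fixes a point, then it fixes a cell. Passing to an index at most $8$ subgroup that acts trivially on the cell, $G$ lies in the stabilizer of a vertex. Up to conjugacy, the vertex stabilizers are as follows:
\begin{itemize}
\item type $3$: $\O_4(q)$, which is linear, so the classical Tits alternative applies;
\item type $2$: the stabilizer of $[x_1,x_2]$, which is an extension of a linear group by the abelian group of elementary-type automorphisms $\smallmat{x_1 & x_2 \\ x_3 + x_1 P + x_2 Q & x_4 + \dots}$;
\item type $1$: the stabilizer of $[x_1]$, whose structure is governed by \cref{l:link_type1}, and which maps onto an automorphism group of a tree (over $\k(x_1)$) with kernel a metabelian group.
\end{itemize}
For each of these I would verify the alternative directly. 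In the type $2$ case, use the projection to $\GL_2$ with solvable kernel and apply Tits for $\GL_2$. In the type $1$ case, apply the two-variable result \cref{p:tits_A2} over the field $\k(x_1)$ to the image, noting that the kernel is solvable.

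The second step treats non-elliptic $G$. If $G$ contains two loxodromic elements with distinct axes in general position, a ping-pong argument produces a free subgroup. Otherwise $G$ fixes a point at infinity or preserves a flat. In that case I would show, using that the link of a type $1$ vertex maps to a tree and that pointwise stabilizers of large segments are small, that $G$ is virtually contained in an extension of a metabelian group of bounded-degree elliptic elements by $\Z$ or $\Z^2$. This mirrors items \ref{t:subgroups_Aut_A2:2} and \ref{t:subgroups_Aut_A2:3} of \cref{t:subgroups_Aut_A2}.

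The main obstacle is this parabolic, non-elliptic case, where $G$ fixes a boundary point. Here $G$ need not have a fixed point but is an increasing union of elliptic subgroups. I would try to show that the stabilizers of a nested sequence of vertices along a geodesic ray are eventually contained in a fixed solvable group of bounded derived length, hence $G$ is solvable.
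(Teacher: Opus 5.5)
Your elliptic step is fine, and your overall plan matches what the paper points to. The paper gives no proof; it cites \cite{BisiFurterLamy} and says the alternative follows from understanding vertex and edge stabilizers of $\Cl$. Concretely, $\O_4(q)$ is linear and $\Stab([x_1,x_2])$ is abelian-by-linear. The kernel of $\Stab([x_1])\to\k^*$ embeds into $\Aut(\A^2)$ over $\k(x_1,q)$, using $x_4=(q+x_2x_3)/x_1$. So \cref{p:tits_A2} and \cref{l:tits_stable_extension} apply.

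\textbf{The gap.} Everything after the elliptic case is unproved. The lineal and focal cases are only announced. The one mechanism you give for the parabolic case rests on a false claim: that pointwise stabilizers of long pieces of a geodesic ray eventually lie in a fixed solvable group. Here is a counterexample.
\begin{itemize}
\item Let $\Gamma\subset\SL_2(\k)$ be the binary icosahedral group, and let $P$ be a nonconstant binary form with $P((u,v)B^t)=P(u,v)$ for all $B\in\Gamma$.
\item Set $e_1=\smallmat{x_1+x_3P(x_3,x_4) & x_2+x_4P(x_3,x_4)\\ x_3 & x_4}$ and $e_2=\smallmat{x_1 & x_2\\ x_3+x_1P(x_1,x_2) & x_4+x_2P(x_1,x_2)}$. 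Both lie in $\Tame_q(\A^4)$.
\item Both commute with the linear maps $s_B\colon M\mapsto MB^t$, $B\in\Gamma$. Hence $s_B\cdot[h]=[h\circ s_B^{-1}]=[s_B^{-1}\circ h]=[h]$ for every $h=e_{i_k}\circ\cdots\circ e_{i_1}$ with alternating indices. The maps $s_B$ also fix the row vertices of such $h$, since those rows only change by $B^{-t}$.
\item Consecutive vertices $[h]$ and $[e_i\circ h]$ share a row. The resulting edge path turns by angle $\pi$ at every vertex, by \cref{l:bipartite}: at a type $3$ vertex it passes between two distinct rows of the same ruling, and at a type $2$ vertex between two distinct type $3$ neighbours.
\item So it is a geodesic ray in the $\CAT(0)$ space $\Cl$. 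Its pointwise stabilizer contains $\Gamma$, which has order $120$ and is not solvable.
\end{itemize}
So ``$G$ is solvable'' is the wrong target. At best the groups along a ray are virtually solvable with uniformly bounded index and derived length. You would then still need an argument such as \cref{l:tits_uniform_countable} to pass to the increasing union. The same example undercuts the ``small segment stabilizers'' input in your lineal case. Finally, describing a parabolic group as nested stabilizers along one ray is a tree argument. In a square complex, rays to the same boundary point need not merge (flat strips), so that reduction also needs proof.

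\textbf{A way to close it.} The scheme the paper sketches for $\Tame(\A^3)$ in \parref{sec:tits_A3} avoids rays altogether.
\begin{itemize}
\item Do not use Sageev--Wise: it assumes a proper action, whereas here vertex stabilizers contain $\SL_2(\k)$ and $\Cl$ is locally infinite. Use instead the hyperbolicity of $\Cl$ (\cref{p:hyperbolic}) and the classification of actions on hyperbolic spaces.
\item A group of general type contains $F_2$ by ping-pong.
\item Otherwise, either all elements are elliptic, or a subgroup of index $\le 2$ fixes a boundary point $\xi$. In the second case, the kernel of the Busemann homomorphism at $\xi$ is exactly the set of elliptic elements. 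This holds because cellular isometries of $\Cl$ are semisimple, and a loxodromic element fixes only its two endpoints. By \cref{l:tits_stable_extension} you are reduced to groups all of whose elements are elliptic.
\item In such a group, every finitely generated subgroup fixes a vertex; this is true for finite-dimensional $\CAT(0)$ cube complexes.
\item The three vertex stabilizers satisfy a uniform alternative with bounded index and derived length. For the linear group $\O_4(q)$ this comes from the Jordan--Mal'cev--Zassenhaus bounds; type $2$ adds one abelian layer; for type $1$ use \cref{t:subgroups_Aut_A2}.
\item \cref{l:tits_uniform_countable} then concludes.
\end{itemize}
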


To answer \cref{q:simplicity}, using the same strategy as for $\Aut(\A^2)$, we need to establish first the Gromov hyperbolicity of the complex $\Cl$. The key property is the following:

\begin{lemma}
\label{l:no_grid}
\cite[Propositions 3.13]{BisiFurterLamy}
The complex $\Cl$ does not contain any $6 \times 6$ flat grid centered on a vertex of type $1$.
\end{lemma}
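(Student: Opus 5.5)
We argue by contradiction: suppose $\Cl$ contains a $6\times 6$ flat grid, i.e.\ an isometrically embedded copy of $[-3,3]^2$ tiled by 36 unit squares, whose center $v$ has type $1$. Since $\Tame_q(\A^4)$ acts transitively on vertices of type $1$, we may assume $v = [x_1]$. Every square has one vertex of each of types $1$ and $3$ and two of type $2$. So the grid vertices follow a checkerboard pattern: vertices at coordinates $(i,j)$ with $i,j$ both even have type $1$, those with both odd have type $3$, and the mixed ones have type $2$. The four squares around $v$ form a loop of length $4$ in the link $\Gl$ of $[x_1]$. By \cref{l:link_type1}, this loop projects under $\pi$ either to a single edge of the tree $\Tl$ or to a degenerate path; in either case it lies over one edge, so the four squares sit inside a big square of some $f$ with $f_1 = x_1$. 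Applying an element of $\Stab([x_1])$, we normalize so that the central $2\times 2$ block is the big square of the identity, with vertices $[x_1],[x_1,x_2],[x_1,x_3],\smallpmat{x_1&x_2\\x_3&x_4}$, etc.

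Next we propagate outward along the two axes of the grid. A geodesic line through $v$ in the $1$-skeleton alternates types $1,2,1,2,\dots$ (along a row $j=0$). Moving from $[x_1]$ through $[x_1,x_2]$ to the next type-$1$ vertex gives a vertex $[x_2+ x_1P(x_1)]$-like component, or more generally a component $g$ with $[x_1,g]=[x_1,x_2]$. Flatness is what matters: the angle at $[x_1,x_2]$ between the two horizontal edges must be $\pi$, i.e.\ distance $2$ in the link. Since the link of a type-$2$ vertex is complete bipartite (\cref{l:bipartite}), this means the two type-$1$ neighbours are distinct. So along each axis we obtain a sequence of type-$1$ vertices $[x_1],[g_2],[g_4],\dots$, which, following the degree-reduction analysis from the proof of \cref{p:C_1-connected}, we can write via successive elementary automorphisms. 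Concretely, the grid lines through $v$ correspond to compositions of elementary automorphisms $e_1\circ e_2\circ e_3$ acting on the rows and columns of the matrix, with the rows governed by one $\SL_2$ factor and the columns by the other.

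The contradiction should come from the type-$1$ vertices at positions $(\pm2,\pm2)$ and $(\pm 2,0)$, $(0,\pm2)$, together with the flatness at the type-$1$ vertices $(\pm2,0)$. At such a vertex $w$, flatness means the four squares around $w$ form a loop in its link that is a geodesic circle of length $4$ (angle $2\pi$). By \cref{l:link_type1}(2), this forces the loop to project to a single edge of the tree attached to $w$. Hence both the horizontal and vertical continuations at $w$ must come from one common triangular structure, so the elementary polynomials $P$ involved must depend on a restricted set of variables. We then push this to the next ring: the vertex at $(4,0)$ or $(2,2)$ with the same constraint forces the polynomial degrees to drop, so some reduction becomes both necessary and impossible. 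Put another way, comparing degrees of the type-$3$ vertices $(\pm1,\pm1),(\pm3,\pm1),\dots$, the maximum-degree argument used in \cref{p:C_1-connected} yields a type-$3$ vertex of maximal degree in the grid admitting two distinct reductions in orthogonal directions. This in turn forces a triangular reduction, i.e.\ a disk of four squares around a type-$1$ vertex in a way incompatible with the grid's geometry, namely a shortcut of length less than the grid distance.

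The main obstacle is this last step: making the degree bookkeeping precise enough to show that within radius $3$ the constraints from flatness at the several type-$1$ vertices are incompatible. I would first try to compute explicitly with the normal form: show that a flat $2\times 4$ strip must be contained in the stabilizer of a type-$1$ vertex on its boundary, and then show that two orthogonal such strips cannot coexist. This requires a case check on which of the four components $f_i$ each elementary move modifies.
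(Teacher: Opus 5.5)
There is a genuine gap, and it starts with your local picture at a type $1$ vertex, which is wrong.

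\Cref{l:link_type1} does \emph{not} say that a flat $4$-cycle in $\Gl$ lies over a single edge of $\Tl$. A closed walk of length $4$ in a tree can also be $a-b-c-b-a$, passing through two distinct points of $\pi^{-1}(b)$, and this case really occurs. Use grid coordinates $[-3,3]^2$ with $[x_1]$ at $(0,0)$. The four squares around $[x_1]$ can be normalized to have type $3$ vertices $[\id]$ at $(1,1)$, $[e_P]$ at $(-1,1)$, $[e_Q]$ at $(1,-1)$, and a fourth one at $(-1,-1)$. Here $e_P=\smallmat{x_1 & x_2+x_1P\\ x_3 & x_4+x_3P}$ with $P\in\k[x_1,x_3]$, and $e_Q=\smallmat{x_1 & x_2\\ x_3+x_1Q & x_4+x_2Q}$ with $Q\in\k[x_1,x_2]$, both nonconstant. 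The fourth square requires an automorphism with components $x_1$, $x_2+x_1P$, $x_3+x_1Q$ (up to harmless linear changes). Its Jacobian is $1-x_1^2\,\partial_{x_3}P\,\partial_{x_2}Q$. So the only constraint is that \emph{one} of $P,Q$ lies in $\k[x_1]$; for instance $P=x_3$, $Q=x_1$ gives a flat disk. Your ``single edge'' claim amounts to asking that both lie in $\k[x_1]$, which is false in general.

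Two further steps also fail:
\begin{itemize}
\item The normalization of the central block as ``the big square of the identity'' is impossible. A big square is centered at a type $3$ vertex, whereas the four squares around $[x_1]$ have four distinct type $3$ vertices.
\item The degree argument gives no contradiction. Two reductions in orthogonal directions from a type $3$ vertex of maximal degree produce exactly the kind of flat disk around a type $1$ vertex that the grid already contains, so there is no shortcut. Moreover, the maximum may sit on the boundary of the grid.
\end{itemize}
As you acknowledge, the decisive step is missing.

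What must be tracked is which pair of elementary moves is univariate. For each interior type $2$ vertex $m$, let $P_m$ be the nonconstant polynomial of the elementary move between its two type $3$ neighbours, written in the variables of its two type $1$ neighbours. Three rules then hold.
\begin{enumerate}
\item The Jacobian computation shows that at every interior type $1$ vertex $w$, either the two moves through the vertical neighbours of $w$, or the two moves through its horizontal neighbours, are univariate in the variable of $w$. One checks that this holds for both moves of the pair simultaneously.
\item Since $P_m$ is nonconstant, it is never univariate at both of its ends.
\item One more computation is needed, because the first two rules alone force one of two checkerboard assignments on the nine interior type $1$ vertices $(0,0),(\pm2,0),(0,\pm2),(\pm2,\pm2)$, and each checkerboard satisfies them. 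The extra rule: if $P=P(x_1)$, the move across $(-1,0)$ has polynomial $Q(x_1,\,y-x_1P(x_1))$ in the variables $x_1$ and $y=x_2+x_1P(x_1)$. So it cannot be univariate in $y$ while $Q$ is univariate in $x_2$. The transposed statement holds symmetrically.
\end{enumerate}
Imposing the third rule at the center excludes both checkerboards and yields the contradiction. This is where flatness at all nine type $1$ vertices, and hence the size $6\times 6$, is used.
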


Then we get:

\begin{proposition}
\label{p:hyperbolic}
\cite[Corollary 3.14]{BisiFurterLamy}
The complex $\Cl$ is Gromov hyperbolic.
\end{proposition}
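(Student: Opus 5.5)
We deduce hyperbolicity of $\Cl$ from \cref{t:CAT0} and \cref{l:no_grid}, via the flat plane theorem for $\CAT(0)$ square complexes. The guiding principle: a $\CAT(0)$ square complex that is not hyperbolic must contain arbitrarily large isometrically embedded flat grids. We then rule out large flat grids using the absence of $6\times 6$ flat grids centered at type $1$ vertices.

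First we reduce to the $1$-skeleton. The complex $\Cl$ is quasi-isometric to its $1$-skeleton, since each square has diameter $\sqrt2$. So it suffices to show the graph $\Cl^{(1)}$ is hyperbolic, using Papasoglu's thin bigon criterion (\cref{p:thin_bigon_criterion}). Suppose, for contradiction, that there are pairs of combinatorial geodesics with common endpoints that do not $\eta$-fellow-travel, for $\eta$ arbitrarily large. Because $\Cl$ is $\CAT(0)$ by \cref{t:CAT0}, it is contractible. Hence the loop formed by such a bigon bounds a reduced (minimal area) disk diagram $D \to \Cl$. By \cref{p:locally_negative}, every interior vertex of $D$ has degree at least $4$. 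So $D$ is a nonpositively curved square disk in which combinatorial Gauss--Bonnet applies: interior vertices have curvature $\le 0$, and positive curvature is concentrated on the boundary.

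The central step is a flat-grid extraction lemma, in the style of Wise's or Bridson's results for $\CAT(0)$ square complexes. If a disk diagram bounded by two geodesics has a point at distance $\ge R$ from the boundary, then it contains an embedded flat $R'\times R'$ grid, with $R'\to\infty$ as $R\to\infty$. Such a grid consists of interior vertices of degree exactly $4$ on which the diagram is locally Euclidean. The reasoning is that the two geodesic sides carry little positive boundary curvature; corner curvature appears essentially only at the two endpoints. Gauss--Bonnet then forces all but boundedly many interior vertices to be flat. Alternatively, we could invoke Bridson's theorem: a cocompact $\CAT(0)$ complex is hyperbolic if and only if it contains no isometric flat plane. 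However, $\Cl$ is not locally finite, so we prefer the diagrammatic argument, which needs only finiteness of shapes (one type of square) and gives a uniform constant. We expect this extraction to be the main obstacle. It requires making sure the flat region in $D$ actually maps to a flat grid in $\Cl$. Local injectivity and the absence of reflections in a reduced diagram, together with the $\CAT(0)$ property, should guarantee that the image is isometrically embedded.

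Once a $R'\times R'$ flat grid with $R'\ge 12$ is found, its vertices alternate among types $1$, $2$ and $3$ following the big-square pattern of \cref{f:square}. Every square has exactly one vertex of type $1$, opposite to a vertex of type $3$. A sufficiently large grid therefore contains a $6\times 6$ subgrid centered on a type $1$ vertex, contradicting \cref{l:no_grid}. This bounds $R$ uniformly, so bigons are uniformly thin, and $\Cl$ is Gromov hyperbolic.
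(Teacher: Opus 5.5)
Your outer structure matches the paper's: pass to the $1$-skeleton, verify \cref{p:thin_bigon_criterion}, and rule out large flats with \cref{l:no_grid}. Your type bookkeeping for locating a $6\times 6$ grid centered at a type~$1$ vertex is fine. The paper, however, does no curvature computation. It imports \cite[Theorem~3.5]{AOS12}: in a $\CAT(0)$ square complex the interval $[[x,y]]$ embeds as a subcomplex of $\Z^2$. Your disk-diagram paragraph tries to reprove that input, and as written its two load-bearing claims are unsupported.

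(a) The claim that ``geodesic sides carry little positive boundary curvature'' is not automatic. A combinatorial geodesic in a square complex can have arbitrarily many corners $v$ with $d(v)=1$ (think of a staircase in $\Z^2$). So Gauss--Bonnet gives nothing until you show these corners are cancelled, and your sketch never does this.

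(b) Local injectivity of a reduced diagram together with $\CAT(0)$ does not make the flat part of $D$ embed in $\Cl$. So you do not yet have a flat grid \emph{in $\Cl$} to which \cref{l:no_grid} applies.

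The passage from ``not $\eta$-fellow-travelling'' to ``a point of $D$ at distance $\ge R$ from $\partial D$'' is also only asserted.

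The missing idea for both points is hyperplanes (dual curves). Since $\Cl$ is a $\CAT(0)$ square complex by \cref{t:CAT0}, a combinatorial geodesic crosses each hyperplane at most once. Take a minimal diagram $D$ bounded by geodesics $\gamma,\gamma'$ from $x$ to $y$, split at cut vertices if $\gamma$ and $\gamma'$ meet. Then every dual curve of $D$ joins $\gamma$ to $\gamma'$.

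For (a): suppose two vertices in the interior of one side have $d(v)=1$ and every boundary vertex between them has $d(v)=2$. Then the ladder of squares between them carries a dual curve with both ends on that side, which is impossible. Hence corners of curvature $+\pi/2$ alternate with vertices of curvature $\le -\pi/2$, and each side contributes at most $\pi/2$. Use the normalization $\sum_v K(v)=2\pi$, with boundary curvature $\pi-d(v)\pi/2$ and interior curvature $2\pi-d(v)\pi/2$. Since also $K(x),K(y)\le\pi/2$, Gauss--Bonnet forces \emph{every} interior vertex to have exactly four squares. So $D$ is a flat disk bounded by two monotone staircases, and a point of $\gamma$ at distance $>\eta$ from $\gamma'$ yields a flat square of side of order $\eta$.

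For (b): distinct dual curves of $D$ cross distinct edges of the geodesic $\gamma$, so they map to distinct hyperplanes of $\Cl$ separating $x$ from $y$. A dual curve separating two vertices of $D$ then maps to a hyperplane separating their images. Hence $D^{(0)}\to\Cl^{(0)}$ is an isometric embedding.

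With these additions your route becomes a self-contained proof of the statement cited from \cite{AOS12}, and your conclusion via \cref{l:no_grid} goes through. Without them, the central step is only a heuristic.
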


\begin{proof}
Since Gromov hyperbolicity is stable under quasi-isometry, it is sufficient to prove that the 1-skeleton of $\Cl$ is Gromov hyperbolic.
Consider $x, y$ two vertices, and define the interval $[[x, y]]$ to be the union of all combinatorial geodesics from $x$ to $y$.
By \cite[Theorem 3.5]{AOS12}, $[[x, y]]$ embeds as a subcomplex of $\Z^2$.
Since by \cref{l:no_grid} there is no large flat grid in the complex $\Cl$, it follows that the 1-skeleton of $\Cl$ satisfies the criterion from \cref{p:thin_bigon_criterion}.
\end{proof}

Martin was able to produce elements in $\Tame_q(\A^4)$ satisfying a variant of the WPD property, which by \cref{t:small_cancellation} yields the following:

\begin{proposition}
\label{p:SQuniversal}
\cite[Theorem C]{Martin}
The group $\Tame_q(\A^4)$ is SQ-universal and contains free normal subgroups.
\end{proposition}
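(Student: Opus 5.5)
The plan is to deduce \cref{p:SQuniversal} from the small cancellation \cref{t:small_cancellation}, applied to the action of $\Tame_q(\A^4)$ on the square complex $\Cl$. By \cref{p:hyperbolic}, $\Cl$ is Gromov hyperbolic, and by \cref{t:CAT0} it is $\CAT(0)$, so it remains to produce one loxodromic element with the WPD property, or a variant of WPD that still feeds into the Dahmani--Guirardel--Osin machinery, which is what Martin does. Once such an element $g$ exists, the theorem gives directly that $\Tame_q(\A^4)$ is SQ-universal, and that for $n$ large the normal closure $\normal{g^n}{}$ is free and proper.

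First I will construct a candidate, by analogy with \cref{p:WPD_example} and \cref{p:WPD_in_A3}. I take an element of the form $f = e \circ \sigma$, where $e \in E$ is elementary with $P$ of large degree (for instance $P = x_2^m$ or $P = x_4^m$), and $\sigma \in \O_4(q)$ is a suitable linear element, such as a permutation in $V_4$ composed with a transpose. The composition should alternate between big squares in such a way that the degree grows multiplicatively, as in \cref{l:degree}.

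Next I will prove that $f$ is loxodromic. I will produce an $f$-invariant combinatorial path $\gamma$ through vertices of type $3$, namely $\gamma(k) = f^k(\mathrm{id})$ joined via type $2$ vertices. The aim is to show that $\gamma$ is a local geodesic in the $\CAT(0)$ metric: at each vertex the angle between incoming and outgoing edges should be at least $\pi$. Using the description of links in \cref{l:bipartite} and \cref{l:link_type1}, this amounts to checking that consecutive edges of $\gamma$ lie at distance at least $2$ in the link. In a $\CAT(0)$ space a local geodesic is a geodesic, so $\gamma$ is an axis and $f$ is loxodromic.

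Then I will prove the (weak) WPD property. Given $C$ and $k$ large, let $h$ move both $x = \gamma(0)$ and $f^k(x)$ by at most $C$. By the $\CAT(0)$ convexity of the metric, $h$ moves every point of the middle portion of the segment $[x, f^k x]$ by at most $C$. Using \cref{l:no_grid}, I will show that $h$ must actually fix a long subsegment of $\gamma$ pointwise, up to finitely many choices. The reason is that any displacement would create a large flat strip along $\gamma$, hence a flat grid centered at a type $1$ vertex, which is excluded.

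So the problem reduces to showing that the pointwise stabilizer of a long segment of the axis is finite. This is the main obstacle. Vertex stabilizers in $\Tame_q(\A^4)$ are infinite: they contain $\O_4(q)$ conjugates and the infinite-dimensional triangular-type groups attached to type $1$ vertices. For the stabilizer of a segment to become finite, the large degree of $P$ must make the intersection of consecutive triangular stabilizers shrink to a finite group; this is where the choice of $P$ matters. I expect this may fail to give genuine finiteness over an infinite field $\k$, since tori of diagonal elements may stabilize the whole axis. In that case I would fall back on Martin's variant: showing that the stabilizer of a long segment equals the stabilizer of the whole axis. That weaker property suffices to run the rotating-families form of small cancellation and still yields SQ-universality and free normal subgroups.
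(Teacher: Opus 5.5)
Your top-level reduction matches the paper's. The paper gives no argument of its own: it imports from Martin a loxodromic element of $\Tame_q(\A^4)$ satisfying a variant of WPD for the action on the hyperbolic complex $\Cl$ (\cref{p:hyperbolic}), and then applies \cref{t:small_cancellation}. Producing that element is the whole content of the statement, and the part of your proposal meant to supply it has a genuine gap.

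\textbf{The WPD mechanism does not work.} Convexity only tells you that $h\gamma$ fellow-travels $\gamma$ between $x$ and $f^kx$. It gives no flat strip, since the flat strip theorem concerns bi-infinite geodesics. Moreover \cref{l:no_grid} only excludes $6\times 6$ grids centred at type-$1$ vertices, so narrow strips, and elements shifting $\gamma$ along itself, yield no contradiction. Narrow strips are in fact unavoidable for your axis. Its vertices have type $2$ or $3$, whose links are complete bipartite (\cref{l:bipartite}), so every angle along $\gamma$ is exactly $\pi$. As a result, any square containing an edge of $\gamma$ propagates to a flat strip of width one along all of $\gamma$, because at each vertex the next square exists: the required link vertices lie in opposite parts. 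In the locally infinite complex $\Cl$ there is also no counting argument behind ``up to finitely many choices''. What is missing is a genuine rigidity statement forcing geodesics between the $C$-neighbourhoods of far-apart points of $\gamma$ through prescribed vertices of $\gamma$, the analogue of \cite[Lemma 7.9]{LamyPrzytycki2019} for $\Cl_3$. You would also need an actual computation of the pointwise stabilizer of a long piece of $\gamma$ for a specific $f$.

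\textbf{The torus obstruction you anticipate does occur for your candidates.} The only element of $V_4$ for which $\gamma$ is locally geodesic is $\sigma=\smallmat{x_4 & x_3\\ x_2 & x_1}$. The other elements send the column plane $[x_2,x_4]$ either to itself (so $f$ is elliptic) or to a row plane, which gives angle $\pi/2$. For $P=x_2^m$, the one-parameter torus $t_b=\smallmat{b^{m+1}x_1 & bx_2\\ b^{-1}x_3 & b^{-m-1}x_4}$ commutes with $e$ and satisfies $\sigma t_b\sigma^{-1}=t_b^{-1}$. Hence $f^{-k}t_bf^k=t_b^{\pm1}\in\O_4(q)$ for all $k$, and the whole torus fixes $\gamma$ pointwise; the case $P=x_4^m$ is similar. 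Over the infinite field $\k$ this $f$ is therefore not WPD, and \cref{t:small_cancellation} as stated does not apply.

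\textbf{The fallback does not close the gap.} A tightness-type variant in the style of Cantat--Lamy needs two things. First, any $h$ sending a long segment of the axis into a small neighbourhood of the axis must preserve the axis. Second, elements preserving the axis must normalize $\lb f\rb$. Your condition that ``the stabilizer of a long segment equals that of the axis'' only concerns elements fixing the segment. It therefore misses exactly the small displacements that are the real difficulty, and it omits the normalization requirement. Appealing to ``Martin's variant'' at that point is simply the citation the paper makes, not an independent proof.
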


Using the complex $\Cl$, Dang studied the dynamical degrees of elements in $\Tame_q(\A^4)$, and obtained the following gap property:

\begin{proposition}
\label{p:gap_dang}
\cite[Corollary 2]{Dang}
Let $f \in \Tame_q(\A^4)$, and assume that the dynamical degree satisfies $\lambda(f) >1$.
Then $\lambda(f) \ge 4/3$.
\end{proposition}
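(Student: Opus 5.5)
I would argue through the action of $f$ on the $\CAT(0)$ square complex $\Cl$ of \cref{t:CAT0}, and relate $\lambda(f)$ to degree growth along an axis. The first step is to classify $f$ as an isometry of $\Cl$. Since $\Cl$ is a $\CAT(0)$ square complex with finitely many shapes of cells, every isometry is semisimple: it is either elliptic, fixing a point, or loxodromic, with an axis. If $f$ is elliptic, it fixes a vertex, after possibly passing to the barycentric subdivision. Its conjugacy class then lies in a vertex stabilizer. These stabilizers are conjugate to $\O_4(q)$, or to groups of the form $\Stab[x_1,x_2]$ or $\Stab[x_1]$, which consist of triangular-like automorphisms. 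In each case the degrees of the iterates grow at most polynomially, so $\lambda(f)=1$. Hence we may assume $f$ is loxodromic.

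Next I would find a combinatorial axis. Up to replacing $f$ by a power, which changes $\lambda$ to $\lambda^k$ and so preserves the property $\lambda>1$ but not the bound, I must be careful here. I would rather use a combinatorial path of type-3 vertices $v_0, v_1=e_1\cdot v_0,\dots$ along the axis, each step being an elementary move, together with the degree function on type-3 vertices as in the proof of \cref{p:C_1-connected}. The key step is a quantitative analogue of \cref{l:degree}. Along a locally geodesic path in $\Cl$ through type-3 vertices, each time the path turns at a type-1 vertex, \cref{l:link_type1} shows the geodesic genuinely crosses the tree $\Tl$. At that point the degree multiplies by a factor that is the degree of a nonlinear elementary polynomial $P(x_2,x_4)$, and it is at least $2$ in the relevant coordinate. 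Because of the quadratic constraint $f_1f_4-f_2f_3=q$, however, the degree of one component can only grow by a factor compensated in another. I expect the net multiplicative gain per fundamental period, measured through a suitable weighted degree on the $2\times2$ matrix of components, to be at least $4/3$.

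Finally, I would conclude via $\lambda(f)=\lim \deg(f^k)^{1/k}$. The degree of $f^k$ is comparable, up to bounded factors, to the degree of the vertex $f^k\cdot v_0$. By the previous step this is at least $c\,(4/3)^{k}$ whenever the axis contains at least one turn at a type-1 vertex. If the axis contains no such turn, it stays in a flat region where degrees grow polynomially, giving $\lambda=1$. It remains to show that $\lambda>1$ forces such turns; no-grid \cref{l:no_grid} should guarantee it.

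The main obstacle is the exact degree bookkeeping in the second step. Cancellations of top monomials forced by $q$ mean the gain is not simply $\deg P\ge2$, and extracting the sharp constant $4/3$ will require a case analysis of how the top bidegrees of $f_1,\dots,f_4$ sit relative to the weight of $q$. I would try first to choose a weight vector adapted to $f$, in the style of Shestakov--Umirbaev, making the top terms of the four components pairwise non-proportional. Then I would compute the worst case, which should be $\deg P=2$ with top degrees in ratio $3:4$.
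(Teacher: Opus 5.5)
\textbf{There is a genuine gap: your reduction to the loxodromic case rests on a false claim.} Stabilizers of type-$2$ and type-$3$ vertices are indeed harmless. The stabilizer of a type-$1$ vertex, however, is not a group of triangular-like maps, and it contains elliptic elements with $\lambda>1$. For instance
\[
g=\smallmat{x_1 & x_2 \\ x_3+x_1x_2^2 & x_4+x_2^3}, \qquad h=\smallmat{x_1 & x_2+x_1x_3^2 \\ x_3 & x_4+x_3^3}
\]
are a row operation and a column operation, i.e.\ conjugates of elements of $E$ by elements of $\O_4(q)$. Both fix $x_1$, hence the vertex $[x_1]$, so $h\circ g$ is elliptic. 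Yet $h\circ g$ preserves the hyperplane $\{x_1=1\}$, and there it acts on $(x_2,x_3)$ by $(x_2+(x_3+x_2^2)^2,\,x_3+x_2^2)$. This is a composition of two quadratic elementary maps of the plane, so by \cref{l:degree} its $k$-th iterate has degree $4^k$. Hence $\deg (h\circ g)^k\ge 4^k$ and $\lambda(h\circ g)\ge 4$. Elements fixing a type-$1$ vertex therefore need their own analysis, for instance through their action on the tree $\Tl$ of \cref{l:link_type1}, which is close to a two-variable problem over $\k(x_1)$. For the same reason, your closing claim that $\lambda>1$ forces an axis with turns is false as stated.

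\textbf{The loxodromic step, which is the heart of the matter, is asserted rather than proved.} \cref{l:link_type1} and \cref{l:no_grid} are purely combinatorial statements about $\Cl$ and carry no information about degrees. Nothing in your argument shows any of the following:
\begin{itemize}
\item that a turn of the axis multiplies the degree by $\deg P$;
\item that the cancellations forced by $q$ leave a net factor $4/3$ per period;
\item that the degrees of the vertices $[f^k]$ are even monotone along the axis.
\end{itemize}
Any such control requires degree inequalities of Shestakov--Umirbaev type, such as the parachute inequality and the principle of two maxima. The survey gives no proof of this proposition; it only refers to Dang, whose argument couples the action on $\Cl$ with valuative estimates of this kind, and the constant $4/3$ comes out of that analysis. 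In your proposal, the value $4/3$ and the worst case ``ratio $3:4$'' are guesses, so the loxodromic case remains unproved as written.
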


See \cref{e:meunier} for an explicit example of automorphism $f \in \Tame_q(\A^4)$ with dynamical degree equal to the golden ratio.

\section{The valuation complex}
\label{sec:valuations}

We introduce another metric space with a natural action of $\Tame(\A^n)$.
\subsection{Construction}

A valuation on the polynomial ring $\k[x_1, \dots, x_n]$ is a function
\[
\nu\colon \k[x_1, \dots, x_n] \to \R \cup \{+\infty\}
\]
such that
\begin{itemize}
\item $\nu(P_1+P_2) \ge \min\{ \nu(P_1), \nu(P_2)\}$ for all polynomials $P_1, P_2$.
\item $\nu(P_1 P_2) = \nu(P_1) + \nu(P_2)$ for all polynomials $P_1, P_2$.
\item $\nu(P) = 0$ if and only if $P$ is a nonzero constant.
\item $\nu(P) = +\infty$ if and only if $P = 0$.
\end{itemize}
If $\nu$ is a valuation and $\lambda$ a positive real number, then $\lambda \nu$ is still a valuation.
We denote by $\Vl_n$ the space of valuations up to positive scaling.
The group $\Aut(\A^n)$ acts on valuations via the formula
\[
(g \cdot \nu)(P) = \nu(P \circ g),
\]
where $P \circ g$ is the polynomial $P(g_1, \dots, g_n)$.
This action commutes with positive scalings, and so we get an action on the space $\Vl_n$.

Now we describe some particular valuations inside $\Vl_n$.
We denote by $\Pi$ the positive quadrant inside $\R^n$:
\[
\Pi = \{(\alpha_1, \dots, \alpha_n) \mid \forall i, \alpha_i >0 \}.
\]
Let $f = (f_1, \dots, f_n) \in \Aut(\A^n)$.
Given a polynomial $P \in \k[x_1, \dots, x_n] = \k[f_1, \dots, f_n]$, we can write
\[
P = \sum_{I = (i_1, \dots, i_n)} c_I f_1^{i_1} \dots f_n^{i_n}.
\]
We call support of $P$ (with respect to $f$) the finite set of multi-indices $I$ such that $c_I \neq 0$, and we define
\[
\nu_{f^{-1}, \alpha} = \min_{I \in \Supp(P)}\left( -\sum_{k=1}^n \alpha_k i_k\right).
\]
In particular, $-\nu_{\id,\alpha}(P)$ is the usual weighted degree of $P$, where each variable $x_i$ has weight $\alpha_i$.
By our choice of notation we have
\[
f \cdot \nu_{\id, \alpha} = \nu_{f, \alpha}.
\]
Observe that for $\lambda >0$ we have $\lambda \nu_{f,\alpha} = \nu_{f,\lambda\alpha}$.
In the sequel we always silently identify valuations under scaling.
Equivalently, one could insist that all weights are normalized, for instance by one of the condition $\sum_i \alpha_i = 1$ or $\prod_i \alpha_i = 1$. The additive normalization identifies the space of weights up to scaling to the standard simplex of dimension $n-1$, this is the convention which is most convenient to represent intersection between apartments, as on \cref{f:intersections}.
The multiplicative normalization is the most natural from the point of view of the metric we will put on the valuation space, see below \parref{sec:metric}.
However, when working with rational weights, we find more convenient not to apply any of the above normalizations, and to choose the $\alpha_i$ to be coprime integers.

We say that
\[
\Ap_f =\{ \nu_{f,\alpha} \mid \alpha \in \Pi\} \subset \Vl_n
\]
is the apartment associated with $f$.
The union of all $\Ap_f$, where $f$ runs over all elements of  $\Tame(\A^n)$, is called the valuation complex, denoted by $X_n$.
Observe that here we restrict to $f \in \Tame(\A^n)$; the definition would still make sense for $f \in \Aut(\A^n)$ but would not produce a connected space.

We denote by $\Pi^+ \subset \Pi$ the subset of weights $(\alpha_1, \dots, \alpha_n)$ such that $\alpha_1 \ge \cdots \ge \alpha_n$, and $\Ap^+_f \subset \Ap_f$ the set of valuations $\nu_{f,\alpha}$ with $\alpha \in \Pi^+$.
We say that $\Ap^+_f$ is the chamber associated with $f$.
Each apartment is the union of $n!$ chambers.
Given $\alpha \in \Pi$, we write $\alpha^+$ the unique element in $\Pi^+$ obtained by reordering the $\alpha_i$.

\begin{lemma}
\label{l:fund_domain}
\cite[2.4, 2.5]{LamyPrzytycki2021}
\begin{enumerate}
\item
If $\nu_{f,\alpha} = \nu_{g,\beta}$, then $\alpha^+ = \beta^+$.
\item
The standard chamber $\Ap_{\id}^+$ is a fundamental domain for the action of $\Tame(\A^n)$ on the valuation complex.
\end{enumerate}
\end{lemma}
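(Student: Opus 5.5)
For (1), I will read off from the valuation $\nu = \nu_{f,\alpha}$ an invariant that remembers exactly the multiset $\{\alpha_1,\dots,\alpha_n\}$ up to a common scaling. Since $\nu_{f,\alpha} = f\cdot\nu_{\id,\alpha}$, the valuation $\nu$ takes the value $-\alpha_k$ on the component $(f^{-1})_k$. More generally $-\nu$ is a monomial weighted degree in the coordinate system $f^{-1}$. So $\nu$ takes only nonpositive values on nonconstant polynomials, and its values on the coordinates are negative.

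The invariant is built recursively from $\nu$ alone. Let $m_1 = \max\{-\nu(P) : P \text{ a component}\}$ over all tame components $P$, or more intrinsically the infimum of $-\nu$ over all nonconstant $P$. A monomial weighted degree attains its minimum over nonconstant polynomials among the variables of smallest weight. Hence the infimum equals $\min_k\alpha_k$, so the smallest weight is recovered. To recover the others, I will consider the successive minima
\[
\mu_r(\nu) = \min\{\max_{i\le r}(-\nu(P_i)) \mid P_1,\dots,P_r \text{ algebraically independent}\}.
\]
For a monomial order, algebraic independence of $P_1,\dots,P_r$ forces their leading monomials (in the $f^{-1}$ coordinates) to involve at least $r$ distinct variables. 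This is a transcendence-degree argument on initial forms: the initial forms of independent elements can be chosen independent after a triangular change. It follows that $\mu_r$ is the $r$-th smallest $\alpha_k$. These quantities depend only on $\nu$, so $\nu_{f,\alpha}=\nu_{g,\beta}$ gives equal sorted weight vectors, that is $\alpha^+=\beta^+$. Scaling is compatible, since both sides scale together.

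The hard step is the claim about initial forms. My first attempt will use the graded algebra of $\nu$: its associated graded ring is the polynomial ring itself, graded by weights. I will then invoke the standard fact that independent elements of the filtered ring can be replaced, via a triangular process, by elements with independent initial forms without increasing degrees. If that is delicate, I will instead cite the equivalent statement that the weighted degree of the Jacobian/volume is bounded below.

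For (2), I will show two things: every valuation in $X_n$ is in the orbit of $\Ap^+_{\id}$, and no two distinct points of $\Ap^+_{\id}$ are in the same orbit. For the first, take $\nu_{f,\alpha}$ with $f$ tame. Let $\sigma$ be a permutation matrix with $\alpha\circ\sigma=\alpha^+$. Then $\nu_{f,\alpha}=\nu_{f\sigma,\alpha^+}=(f\sigma)\cdot\nu_{\id,\alpha^+}$, and $f\sigma$ is tame. For the second, suppose $g\cdot\nu_{\id,\alpha}=\nu_{\id,\beta}$ with $\alpha,\beta\in\Pi^+$. Then $\nu_{g,\alpha}=\nu_{\id,\beta}$, so part (1) gives $\alpha=\alpha^+=\beta^+=\beta$ up to scaling, and hence the two points of $\Ap^+_{\id}$ coincide.
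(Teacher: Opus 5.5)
Your reduction of (2) to (1) via a permutation matrix is correct. The successive minima $\mu_r(\nu)$ are a legitimate intrinsic invariant, and they do equal the weights $a_1\le\dots\le a_n$ of $\alpha$ listed in increasing order. (Your first formula for $m_1$ should be an infimum; a maximum over components is infinite.) The gap is in the step you single out as the hard one: the reason you give for $\mu_r\ge a_r$ is false, and the proposed repairs do not work.

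Write $u_k=(f^{-1})_k$. Algebraically independent elements need not have leading monomials involving $r$ distinct variables. With $\alpha=(1,1)$, the polynomials $P_1=u_1$ and $P_2=u_1^2+u_2$ generate $\k[u_1,u_2]$, yet their leading monomials $u_1$, $u_1^2$ involve only $u_1$.

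Nor can one in general pass, by a triangular process inside $\k[P_1,\dots,P_r]$, to elements with independent initial forms without raising degrees. Take $P_1=u_1^2$, $P_2=u_1^3+u_2$ with weights $(1,1)$ in characteristic zero. The elements of $\k[P_1,P_2]$ of degree at most $3$ are exactly $c_0+c_1u_1^2+c_2P_2$, and all their initial forms lie in $\k[u_1]$.

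The Jacobian fallback is not stated precisely. For arbitrary algebraically independent tuples it also fails in positive characteristic (e.g. $u_1^p$), whereas the valuation complex, and this lemma, are set up over an arbitrary field.

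What you are missing is that no initial-form argument is needed. The quantity $-\nu(P)$ bounds the weighted degree of \emph{every} monomial in the support of $P$, not only the leading one. So if $\max_{i\le r}(-\nu(P_i))<a_r$, no monomial of any $P_i$ involves a $u_k$ with $\alpha_k\ge a_r$. All the $P_i$ then lie in the subalgebra generated by the at most $r-1$ coordinates $u_k$ with $\alpha_k<a_r$, so $P_1,\dots,P_r$ are algebraically dependent. Together with the trivial upper bound given by the $r$ coordinates of smallest weight, this yields $\mu_r(\nu)=a_r$ over any field, and the rest of your proof goes through.

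An alternative, equally elementary route to (1) goes as follows. Reduce to $\nu_{\id,\beta}=\nu_{h,\alpha}$ with $h=g^{-1}f$, so that $-\nu_{\id,\beta}((h^{-1})_k)=\alpha_k$. The matrix of linear coefficients of the automorphism $h^{-1}$ is its Jacobian matrix at the origin, hence invertible. So some permutation $\sigma$ has $x_{\sigma(k)}$ occurring in $(h^{-1})_k$ for every $k$, which gives $\alpha_k\ge\beta_{\sigma(k)}$ for all $k$. By symmetry $\alpha^+=\beta^+$.
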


\subsection{Intersections of apartments}

Let $\alpha = (\alpha_1, \dots, \alpha_n) \in \Pi^+$.
We write
\[
\alpha = (\underbrace{\gamma_1, \dots, \gamma_1}_{m_1 \text{ times}}, \dots, \underbrace{\gamma_r, \dots, \gamma_r}_{m_r \text{ times}}),
\]
with $\gamma_1 > \dots > \gamma_r$, and $\sum_{i=1}^r m_i = n$.

We define
\[
L_\alpha \simeq \GL_{m_1}(\k) \times \dots \GL_{m_r}(\k) \subset \GL_n(\k) \subset \Tame(\A^n)
\]
to be the group of block diagonal matrices, with blocks of size $m_i$.
In particular, $L_\alpha$ contains the group of diagonal matrices.

For $1 \le i \le n$, we denote by $b(i)$ the integer such that $\alpha_i = \gamma_{b(i)}$.
Then we define $M_\alpha \subset \Tame(\A^n)$ as the group of triangular automorphisms of the form
\[
(x_1, \dots, x_n) \mapsto (x_1 + P_1, \dots, x_i + P_i, \dots, x_n + P_n)
\]
where each $P_i$ is a polynomial depending only on the variables $x_j$ with $b(j) > b(i)$, and with $\alpha_i \ge -\nu_{\id, \alpha}(P_i)$.
In particular, $P_n$ is always a constant, and $M_\alpha$ contains the group of translations.

\begin{proposition}
\label{p:stabilizer}
\cite[Proposition 3.2]{LamyPrzytycki2021}
Let $\alpha \in \Pi^+$.
Then the stabilizer of $\nu_{\id, \alpha}$ for the action of $\Tame(\A^n)$ on $X_n$ is the semidirect product $M_\alpha \rtimes L_\alpha$.
\end{proposition}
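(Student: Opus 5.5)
We prove the two inclusions separately. Throughout we use the unnormalized valuation $\nu_{\id,\alpha}$, so that $-\nu_{\id,\alpha}(P)$ is the $\alpha$-weighted degree $\deg_\alpha(P)$. Since the action of $\Tame(\A^n)$ on $\Vl_n$ is only defined up to scaling, we first check that an element $g$ fixing $\nu_{\id,\alpha}$ in $\Vl_n$ fixes it on the nose. If $g\cdot\nu_{\id,\alpha}=\lambda\nu_{\id,\alpha}$, then $g\cdot\nu_{\id,\alpha}=\nu_{g,\alpha}=\nu_{\id,\lambda\alpha}$. By \cref{l:fund_domain}(1) this gives $\alpha^+=(\lambda\alpha)^+$, so $\lambda=1$. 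Hence $g$ lies in the stabilizer if and only if
\[
\deg_\alpha(P\circ g)=\deg_\alpha(P)\quad\text{for all } P.
\]

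For the inclusion $M_\alpha\rtimes L_\alpha\subseteq\Stab$, it suffices to check generators.

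\emph{Elements of $L_\alpha$.} A block-diagonal linear map sends each $x_i$ to a combination of variables $x_j$ of the same weight $\gamma_{b(i)}$. It therefore maps each $\alpha$-homogeneous component to an $\alpha$-homogeneous component of the same degree, and is invertible on each such graded piece. So it preserves $\deg_\alpha$.

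\emph{Elements of $M_\alpha$.} Write $m=(x_1+P_1,\dots,x_n+P_n)$ with $\deg_\alpha(P_i)\le\alpha_i$. Then $\deg_\alpha(P\circ m)\le\deg_\alpha(P)$ for every $P$, since each monomial is sent to a polynomial of no larger weighted degree. The inverse of $m$ is again of the same form: it is a triangular map, solved successively from the variables of smaller weight upward, and the bounds are preserved. So the reverse inequality holds as well.

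That $L_\alpha$ normalizes $M_\alpha$ and $L_\alpha\cap M_\alpha=\{\id\}$ is a direct check; it uses that the conditions on $P_i$ depend only on the block of $i$.

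For the reverse inclusion, let $g=(g_1,\dots,g_n)$ fix $\nu_{\id,\alpha}$. Then $\deg_\alpha(g_i)=\deg_\alpha(x_i\circ g)=\alpha_i$. Let $\bar g_i$ denote the $\alpha$-homogeneous top part of $g_i$. The key step is to show that the $\bar g_i$ are algebraically independent, and more precisely that for every monomial $f^I$ in the $g_i$ the top part is $\bar g^I$ with no cancellation. This is exactly what $\deg_\alpha(P\circ g)=\deg_\alpha(P)$ gives: otherwise some $\alpha$-homogeneous $P\ne0$ would satisfy $\deg_\alpha(P(\bar g))<\deg_\alpha(P)$, which contradicts the equality applied to $P$.

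Now consider the map $\bar g=(\bar g_1,\dots,\bar g_n)$. It is a weighted-homogeneous endomorphism that is injective on each graded piece, since no cancellation occurs. Comparing dimensions of the finite-dimensional graded pieces, it is bijective on each piece, hence an automorphism of the graded algebra. The same argument applies to $g^{-1}$, which also fixes $\nu_{\id,\alpha}$.

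Next we analyze such graded automorphisms. Since $\bar g_i$ has weight $\gamma_{b(i)}$, it is a linear combination of the $x_j$ with $b(j)=b(i)$, plus monomials in the variables $x_j$ with $b(j)>b(i)$, which have smaller individual weights. Invertibility forces the linear block part $\ell\in L_\alpha$ to be invertible. We then set $m=g\circ\ell^{-1}$, or $\ell^{-1}\circ g$ depending on the composition convention, chosen so that the linear blocks become the identity. We must check that the remaining lower-weight terms and the lower-order parts satisfy the $M_\alpha$ condition $\deg_\alpha(P_i)\le\alpha_i$, with $P_i$ depending only on variables of strictly smaller weight. The bound holds by construction. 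The real point is to show that any term of $g_i$ involving a variable $x_j$ of the same or larger weight has already been absorbed into $\ell$. This follows because such a term must be linear in $x_j$ by weight, and, since its weight is at most $\alpha_i$, it either lies in the top part, and hence in $\ell$, or involves only variables of smaller weight.

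The main obstacle is the no-cancellation/graded-automorphism step. This is where the fact that $g$ fixes the valuation, rather than merely having components of the right degrees, is used, and also where one must check that non-top terms of the same block cannot appear.
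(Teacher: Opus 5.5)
Your proof is correct and is essentially the standard argument behind the cited result. The survey itself gives no proof, only the reference to Lamy--Przytycki. The argument runs: the stabilizer consists of the $g$ preserving $\deg_\alpha$; the equalities $\deg_\alpha(g_i)=\alpha_i$ force $g_i$ to be a linear combination of the $x_j$ with $\alpha_j=\alpha_i$ plus a polynomial of $\alpha$-degree at most $\alpha_i$ in variables of strictly smaller weight; and one then splits off the block-diagonal linear part.

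Two simplifications are available. First, the graded-automorphism detour is unnecessary: the Jacobian matrix of $g$ is block upper triangular with diagonal blocks those of $\ell$, and it has nonzero constant determinant, so $\ell\in L_\alpha$ at once. Second, if \cref{l:fund_domain}(1) is read only up to scaling, you can still get $\lambda=1$ directly. For $\lambda<1$ (replacing $g$ by $g^{-1}$ if needed) you would have $\deg_\alpha(g_n)=\lambda\alpha_n<\alpha_n$, which forces $g_n$ to be constant.
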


\begin{example}
\begin{enumerate}
\item If $\alpha = (1,\dots, 1)$ then $L_\alpha = \GL_n(\k)$ and $M_\alpha$ is the group of translations.
\item If $\alpha = (\alpha_1, \dots, \alpha_n)$ with $\alpha_1 > \dots > \alpha_n$, then $L_\alpha$ is the group of diagonal matrices and $M_\alpha$ is the group of triangular automorphisms $(x_1+P_1, \dots, x_n + P_n)$ with $\alpha_i \ge -\nu_{\id, \alpha}(P_i)$ for each $i$.
\end{enumerate}
\end{example}

\begin{proposition}
\label{p:intersection}
\cite[Corollary 2.7]{LamyPrzytycki2021}
Let $f \in \Tame(\A^n)$, and denote by $\Fix(f) \subset X_n$ the subset of valuations fixed by $f$.
Then
\[
\Ap^+_f \cap \Ap^+_\id = \Fix(f) \cap \Ap^+_\id.
\]
\end{proposition}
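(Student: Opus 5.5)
Both sides of the equality sit inside the standard chamber $\Ap^+_\id$, so I will show each inclusion separately.

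\emph{The inclusion $\supseteq$.} Let $\nu = \nu_{\id,\alpha}$ with $\alpha \in \Pi^+$, and assume $f \cdot \nu = \nu$. Then
\[
\nu = f \cdot \nu_{\id,\alpha} = \nu_{f,\alpha},
\]
and this valuation lies in the chamber $\Ap^+_f$ because $\alpha \in \Pi^+$. This inclusion is formal.

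\emph{The inclusion $\subseteq$.} Let $\nu \in \Ap^+_f \cap \Ap^+_\id$, so that $\nu = \nu_{\id,\alpha} = \nu_{f,\beta}$ with $\alpha, \beta \in \Pi^+$, up to scaling. Normalize the weights, say by $\sum_i \alpha_i = 1$ and $\sum_i \beta_i = 1$. By \cref{l:fund_domain}(1) we get $\alpha^+ = \beta^+$. Since both weights already lie in $\Pi^+$, this gives $\alpha = \beta$. Therefore
\[
f \cdot \nu_{\id,\alpha} = \nu_{f,\alpha} = \nu_{f,\beta} = \nu = \nu_{\id,\alpha},
\]
so $f$ fixes $\nu$, and $\nu \in \Fix(f) \cap \Ap^+_\id$.

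\emph{The one delicate point.} Identifying valuations under scaling could introduce a factor $\lambda$, namely $\nu_{\id,\alpha} = \lambda\,\nu_{f,\beta} = \nu_{f,\lambda\beta}$. This is absorbed by \cref{l:fund_domain}(1), which then gives $\alpha = \lambda\beta$, and the displayed computation goes through with $\lambda\beta$ in place of $\beta$. Equivalently, we may work throughout with normalized weights.

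All the substance therefore lies in \cref{l:fund_domain}(1), which is taken as given. Once weights are compared in $\Pi^+$, that lemma is exactly the statement that the chamber parametrization is injective up to reordering of the weights.
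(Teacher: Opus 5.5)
Your proof is correct and follows the same route as the source: the survey only cites \cite[Corollary 2.7]{LamyPrzytycki2021}, where the statement is obtained as a formal consequence of \cref{l:fund_domain}(1) together with the identity $f \cdot \nu_{\id,\alpha} = \nu_{f,\alpha}$. You also handle the scaling factor $\lambda$ correctly: \cref{l:fund_domain}(1) gives $\alpha = \lambda\beta$, because $\Pi^+$ is stable under positive scaling.
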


\begin{figure}
\centering
\includegraphics{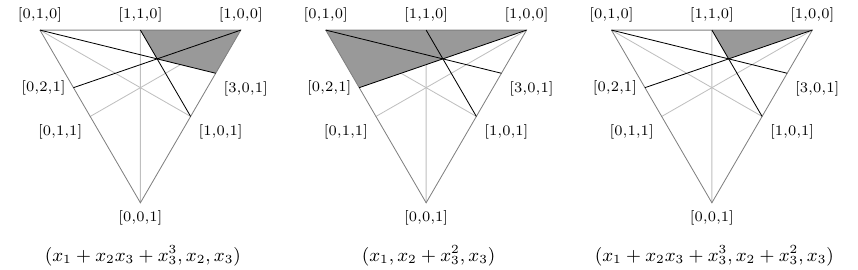}
\caption{Three examples of intersections $\Ap_\id \cap \Ap_f$.}
\label{f:intersections}
\end{figure}

Combining \cref{p:stabilizer,p:intersection} we can describe intersections of apartments.
For instance on \cref{f:intersections} the gray areas represent the intersection $\Ap_\id \cap \Ap_f$, for three examples of $f \in M_\alpha$ with $\alpha = (3,2,1)$.

\subsection{Metric}
\label{sec:metric}

Each apartment $\Ap_f$ is naturally parametrized by $\R^{n-1}$, by considering the logarithm of weights.
Precisely, to each
\[
\beta \in \left\{ (\beta_1, \dots, \beta_n) \in \R^n \mid \sum \beta_i = 0 \right\} \simeq \R^{n-1}
\]
we associate the valuation $\nu_{f,\alpha}$, where $\alpha_i = \exp(\beta_i)$.
By pushing the Euclidean metric from $\R^{n-1}$ we get a metric on each apartment $\Ap_f$.
Then we consider the induced pseudometric on the space $X_n$, defined as follows.
Given $\nu, \nu' \in X_n$, we take
\[
d(\nu, \nu') = \inf \sum d(\nu_i, \nu_{i+1}),
\]
where the infimum is taken over all sequences $(\nu_0, \dots, \nu_r)$ with $r \ge 0$, $\nu_0 = \nu$, $\nu_r = \nu'$, and $\nu_i, \nu_{i+1} \in \Ap_{f_i}$ for some $f_i \in \Tame(\A^n)$.

\begin{proposition}
\label{p:metric}
\cite[5.4 \& 5.8]{LamyPrzytycki2021}
The pseudometric $d(\cdot,\cdot)$ is a complete metric on $X_n$, and for each apartment $\Ap_f$ endowed with the Euclidean metric the inclusion $\Ap_f \subset X_n$ is an isometric embedding.
\end{proposition}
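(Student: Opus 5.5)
We have to show that $d$ is a complete metric on $X_n$ and that each inclusion $\Ap_f \subset X_n$ is isometric. We reduce everything to the standard chamber using \cref{l:fund_domain} and \cref{p:intersection}.

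\textbf{Step 1: a retraction onto the standard apartment.} This is the central construction. For each $f \in \Tame(\A^n)$ we want a map $\rho\colon X_n \to \Ap_\id$ that is $1$-Lipschitz on every apartment and restricts to the identity on $\Ap_\id$. A natural choice uses the Euclidean weighted degree. For $\nu \in X_n$ we record the vector $(-\nu(x_1),\dots,-\nu(x_n))$, normalised by $\prod_i = 1$, and take the logarithm; this gives a point of $\R^{n-1}$. On $\Ap_\id$ this map is the identity chart. We then need to show that on any other apartment $\Ap_g$ it is $1$-Lipschitz for the Euclidean metrics.

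On the chart $\beta \mapsto \nu_{g,e^\beta}$, each coordinate $-\nu(x_i)$ is a maximum of linear forms $\sum_k \alpha_k i_k$ over the support of $x_i \circ g^{-1}$. So $\log(-\nu(x_i))$ is a log-sum-exp type function of $\beta$, and its gradient is a convex combination of integer vectors $I$ with entries summing to $\deg$. I expect the $1$-Lipschitz estimate for this max-of-monomials map to be the main obstacle. Plain monotonicity fails, and the normalisation is where the difficulty sits.

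If the direct estimate fails, the fallback is a chamber-wise retraction. We define $\rho$ by sending $\Ap^+_g$ to $\Ap^+_\id$ via $\nu_{g,\alpha} \mapsto \nu_{\id,\alpha}$. This is well defined by \cref{l:fund_domain}(1), since $\alpha^+$ is intrinsic. It is an isometry on each chamber and continuous across walls. Composed with the folding $\alpha \mapsto \alpha^+$, it gives a $1$-Lipschitz map $X_n \to \Ap^+_\id$, because reordering coordinates is $1$-Lipschitz on $\R^{n-1}$. This already gives $d(\nu,\nu') \ge |\alpha^+ - \alpha'^+|$.

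\textbf{Step 2: isometric embedding.} To get the full inequality $d(\nu_{\id,\alpha},\nu_{\id,\alpha'}) \ge |\log\alpha - \log\alpha'|$, the folding of Step 1 is not enough. We use instead a retraction centred at a chamber: fix a chamber $\Ap^{+}_{\id}$ and the apartment $\Ap_\id$. For any apartment $\Ap_g$, \cref{p:intersection} and \cref{p:stabilizer} show that $\Ap_g \cap \Ap_\id$ is a union of convex pieces cut out by the fixed sets of triangular maps. We then try to prove the building-like axiom: any two chambers lie in a common apartment, and there is an element fixing $\Ap_g\cap \Ap_\id$ that maps $\Ap_g$ onto $\Ap_\id$. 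The elements $M_\alpha \rtimes L_\alpha$ provide this.

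With that in hand, the standard argument applies. Given a chain $\nu_0,\dots,\nu_r$ with consecutive points in common apartments, we retract each segment into $\Ap_\id$. Each piece has non-increasing length, so the total length is at least the Euclidean distance in $\Ap_\id$. Hence $d$ restricted to $\Ap_\id$ is Euclidean, and by equivariance the same holds for every $\Ap_f$.

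\textbf{Step 3: metric and completeness.} Positivity of $d$ follows from Step 1: if $d(\nu,\nu')=0$, apply an element sending $\nu$ into $\Ap^+_\id$. The retraction then forces $\nu'$ to coincide with $\nu$ in a common apartment, and Step 2 gives $\nu=\nu'$.

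For completeness, we take a Cauchy sequence and use the fundamental domain $\Ap^+_\id$. The images under the retraction converge in $\overline{\Pi^+}$. The limit must stay in $\Pi$, since $\alpha$ cannot escape to the boundary at finite distance because of the log coordinates. We lift the limit using that, near a fixed weight, the relevant stabilizers $M_\alpha\rtimes L_\alpha$ vary in a controlled way. Concretely, we would pass to a subsequence lying in a bounded union of apartments through a common point, where the space is locally a finite-dimensional polyhedral complex and hence complete.
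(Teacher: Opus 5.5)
Step~2 rests on a building axiom that is false for $X_n$, and the rest of the proposal does not recover from this.

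\textbf{Two chambers need not share an apartment.} By \cref{l:fund_domain}(1), an apartment $\Ap_h$ contains exactly one valuation whose weight is proportional to $(1,\dots,1)$, namely $\nu_{h,(1,\dots,1)}$. By \cref{p:stabilizer}, the stabilizer of $\nu_{\id,(1,\dots,1)}$ is the affine group $A_n$. Take any $g\notin A_n$, for instance $g=(x_1+x_2^2,x_2,\dots,x_n)$. Then $\nu_{\id,(1,\dots,1)}$ and $\nu_{g,(1,\dots,1)}$ are distinct points lying in no common apartment. Hence the chambers $\Ap^+_\id$ and $\Ap^+_g$ containing them lie in no common apartment either.

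So the chamber-centred retraction is undefined on most of $X_n$, and the ``standard argument'' does not apply. The groups $M_\alpha\rtimes L_\alpha$ cannot repair this: they fix $\nu_{\id,\alpha}$, so they only relate apartments passing through that one point.

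\textbf{What remains is too weak.} You are left with the folding $\nu_{g,\alpha}\mapsto\nu_{\id,\alpha^+}$ from Step~1, which is valid. Its bound $d(\nu,\nu')\ge|\log\alpha^+-\log\alpha'^+|$ is invariant under permuting coordinates. It therefore gives the isometry only for pairs in a common closed chamber. It can never separate $\nu_{\id,\alpha}$ from $\nu_{\id,w\alpha}$, which is the whole content of the statement beyond one chamber.

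\textbf{Your first Step~1 candidate fails for $n\ge3$.} Take $g=(x_1,x_2+x_1^2,x_3+x_2^2)$ and $\alpha$ near $(1,1,1)$. Then $(-\nu_{g,\alpha}(x_i))_i=(\alpha_1,2\alpha_1,2\alpha_2)$. In log coordinates your map is $\beta\mapsto\pi_0(\beta_1,\beta_1,\beta_2)$ plus a constant, where $\pi_0$ is the orthogonal projection onto $\{\sum\beta_i=0\}$. This stretches the direction $(1,-1,0)$ by the factor $\sqrt{4/3}>1$.

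The problem is not the normalisation but the Euclidean norm. Each component of your map is a maximum of functions whose gradients are probability vectors. So the map is $1$-Lipschitz for the $S_n$-invariant norm $\max_i\beta_i-\min_i\beta_i$, which yields only a bi-Lipschitz lower bound for the Euclidean metric.

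\textbf{Step~3 inherits these gaps.} From $d(\nu,\nu')=0$ the folding gives only $\alpha^+=\alpha'^+$, that is $\nu'\in\Tame(\A^n)\cdot\nu$. Such points typically share no apartment, exactly as in the example above. So ``the retraction forces $\nu'$ to coincide with $\nu$ in a common apartment'' is unjustified.

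The completeness sketch also fails as stated, for three reasons.
\begin{itemize}
\item Over an infinite field, $X_n$ is not locally finite: germs of chambers at $\nu_{\id,(1,\dots,1)}$ are permuted by $\GL_n(\k)$ like complete flags.
\item Walls are curved in log coordinates. For example, $(x_1+x_2x_3,x_2,x_3)$ fixes $\nu_{\id,\alpha}$ exactly when $\alpha_1\ge\alpha_2+\alpha_3$, and this boundary is $e^{\beta_1}=e^{\beta_2}+e^{\beta_3}$. So ``locally a finite-dimensional polyhedral complex, hence complete'' is not available off the shelf.
\item A Cauchy sequence need not lie in apartments through a single point.
\end{itemize}

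\textbf{What a proof needs.} The survey gives no proof of this statement; it cites \cite{LamyPrzytycki2021}. A proof must at least supply a global mechanism giving $d\ge$ Euclidean distance on $\Ap_\id$ that does not rely on common apartments. For positivity and completeness, a natural further ingredient is a local lemma: a small $d$-ball around $\nu$ is contained in the union of apartments through $\nu$. One would derive this from local finiteness of the walls near a fixed weight and from the inclusion of stabilizers of nearby points.
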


\subsection{Retraction to the coset complex}
\label{sec:retraction}

We describe a map from the valuation complex $X_n$ to the simplicial coset complex $\Cl_n$ defined in \parref{sec:Cn}.
Let $\Delta \subset \Pi^+$ be the $(n-1)$-simplex with vertices $w_1 = (2, \dots, 2, 1)$, $w_2 = (2, \dots, 2, 1,1)$, \dots, $w_n = (1,\dots, 1)$.
Given $\alpha \in \Pi^+$, we define a new weight $\alpha' \in \Delta$ by setting, for each $i = 1, \dots, n$:
\[
\alpha'_i = \min \{2, \alpha_i / \alpha_n \}.
\]
We identify $\Delta$ with the standard simplex in $\Cl_n$, by sending the weight
\[w_i = (\underbrace{2, \dots, 2}_{n-i}, \underbrace{1 , \dots 1}_i)\] to the vertex $[x_{n+1-i}, x_{n+2-i}, \dots, x_n]$.
We write $\iota\colon \Delta \to \Cl_n$ the corresponding injection.
Now let $\nu \in X_n$.
We can choose a representative $\nu = \nu_{f,\alpha}$ with $\alpha \in \Pi^+$.
Then we set $\pi(\nu) = f\cdot \iota(\alpha') \in \Cl_n$.
By \cite[Lemma 9.3]{LamyPrzytycki2021}, this yields a well-defined continuous surjective map $\pi\colon X_n \to \Cl_n$.

In the case $n = 2$, we get a map between two trees, as illustrated on \cref{f:retraction_n=2}.

\begin{figure}
\centering
\includegraphics{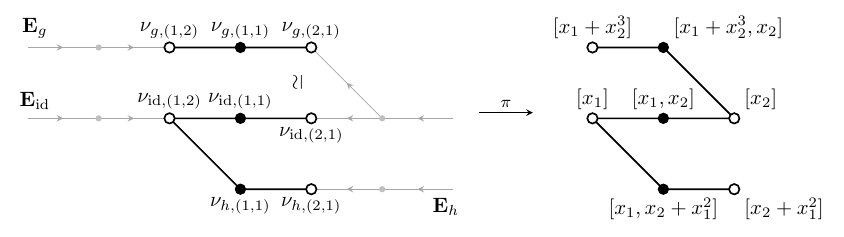}
\caption{The retraction map when $n = 2$, on the apartments associated with $\id$, $g = (x_1+x_2^3,x_2)$ and $h = (x_1, x_2+x_1^2)$.}
\label{f:retraction_n=2}
\end{figure}

On \cref{f:retraction} we illustrate the restriction map when $n = 3$.
One could expect that $\pi$ always induces a homotopy equivalence, for any number $n$ of variables.
Since we proved in \cref{p:C3-1-connected} that the coset complex $\Cl_3$ is simply connected, this would imply the same property for the valuation complex.
The details turn out to be a little more intricate, but using this idea and the amalgamated product structure from \cref{c:product} we can indeed prove:

\begin{proposition}
\label{p:X3_contractible}
\cite[Proposition 6.3]{LamyPrzytycki2021}
The space $X_3$ is simply connected.
\end{proposition}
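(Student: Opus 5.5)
We plan to show that $\pi_1(X_3)$ is trivial by reducing to the amalgamated product structure of \cref{c:product}, through a standard complex-of-groups argument. By \cref{l:fund_domain}, the standard chamber $\Ap^+_\id$ is a fundamental domain for the action of $\Tame(\A^3)$ on $X_3$. So $X_3$ is a quotient of $\Tame(\A^3)\times \Ap^+_\id$: we identify $(g,\nu)$ with $(g',\nu)$ whenever $g^{-1}g'$ fixes $\nu$. The chamber $\Ap^+_\id$ is contractible, being a convex cone in log-coordinates. We then use the general principle that if a group $G$ is generated by point stabilizers of a contractible fundamental domain, and is the colimit (amalgam) of these stabilizers along the face poset, then the developed space is simply connected. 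This is the same mechanism that makes a Bass--Serre tree a tree.

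The key steps are as follows.
\begin{enumerate}
\item Stratify $\Ap^+_\id$, the set of $\alpha=(\alpha_1\ge\alpha_2\ge\alpha_3)$ up to scaling, by the stabilizer $M_\alpha\rtimes L_\alpha$ from \cref{p:stabilizer}. The stabilizer depends only on the pattern of equalities and on which monomial inequalities $\alpha_i\ge -\nu_{\id,\alpha}(P_i)$ hold. It is therefore locally constant on the open strata, and grows when we specialise to smaller strata, since the closure relations reverse inclusions.
\item Deformation retract $\Ap^+_\id$ onto the simplex $\Delta$ via the map $\alpha\mapsto\alpha'$ of \parref{sec:retraction}. Along this retraction, stabilizers only increase, so the retraction is equivariantly compatible. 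It identifies the relevant stabilizers with those of the vertices $[x_3]$, $[x_2,x_3]$, $[x_1,x_2,x_3]$ of $\Cl_3$, up to relabelling of variables. For instance, at $w_3=(1,1,1)$ we get $A_3$, and at the other vertices we get the triangular-type groups appearing in \cref{c:product}.
\item Show that $X_3$ is simply connected by analysing a loop. Any loop in $X_3$ can be homotoped into a finite union of chambers $g\Ap^+_\id$, and hence into a gallery. Consecutive chambers in the gallery differ by an element fixing a common face. The loop then yields a word in stabilizers that is trivial in $\Tame(\A^3)$. \cref{c:product} says that every relation among the three vertex stabilizers is a consequence of relations inside each factor and of the identifications along pairwise intersections. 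Relations inside a single stabilizer correspond to loops staying in the star of a point, which is contractible because it is a cone. This lets us contract the loop step by step.
\end{enumerate}

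The main obstacle is step 3. The stabilizers at a general point of $\Ap^+_\id$ are not just the three groups of \cref{c:product}. The inequalities $\alpha_i\ge-\nu(P_i)$ cut the chamber into infinitely many walls, so there are infinitely many strata with distinct groups $M_\alpha$. We must check that the union of all these stabilizers, glued along intersections, has the same colimit as the three-factor amalgam. We would first try to prove that each $M_\alpha\rtimes L_\alpha$ is generated inside a single vertex stabilizer of $\Cl_3$ through $\pi$, namely the stabilizer of $\pi(\nu_{\id,\alpha})$, and that the stabilizers increase monotonically along the retraction rays. Then any loop can be pushed by $\pi$-compatible homotopies into the preimage of the $1$-skeleton over the standard simplex, where \cref{p:C3-1-connected} or \cref{c:product} applies. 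Verifying this monotonicity for weights with $\alpha_1/\alpha_3>2$ needs care: a polynomial $P_i$ allowed at $\alpha$ must still be allowed at $\alpha'$.
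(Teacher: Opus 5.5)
Your ingredients are the ones the paper points to: the retraction $\pi$ and Wright's amalgam \cref{c:product}. However, the step meant to transport Wright's presentation to the chamber $\Ap^+_\id$ fails.

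Stabilizers do not increase along $\alpha\mapsto\alpha'$; they decrease. For $\alpha=(3,1,1)$ we have $\alpha'=(2,1,1)$. The map $g=(x_1+x_3^3,x_2,x_3)$ lies in $M_\alpha$ but not in $M_{\alpha'}\rtimes L_{\alpha'}$, since $x_3^3$ has weighted degree $3>2$. This is exactly the condition you flag at the end, and it is false. Hence $\nu_{f,\alpha}\mapsto\nu_{f,\alpha'}$ is not even a well-defined self-map of $X_3$: the point $\nu_{\id,\alpha}=\nu_{g,\alpha}$ would be sent both to $\nu_{\id,\alpha'}$ and to $g\cdot\nu_{\id,\alpha'}\neq\nu_{\id,\alpha'}$. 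So it is certainly not an equivariant deformation retraction onto $\Tame(\A^3)\cdot\Delta$.

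The map $\pi$ is well defined only because it lands in $\Cl_3$, whose simplex stabilizers are much larger than point stabilizers in $X_3$. In $X_3$, the stabilizers of $\nu_{\id,w_1}$ and $\nu_{\id,w_2}$ are finite-dimensional groups (polynomial parts of degree at most $2$), not $\Stab([x_3])$ and $\Stab([x_2,x_3])$. So the identification in your step 2 is wrong. For the same reason, your claim in step 3 that relations inside a factor of the amalgam come from loops in the star of a single point holds for $A_3$. It also holds for $\Stab([x_2,x_3])$, which is an increasing union of the stabilizers along the ray $\alpha_2=\alpha_3$. It fails for $\Stab([x_3])$.

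What is missing is a proof that the part of $X_3$ lying over the type $1$ vertex $[x_3]$ is simply connected. This amounts to showing that $\Stab([x_3])=\{(f_1,f_2,ax_3+b)\}$ is the colimit of the infinitely many groups $M_\alpha\rtimes L_\alpha$ over the region $\{\alpha_2\ge 2\alpha_3\}$ of $\Ap^+_\id$ that $\pi$ sends to $[x_3]$.

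Over the other vertices, the open edges and the open triangle of $\Delta$, the stabilizers form directed families whose union is the corresponding stabilizer in $\Cl_3$, so there the colimit is harmless. Over $[x_3]$ it is not: the union is not even a group. The element $(x_1,x_2+x_3x_1,x_3)\in\Stab([x_3])$ lies in no $M_\alpha\rtimes L_\alpha$ with $\alpha\in\Pi^+$. Yet it equals $\sigma h\sigma$ with $\sigma=(x_2,x_1,x_3)$ and $h=(x_1+x_3x_2,x_2,x_3)$, and both of these fix points of that region.

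Computing this colimit is a genuinely two-dimensional statement over $\k[x_3]$: it requires an amalgam-type presentation of the tame automorphisms of the plane over $\k[x_3]$. Already for the subgroup $\GL_2(\k[x_3])$ acting on $(x_1,x_2)$ it is essentially Nagao's theorem. This is not provided by \cref{c:product}, which uses $\Stab([x_3])$ only as a black-box factor. It is the kind of extra input hidden behind the paper's remark that the details are ``a little more intricate''.

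A secondary point: infinitely many chambers meet at some points of $X_3$ (for instance at $\nu_{\id,(1,1,1)}$). So reducing an arbitrary loop in the metric topology to a finite gallery also needs an argument.
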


\begin{figure}[ht]
\centering
\includegraphics{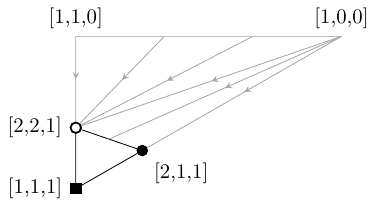}
\caption{The retraction map when $n=3$, in restriction to $\Ap_{\id}^+$.}
\label{f:retraction}
\end{figure}

\subsection{Negative curvature}
\label{sec:neg_curvature}

\begin{proposition}
\label{p:courbure_negative}
\cite[Proposition 7.1]{LamyPrzytycki2021}
Let $\alpha \in \Pi$, and $g \in \Tame(\A^3)$.
Then $\nu_{g,[\alpha]}$ admits a $\CAT(0)$ neighborhood in $X_3$.
\end{proposition}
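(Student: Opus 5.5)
By the transitivity given in \cref{l:fund_domain}, I may translate by $g^{-1}$ and assume $g = \id$ and $\alpha \in \Pi^+$. The point $\nu = \nu_{\id,\alpha}$ lies in the interior of a chamber, on a wall, or on the intersection of two walls (the ray $\alpha = (1,1,1)$). Each apartment is a Euclidean plane by \cref{p:metric}, and $X_3$ is a union of such planes glued along closed convex subsets (\cref{p:intersection}). So a small ball around $\nu$ is isometric to a Euclidean cone over the link $\operatorname{Lk}(\nu)$, which is a metric graph whose edges are arcs of directions. By the standard criterion for $2$-dimensional polyhedral complexes (Gromov's link condition, \cite[II.5]{BridsonHaefliger}), it suffices to show that every locally injective loop in $\operatorname{Lk}(\nu)$ has length at least $2\pi$.

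\textbf{Case 1: $\nu$ in the interior of a chamber.} Then $\alpha_1 > \alpha_2 > \alpha_3$. Every apartment through $\nu$ contains a neighborhood of $\nu$ in $\Ap_\id$, because $\Fix(f)$ contains an open set around $\nu$. Hence the neighborhood is flat.

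\textbf{Case 2: $\nu$ on a wall.} Say $\alpha_1 = \alpha_2 > \alpha_3$, the other wall being similar. The stabilizer is $M_\alpha \rtimes L_\alpha$ by \cref{p:stabilizer}. The link is then a graph with vertices the germs of half-walls through $\nu$, and edges of length $\pi$ given by the half-planes (germs of chambers). Such a link is a bipartite-free graph of two vertices with many edges of length $\pi$, so every loop has length at least $2\pi$.

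\textbf{Case 3: $\nu$ on the singular ray $(1,1,1)$.} This is the vertex situation. The stabilizer is the affine group $A_3$. The link of $\nu$ in $X_3$ is the spherical building of $\GL_3(\k)$, namely the incidence graph of $\P^2_\k$ with edges of length $\pi/3$. Its girth is $6$, so loops have length at least $2\pi$.

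\textbf{Case 4: the remaining rays of $\Pi^+$.} More exotic behaviour appears along the non-generic rays, in particular the walls with $\alpha_1 > \alpha_2 = \alpha_3$ at special rational slopes where $M_\alpha$ jumps, e.g.\ $\alpha_1 = d\alpha_2$. Here apartments meet along the piecewise-linear loci of \cref{f:intersections}, so they are not half-planes. The link of $\nu$ then has extra vertices corresponding to directions along which $\Fix(f) \cap \Ap_\id$ bends. For these points I would compute the link explicitly, using $M_\alpha \rtimes L_\alpha$ and \cref{p:intersection} to describe which germs of chambers are identified. The expected picture mirrors \cref{l:link_type1_C3} and \cref{l:link_type1}: a projection from the link onto a tree, the Bass--Serre tree of $\Aut(\A^2)$ over $\k(x_3)$ or a variant, with fibres forming complete bipartite graphs. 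A loop in the link either projects to a backtracking path in the tree, forcing it to traverse at least two fibre edges of total length $\ge 2\pi$, or it stays within a single fibre, where bipartiteness forces length at least $4$ times the edge angle.

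\textbf{Main obstacle.} Case 4 is where I expect the difficulty. The angles contributed by the bent intersection loci must be measured in the logarithmic metric, and one must check they are large enough. I would first treat the lattice of rational weights $\alpha = (a, b, c)$ with $a = b + k c$-type relations, where new monomials $x_2^i x_3^j$ become admissible in $P_1$. I would verify that the minimal angle between distinct branches is at least $\pi/2$ (or $\pi/3$ with correspondingly longer cycles), using \cref{l:fund_domain} to reduce to a single chamber.
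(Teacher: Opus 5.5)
Your Case~1 is false, and the content of the proposition lies exactly there. By \cref{p:stabilizer}, $M_\alpha$ contains the triangular automorphisms with $\alpha_i\ge-\nu_{\id,\alpha}(P_i)$ \emph{including equality}, and in the equality case $\nu$ lies on the boundary of $\Fix(f)\cap\Ap_\id$.

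For instance, $\alpha=(5,2,1)$ is in the open chamber and $f=(x_1,x_2+x_3^2,x_3)$ fixes $\nu_{\id,\alpha}$. For nearby $\alpha'$, however, $f$ fixes $\nu_{\id,\alpha'}$ only if $\alpha'_2\ge 2\alpha'_3$, so $\Ap_f$ branches off $\Ap_\id$ at $\nu$. The open chamber is therefore crossed by the locally finite family of loci $\alpha_1=i\alpha_2+j\alpha_3$ and $\alpha_2=j\alpha_3$. The model point $\nu_{\id,[m,p,1]}$, $m>p>1$, of \cref{e:local_curvature} is an interior point of $\Ap_\id^+$. Your Case~2 fails for the same reason at wall points with $\alpha_1/\alpha_3\in\Z$.

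Your reduction to the link condition is also unjustified. In the logarithmic metric, the locus $e^{\beta_1}=ie^{\beta_2}+je^{\beta_3}$ with $i,j>0$ is not a geodesic. In \cref{l:angulaire} only $c_1$ and $c_4$ are lines, while $c_3$ and $c_2$ (with $k=bp$) are curved; they bound $\Fix(h_1)\cap\Ap_\id$ and $\Fix(h_3)\cap\Ap_\id$ near $\nu$. So a ball around such a $\nu$ is not a Euclidean cone over its link, and the criterion for Euclidean polyhedral complexes does not apply.

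For gluings along curves, the link condition alone is not sufficient. Two flat disks glued along their boundary circle have links that are circles of length $2\pi$, yet they have non-unique geodesics between nearby seam points. You would also need to use that these loci bound regions $\{\sum_k j_k e^{\beta_k-\beta_i}\le 1\}$, which are convex in log coordinates.

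Your Case~4 is a plan rather than a proof, and the estimate it aims at is false. At $\nu=\nu_{\id,[m,p,1]}$, the relation $h_0h_1h_2h_3=\id$ of \cref{e:local_curvature} gives a loop in the link formed by four arcs of lengths $\theta_a,\theta_b,\theta_a,\theta_b$. In the notation of \cref{l:angulaire} with $k=bp$, one has $\theta_a=\pi/3+\theta_{12}$ and $\theta_b=\pi/3+\theta_{13}$. Hence $\theta_a<\pi/2$ as soon as $a>b$, and $\theta_a$ tends to $\pi/3$ when $b$ is fixed and $a$ grows. So there is no lower bound $\pi/2$ on arcs in $4$-cycles, and bipartite fibres with arcs of length at least $\pi/3$ only give $4\pi/3$.

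This loop has length exactly $2\pi$, so no crude estimate can work. What is needed is the exact identity $\theta_a+\theta_b=\pi$, which the paper obtains from \cref{l:angulaire}. The involution $\tau\colon[\alpha_1,\alpha_2,\alpha_3]\mapsto[\alpha_1,p\alpha_3,\alpha_2/p]$ is a reflection for the log metric, and its axis $\alpha_2=p\alpha_3$ is the branching line of $h_0$. It exchanges $c_1$ with $c_4$ and $c_2$ with $c_3$, whence $\theta_{12}=\theta_{34}$, that is, $\theta_{12}+\theta_{13}=\pi/3$.

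The missing ideas are this symmetry argument and a systematic identification of the loops in the link through relations among automorphisms fixing $\nu$. The tree-with-complete-bipartite-fibres picture you import from \cref{l:link_type1_C3,l:link_type1} is not established for $X_3$. Even if it were, arcs of varying length mean it would not give $2\pi$.
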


The proof of this proposition is the most technical part of the paper \cite{LamyPrzytycki2021}.
The following example illustrates one of the main ingredient in the proof.

\begin{example}
\label{e:local_curvature}
Let $p \ge 1$ and $a \ge b$ be integers, and set $m = (a+b)p$.
The following two polynomials are homogenous of degree $m$ with respect to the variables $x_2, x_3$ with weighted degree $p$ and $1$:
\begin{align*}
P_1(x_2, x_3) &= (x_2-x_3^p)^a x_2^b, &
P_3(x_2, x_3) &= -x_2^a(x_2+x_3^p)^b.
\end{align*}
Then we consider the following elementary automorphisms:
\begin{align*}
h_0 &= (x_1, x_2-x_3^p, x_3), &
h_1 &= (x_1 + P_1(x_2, x_3), x_2, x_3), \\
h_2 &= (x_1, x_2+x_3^p,x_3), &
h_3 &= (x_1 + P_3(x_2,x_3), x_2, x_3).
\end{align*}
We compute
\begin{align*}
h_0 h_1 &= (x_1 + P_1(x_2, x_3), x_2-x_3^p, x_3) \\
h_2 h_3 &= (x_1 + P_3(x_2, x_3), x_2 + x_3^p, x_3).
\end{align*}
Since $P_1(x_2+x_3^p, x_3) = -P_3(x_2, x_3)$, we obtain
\[
h_0h_1h_2h_3 = \id.
\]
Observe that each $h_i$ fixes the valuation $\nu = \nu_{\id, [m,p,1]}$.
We consider the loop in the link of $\nu$ consisting of four arcs in the apartments $\Ap_{h_0}$, $\Ap_{h_0h_1}$, $\Ap_{h_0h_1h_2} = \Ap_{h_3^{-1}}$, $\Ap_{h_0h_1h_2h_3} = \Ap_{\id}$. This gives a total angular length $\theta_a + \theta_b + \theta_a + \theta_b$, where $\frac{\pi}3 \le \theta_a \le \theta_b \le \frac{2\pi}{3}$, see \cref{f:local_curvature}.
To get nonpositive curvature we need to show $2(\theta_a + \theta_b) \ge 2\pi$, or equivalently $(\theta_a - \frac\pi3) + (\theta_b - \frac\pi3) \ge \frac\pi3$.
In fact in this situation we have an equality, by \cref{l:angulaire} below.
\end{example}

\begin{figure}
\centering
\includegraphics{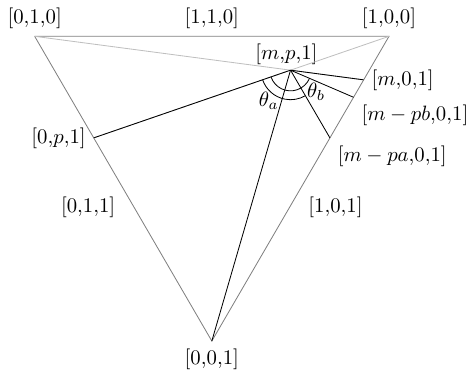}
\caption{Arc pieces in \cref{e:local_curvature}.}
\label{f:local_curvature}
\end{figure}

\begin{lemma}
\label{l:angulaire}
Let $m, p, k$ be integers with $m > p \ge 1$ and $m \ge 2k \ge 0$.
Let $c_1, c_2, c_3, c_4$ be the half-lines from $[m,p,1]$ to respectively $[0,0,1]$, $[k,0,1]$, $[m-k,0,1]$, $[m,0,1]$: see \cref{f:lemme_angulaire}.
For $1 \le i < j \le 4$, denote by $\theta_{ij}$ the angle at the point $[m,p,1]$ between the curves $c_i$ and $c_j$, for the metric $|\cdot,\cdot|$.
Then $\theta_{12} = \theta_{34}$, or equivalently $\theta_{12} + \theta_{13} = \pi/3$.
\end{lemma}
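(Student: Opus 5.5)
The plan is to compute the tangent directions of the four curves at the common point $[m,p,1]$ in the logarithmic chart of the apartment, and then exhibit a Euclidean reflection of the tangent plane that exchanges them pairwise. I will take the half-lines $c_i$ to be straight segments in the weight coordinates, i.e.\ $t \mapsto (m,p,1) + t\bigl((k,0,1)-(m,p,1)\bigr)$, representing each class with third coordinate $1$; only their tangent vectors at $t=0$ matter for angles. I will also use that the metric $|\cdot,\cdot|$ is the Euclidean metric pushed forward from the logarithms of the weights (\parref{sec:metric}). Concretely, the tangent space at a point is $\R^3/\R(1,1,1)$. Identifying it with the sum-zero plane, the inner product is
\[
\langle u, v\rangle = u\cdot v - \tfrac13 \Bigl(\textstyle\sum u_i\Bigr)\Bigl(\textstyle\sum v_i\Bigr).
\]

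First I compute the tangent vectors. Since $\frac{d}{dt}\log \alpha_i = \dot\alpha_i/\alpha_i$, the curve towards $[k',0,1]$ has velocity $(k'-m,-p,0)$ at $t=0$. Dividing componentwise by $(m,p,1)$ gives the log-tangent vector $u(s) = (s,-1,0)$ with $s = (k'-m)/m \in [-1,0]$. The four curves $c_1,\dots,c_4$ therefore correspond to the parameters
\[
s_1=-1,\quad s_2=\tfrac km-1,\quad s_3=-\tfrac km,\quad s_4=0 .
\]
The hypothesis $m \ge 2k \ge 0$ guarantees $s_1\le s_2\le s_3\le s_4$. So all four tangent vectors lie, in this order, on one affine segment in the tangent plane, from $u(-1)=(-1,-1,0)\equiv(0,0,1)$ to $u(0)=(0,-1,0)$.

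Next I check the endpoints. Projecting to the sum-zero plane gives $(-\frac13,-\frac13,\frac23)$ and $(\frac13,-\frac23,\frac13)$. Both have squared norm $\frac23$ and their inner product is $\frac13$. Hence the endpoints have equal length and $\theta_{14}=\pi/3$. This already yields the equivalence in the statement. The four rays are coplanar and ordered, so $\theta_{12}+\theta_{23}+\theta_{34}=\theta_{14}=\pi/3$ and $\theta_{13}=\theta_{12}+\theta_{23}$. Therefore $\theta_{12}=\theta_{34}$ if and only if $\theta_{12}+\theta_{13}=\pi/3$.

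Finally, for $\theta_{12}=\theta_{34}$ I use the reflection $R$ of the tangent plane across the bisector of the rays through $u(-1)$ and $u(0)$. This is a linear isometry. Because the two endpoints have equal norm, $R$ swaps them, so it preserves the segment joining them and reverses its affine parameter, i.e.\ $R(u(s)) = u(-1-s)$. Since $-1-s_1 = s_4$ and $-1-s_2 = s_3$, $R$ maps the pair of rays $(c_1,c_2)$ onto $(c_4,c_3)$, and hence $\theta_{12}=\theta_{34}$.

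The only delicate point I expect is the first step. I must make sure the angle is computed for the correct metric, namely the log chart and not the additive simplex normalization. I must also make sure the curves are interpreted so that their tangent vectors are the ones computed above. Once that identification is fixed, everything reduces to the equal-norm check on the endpoints, after which the reflection argument is immediate.
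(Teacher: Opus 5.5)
Your proof is correct, but it argues infinitesimally where the paper uses a global symmetry.

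The paper introduces the involution $\tau[\alpha_1,\alpha_2,\alpha_3]=[\alpha_1,p\alpha_3,\alpha_2/p]$ of the whole weight space. In logarithmic coordinates $\tau$ is an axial symmetry fixing $[m,p,1]$. The paper then uses projective collinearities to see that $\tau$ permutes the lines carrying the $c_i$: $\tau[0,0,1]=[0,1,0]$ lies on the line through $[m,p,1]$ and $[m,0,1]$, and $\tau[k,0,1]=[k,p,0]$ lies on the line through $[m,p,1]$ and $[m-k,0,1]$. The value $\theta_{14}=\pi/3$ comes from the observation that $c_1$ and $c_4$ are genuine geodesics of $|\cdot,\cdot|$, because their lines pass through corners of the simplex.

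You instead compute the log-tangent vectors $u(s)=(s,-1,0)$ directly. You note that they are affinely parametrized, with endpoints of equal norm at angle $\pi/3$, and you reflect across the bisector.

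Your $R$ is exactly $-d\tau$ at $[m,p,1]$: one checks that $d\tau(u(s))=-u(-1-s)$ modulo $(1,1,1)$. This reflects the fact that $[m,p,1]$ lies between $[m,0,1]$ and $[0,1,0]$, and between $[m-k,0,1]$ and $[k,p,0]$. So $\tau$ actually sends $c_1$ (resp.\ $c_2$) onto the ray opposite to $c_4$ (resp.\ $c_3$), not onto $c_4$ (resp.\ $c_3$) itself. The paper's phrase ``exchanges $c_1$ with $c_4$'' is accurate for the full lines, and the angle equality survives because the antipodal map of the tangent plane is an isometry. Your reflection swaps the rays directly, so this issue never arises.

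Each route buys something different. Yours needs no clever choice of involution. It also makes explicit the ordering of the four directions, which is where $m\ge 2k\ge 0$ enters. That ordering underlies the decomposition $\theta_{14}=\theta_{12}+\theta_{23}+\theta_{34}$, which the paper writes down without comment. The paper's route is more conceptual: it shows the symmetry is a global isometry of the apartment rather than an accident of first-order data at one point.
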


\begin{proof}
Since $[0,1,0]$ is collinear with $[m,p,1]$ and $[m,0,1]$, by definition of the metric $c_1$ and $c_4$ are still lines with respect to the metric $|\cdot,\cdot|$ and $\theta_{14}=\frac{\pi}{3}$.
The fact that the two conclusions are equivalent then comes from the equality
\[
\frac{\pi}{3} = \theta_{14}
=  \theta_{12} +  \theta_{23} + \theta_{34} = \theta_{12} + \theta_{13} + (\theta_{34} - \theta_{12}).
\]

Consider the involution on the space of weights $\Pi$ given by
\[
\tau\colon [\alpha_1, \alpha_2, \alpha_3] \mapsto [\alpha_1, p \alpha_3, \alpha_2 / p].\]
This involution fixes the line $\{ [t, p, 1] \mid t \ge 0\}$, which contains $[m,p,1]$.
Moreover, $\tau [0,0,1] = [0,1,0]$ which is collinear with $[m,p,1]$ and $[m,0,1]$, and
$\tau [k,0,1] = [k,p,0]$ which is collinear with $[m,p,1]$ and $[m-k,0,1]$.
In particular, $\tau$ exchanges $c_1$ with $c_4$, and also $c_2$ with $c_3$.

At the level of $\beta_i = \log \alpha_i$, the involution $\tau$ becomes:
\[
(\beta_1, \beta_2, \beta_3) \mapsto (\beta_1, \beta_3 +
\log p, \beta_2 - \log p).
\]
Thus, for the metric $|\cdot,\cdot|$ the involution $\tau$ is an axial symmetry, with axis $\beta_2 - \beta_3 = \log p$.
In particular, it preserves the non-oriented angles and we conclude $\theta_{12} = \theta_{34}$.
\end{proof}

\begin{figure}
\centering
\includegraphics{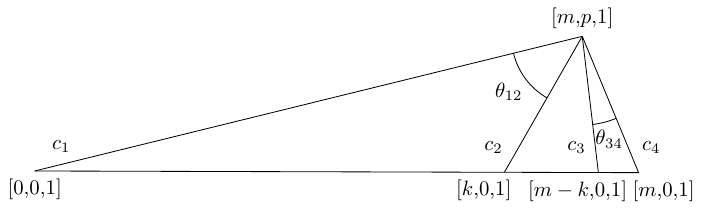}
\caption{Setting of \cref{l:angulaire}.}
\label{f:lemme_angulaire}
\end{figure}

As a consequence of Cartan--Hadamard's \cref{t:CA}, from \cref{p:X3_contractible,p:courbure_negative} we obtain:

\begin{theorem}
\label{t:X3_CAT0}
Over a field $\k$ of characteristic zero, the space $X_3$ is a $\CAT(0)$ complete metric space.
\end{theorem}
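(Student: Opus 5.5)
The plan is to apply Cartan--Hadamard's \cref{t:CA}, exactly as was done for the square complex in \cref{t:CAT0}, with completeness handled separately.

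First, completeness. By \cref{p:metric}, the pseudometric $d$ is a complete metric on $X_3$, and each apartment embeds isometrically. I also need $X_3$ to be a geodesic space, since \cref{t:CA} is stated for geodesic metric spaces. I would obtain this from the structure of $X_3$ as a union of Euclidean apartments with the induced length metric. Concretely, $d$ is by definition a length pseudometric built from chains inside apartments. Combined with completeness and local compactness issues, this should give geodesics.

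Local compactness fails, however, since links are infinite (the fields may be infinite). So I would not use Hopf--Rinow. Instead I would rely on the version of Cartan--Hadamard in \cite{BridsonHaefliger} for complete length spaces. The hypotheses of that version are: the space is complete and connected, it is locally $\CAT(0)$ for the length metric, and it is simply connected. Its conclusion is that the universal cover, here $X_3$ itself, is $\CAT(0)$, and in particular uniquely geodesic. This is the step where I expect the main subtlety: checking that $d$ is indeed a length metric, in the sense that the infimum over chains in apartments equals the infimum of lengths of paths. That follows because within each apartment the Euclidean distance is realized by a segment, so any chain yields a concatenated path of the same length.

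Second, the local condition. \cref{p:courbure_negative} says every point $\nu_{g,[\alpha]}$ admits a $\CAT(0)$ neighborhood. By \cref{l:fund_domain}, every point of $X_3$ is of this form, since $X_3$ is the union of the apartments $\Ap_g$ for $g \in \Tame(\A^3)$. So $X_3$ is locally $\CAT(0)$.

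Third, the global condition. \cref{p:X3_contractible} gives simple connectedness. Applying Cartan--Hadamard then yields that $X_3$ is $\CAT(0)$, and together with completeness from \cref{p:metric} we obtain a complete $\CAT(0)$ space. The characteristic zero assumption enters only through \cref{p:X3_contractible} and \cref{p:courbure_negative}, which rely on the Shestakov--Umirbaev reduction theory via \cref{c:product}.
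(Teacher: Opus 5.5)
Your proposal is correct and follows the paper's own argument: local $\CAT(0)$ from \cref{p:courbure_negative}, simple connectedness from \cref{p:X3_contractible}, then Cartan--Hadamard, with completeness from \cref{p:metric}. Your extra step is a sound refinement of a point the paper's one-line deduction passes over: checking that $d$ is a length metric lets you use the complete-metric-space version of Cartan--Hadamard from \cite{BridsonHaefliger}, so that geodesicity becomes a conclusion rather than a hypothesis.
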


\subsection{Finite subgroups}
\label{sec:finite_groups}

We obtain a positive answer to \cref{q:linearization} in dimension $n = 3$:

\begin{proposition}
\label{p:linearizable_A3}
Let $\k$ be a field of characteristic zero, and $F \subset \Tame(\A^3)$ a finite subgroup.
Then $F$ is linearizable, meaning there exists $\phi \in \Tame(\A^3)$ such that $\phi F \phi^{-1} \subset \GL_3(\k)$.
\end{proposition}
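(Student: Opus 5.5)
The plan is to mimic the proof of \cref{p:linearizable_A2}, with the Bass--Serre tree replaced by the valuation complex $X_3$. By \cref{t:X3_CAT0}, $X_3$ is a complete $\CAT(0)$ space on which $\Tame(\A^3)$ acts by isometries. So \cref{l:circumcenter} shows that the finite group $F$ fixes some valuation $\nu \in X_3$.

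The first step is to move this fixed point into the standard chamber. By \cref{l:fund_domain}, there are $f \in \Tame(\A^3)$ and $\alpha \in \Pi^+$ with $\nu = \nu_{f,\alpha} = f\cdot \nu_{\id,\alpha}$. Then $f^{-1} F f$ fixes $\nu_{\id,\alpha}$. By \cref{p:stabilizer}, this means $f^{-1}Ff \subset M_\alpha \rtimes L_\alpha$, where $L_\alpha \subset \GL_3(\k)$ is a block diagonal group.

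The second step is to apply \cref{l:abstract_linearization} to $G = M_\alpha \rtimes L_\alpha$, viewed as a group of bijections of $\k^3$. For this, $M_\alpha$ must be closed under averaging. An element of $M_\alpha$ has the form $(x_1 + P_1, x_2+P_2, x_3+P_3)$, where:
\begin{itemize}
\item each $P_i$ depends only on the variables $x_j$ with $b(j) > b(i)$;
\item each $P_i$ satisfies $-\nu_{\id,\alpha}(P_i) \le \alpha_i$.
\end{itemize}
Averaging $r$ such maps gives $(x_1 + \bar P_1, x_2 + \bar P_2, x_3 + \bar P_3)$ with $\bar P_i = \frac1r\sum P_i^{(k)}$. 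Here we use characteristic zero to divide by $r$. Each $\bar P_i$ still depends only on the allowed variables. It also still satisfies the weighted degree bound, by the ultrametric inequality $\nu(P+Q)\ge\min\{\nu(P),\nu(Q)\}$ together with $\nu(cP)=\nu(P)$ for $c\in\k^*$. Because of the variable restriction, such a map is triangular with respect to the block order, so it is invertible and lies in $M_\alpha$. Hence $M_\alpha$ is stable under means.

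\cref{l:abstract_linearization} then gives $m \in M_\alpha$ with $m f^{-1} F f m^{-1} \subset L_\alpha \subset \GL_3(\k)$. Setting $\phi = m f^{-1} \in \Tame(\A^3)$ concludes the proof.

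The substantive input is entirely contained in \cref{t:X3_CAT0} and \cref{p:stabilizer}, which we may assume. The only point requiring care is checking that the mean of elements of $M_\alpha$ is still an automorphism in $M_\alpha$. That is exactly the triangularity verification above, so no step is a real obstacle here.
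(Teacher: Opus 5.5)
Your proposal is correct and follows the paper's proof. The argument runs through the same steps: a fixed point of $F$ in the complete $\CAT(0)$ space $X_3$ obtained via the circumcenter, conjugation into the standard chamber via \cref{l:fund_domain}, identification of the stabilizer as $M_\alpha \rtimes L_\alpha$ by \cref{p:stabilizer}, and then \cref{l:abstract_linearization}. Your explicit verification that $M_\alpha$ is stable under means simply spells out what the paper asserts in one line.
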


\begin{proof}
The group $F\subset \Tame(\A^3)$ acts by isometries on $X_3$, which is a complete $\CAT(0)$ space by \cref{t:X3_CAT0}.
The group $F$ admits (at least) one global fixed point $\nu\in X_3$, which can be constructed as the “circumcenter” of an arbitrary orbit.
By \cref{l:fund_domain}, conjugating by an element of $\Tame(\A^3)$ we can assume that $\nu = \nu_{\id,[\alpha]} \in \Ap_\id^+$.
By \cref{p:stabilizer}, $F\subset \Stab(\nu) = M_\alpha \rtimes L_\alpha$.
Since the group of triangular automorphisms $M_\alpha$ is stable by mean, we can therefore conclude by \cref{l:abstract_linearization}.
\end{proof}

We mention that there are nonlinearizable finite subgroups in $\Aut(\A^4)$.
For example, following \cite{FMJ}, if $\k$ contains three cube roots of unity, $1$, $\omega$ and $\omega^{-1}$, then the action on~$\A^4$ of the symmetric group $S_3 = \langle \sigma, \tau \mid \sigma^3 = \tau^2 = (\sigma \tau)^2 = 1\rangle$ defined as follows is nonlinearizable:
\begin{align*}
\sigma(a,b,x,y) &= (\omega a, \omega^{-1} b, x,y), \\\
\tau(a,b,x,y) &= (b,a, -b^3x + (1+ab+a^2b^2)y, (1-ab)x + a^3y ).
\end{align*}
One may suspect that $\tau$ is not a tame automorphism, but this falls under the open question of whether the inclusion $\Tame(\A^n) \subset \Aut(\A^n)$ is strict in dimension $n \ge 4$.
On the other hand, as $\tau$ after composition by the involution $(a,b,x,y) \mapsto (b,a,y,x)$ is identified with
an element of $\SL_2(\k[a,b])$, it is known thanks to \cite{Suslin} that this example becomes tame after extension to $\k^5$, by trivially extending the action on the additional variable.
Indeed, Suslin proves that for any $r \ge 3$, the group $\SL_r(\k[x_1, \dots, x_n])$, which can be thought of as a subgroup of $\Aut(\k^{r + n})$, is generated by elementary matrices, which are particular tame automorphisms.
Moreover, still following \cite{FMJ}, the previous example remains nonlinear in $\Aut(\A^{4+m})$, if we trivially extend the action of $S_3$ to any number $m$ of additional variables.
Combining with the result of Suslin, for any $n \ge 5$ we get a non-linear finite subgroup of $\Tame(\A^n)$;
in other words \cref{p:linearizable_A3}, and thus also \cref{t:X3_CAT0}, are no longer valid in dimension $n \ge 5$.
However, it would be interesting to study whether one of the two ingredients of the $\CAT(0)$ property persists, namely simple connectedness or local $\CAT(0)$ property.
Finally, the case of the dimension $n = $4 remains open, but seems difficult in the absence of a theory of reductions.

\subsection{Tits alternative}
\label{sec:tits_A3}

We start with two general lemmas:

\begin{lemma}
\label{l:tits_stable_extension}
\cite[Lemma 5.5]{Dinh}, \cite[Lemma 21.5]{Lamy_Book}
Let $G$ be a group and $N \lhd G$ a normal subgroup.
Then $G$ satisfies the Tits alternative if and only if the normal subgroup $N$ and the quotient group $G/N$ also do.
\end{lemma}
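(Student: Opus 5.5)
The direction of the equivalence that is used in practice is the \emph{if} direction, and that is where the work lies. The part of the \emph{only if} direction concerning $N$ is immediate, since every subgroup of $N$ is a subgroup of $G$. For the quotient I would be cautious. A free group $G$ satisfies the alternative, yet it can surject onto groups that do not. For instance, with $N$ the normal subgroup generated by all $m$-th powers, $G/N$ is a free Burnside group, which for suitable large $m$ is an infinite periodic group. Such a group contains no free subgroup, and it is not virtually solvable, since a solvable periodic finitely generated group is finite. So I would prove the statement for $N$ and for the \emph{if} direction, and flag that the remaining implication needs an extra hypothesis. Throughout, ``$G$ satisfies the Tits alternative'' means that every subgroup of $G$ is virtually solvable or contains a free group of rank $2$.

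For the \emph{if} direction, fix $H \subset G$ and let $\bar H = H/(H\cap N) \subset G/N$. If $\bar H$ contains a free group on $\bar a,\bar b$, choose lifts $a,b \in H$. Any nontrivial reduced relation between $a,b$ would project to a nontrivial relation between $\bar a,\bar b$, so $\lb a,b\rb$ is free of rank $2$. If instead $H\cap N$ contains a free group, we are also done. So assume both $\bar H$ and $H\cap N$ are virtually solvable. Let $H_1 \subset H$ be the preimage of a finite index solvable subgroup of $\bar H$. Then $H_1$ has finite index in $H$, the quotient $H_1/K$ is solvable, and $K := H_1\cap N$ is a normal, virtually solvable subgroup of $H_1$.

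The main obstacle is that a finite index solvable subgroup of $K$ need not be normal in $H_1$, so I would build an $H_1$-invariant one. Take $S_0 \lhd K$ solvable of finite index; for example, the normal core of a finite index solvable subgroup. For $h\in H_1$, each conjugate $hS_0h^{-1}$ is again normal and solvable in $K$, because $K \lhd H_1$. Let $T$ be the subgroup generated by all these conjugates. The image of $T$ in the finite group $K/S_0$ is reached after finitely many conjugates, so $T = S_0 \cdot h_1S_0h_1^{-1}\cdots h_kS_0h_k^{-1}$ is a finite product of normal solvable subgroups of $K$. Hence $T$ is solvable, and by construction it is normal in $H_1$ and of finite index in $K$.

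To finish, pass to $Q = H_1/T$, which contains the finite normal subgroup $F = K/T$ with $Q/F$ solvable. The centralizer $C$ of $F$ in $Q$ has finite index, being the kernel of $Q \to \Aut(F)$. Moreover $C\cap F$ is abelian and central in $C$, and $C/(C\cap F)$ embeds in $Q/F$, which is solvable, so $C$ is solvable. The preimage of $C$ in $H_1$ is then an extension of the solvable group $T$ by the solvable group $C$, hence solvable. It has finite index in $H_1$, and therefore in $H$, so $H$ is virtually solvable.
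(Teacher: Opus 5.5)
Your proposal is correct. The survey gives no argument of its own here, only the references to Dinh and to Lamy's book, so there is no in-paper proof to compare against.

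You are right that the equivalence as printed overclaims. The half of the \emph{only if} direction concerning $G/N$ is false, because every group is a quotient of a free group and free groups satisfy the alternative. Your free Burnside quotient of $F_2$ is a valid concrete instance: it is periodic, so it has no free subgroup, and it is infinite and finitely generated, so it is not virtually solvable.

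The lemma should be read as two claims: subgroups inherit the alternative, and $G$ satisfies it as soon as $N$ and $G/N$ do. This is all the survey actually uses. For $M_\alpha \rtimes L_\alpha$, the normal subgroup is solvable and the quotient is linear. For $B'$, the normal subgroup is solvable and the quotient is $\Aut(\A^2)$. For $C$, the normal subgroup embeds in the automorphism group of the plane over $\k(x_3)$ and the quotient is the affine group of the line.

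Your proof of the \emph{if} direction is complete and is the standard argument:
\begin{enumerate}
\item lift a free pair from $\bar H$ to $H$;
\item reduce to a finite index $H_1$ with $K = H \cap N$ virtually solvable and $H_1/K$ solvable;
\item build the $H_1$-invariant solvable subgroup $T$ of finite index in $K$, using the finiteness of $K/S_0$ to write $T$ as a finite product of normal solvable subgroups;
\item conclude with the centralizer of the finite normal subgroup $K/T$ in $H_1/T$.
\end{enumerate}
Step (3) is the point that write-ups often pass over quickly, namely that an extension of a virtually solvable group by a virtually solvable group is virtually solvable. You handle it correctly.
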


\begin{lemma}
\label{l:tits_uniform_countable}
\cite[Corollary 12.3]{LamyPrzytycki2026}
Let $n, m > 0$ and let $G$ be a group.
Suppose that each finitely generated subgroup of $G$ contains a nonabelian free group or a solvable subgroup of index $\le n$ and
derived length $\le m$.
Then $G$ satisfies the Tits alternative
\end{lemma}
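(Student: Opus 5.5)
Let $H \subset G$ be a subgroup containing no nonabelian free group. I will show that $H$ has a solvable subgroup of index at most $n$ and derived length at most $m$; in particular $H$ is virtually solvable, which is the Tits alternative. Every finitely generated subgroup of $H$ is a finitely generated subgroup of $G$ without free subgroups. So by hypothesis, for every finite subset $F \subset H$ the group $\lb F \rb$ contains a subgroup of index $\le n$ and derived length $\le m$. The task is to pass from these finitely generated pieces to $H$ itself. I plan a compactness argument in the space $\{0,1\}^H$ of all subsets of $H$, which is compact by Tychonoff's theorem.

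For each finite $F \subset H$, let $C_F \subset \{0,1\}^H$ be the set of subsets $L \subset H$ such that $L \cap \lb F \rb$ is a subgroup of $\lb F \rb$ of index $\le n$ and derived length $\le m$. I will rewrite each condition as a family of conditions, each involving only finitely many elements of $\lb F\rb$:
\begin{itemize}
\item $L \cap \lb F\rb$ contains $\id$ and is closed under products and inverses of its elements;
\item for any $h_0, \dots, h_n \in \lb F\rb$, some $h_i^{-1}h_j$ with $i\neq j$ lies in $L$ (this is exactly index $\le n$);
\item every $m$-fold iterated commutator of $2^m$ elements of $L \cap \lb F\rb$ equals $\id$ (this is exactly derived length $\le m$).
\end{itemize}
Each such condition defines a clopen subset of $\{0,1\}^H$. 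Hence $C_F$ is closed, as an intersection of clopen sets. It is nonempty by the hypothesis applied to $\lb F\rb$.

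Next I check the finite intersection property. If $F \subset F'$, then $C_{F'} \subset C_F$. Indeed, if $L' = L \cap \lb F'\rb$ has index $\le n$ in $\lb F'\rb$, then $L' \cap \lb F\rb$ has index $\le n$ in $\lb F\rb$. Being a subgroup of $L'$, it also has derived length $\le m$. Thus $C_{F_1} \cap \dots \cap C_{F_k} \supset C_{F_1 \cup \dots \cup F_k} \neq \emptyset$. By compactness there exists $L \in \bigcap_F C_F$.

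Finally, every finite set of elements of $H$ lies in some $\lb F\rb$. Therefore the conditions above hold for $L$ globally: $L$ is a subgroup of $H$, it has index $\le n$ (any $n+1$ elements have two in the same coset), and all $m$-fold iterated commutators in $L$ vanish, so $L^{(m)} = 1$. The only delicate point is the bookkeeping in the second step. One must verify that "index $\le n$" and "derived length $\le m$" really are finitary conditions, as the pigeonhole and commutator reformulations show. One must also work with $L \cap \lb F\rb$ rather than with $L$ itself, so that the sets $C_F$ are nested.
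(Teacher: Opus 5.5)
Your argument is correct and complete. The survey gives no proof of this lemma; it only cites \cite[Corollary 12.3]{LamyPrzytycki2026}, so there is no internal argument to match against. Your proof is the compactness theorem in Tychonoff form, and each step checks out:
\begin{itemize}
\item Each $C_F$ is closed, because it is an intersection of clopen conditions on finitely many coordinates.
\item Each $C_F$ is nonempty, by the hypothesis applied to $\langle F\rangle$, which has no free subgroup because $H$ has none.
\item The sets are nested, since $[\langle F\rangle : L'\cap\langle F\rangle] \le [\langle F'\rangle : L']$ and subgroups inherit the bound on derived length.
\item The final globalization is valid, because any finite configuration lies in some $\langle F\rangle$.
\end{itemize}

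The one fact you state without justification is that derived length $\le m$ is equivalent to the vanishing of all $m$-fold iterated commutators. This is standard. By induction, $K^{(k)}$ is generated by the set $S_k$ of $k$-fold iterated commutators of elements of $K$. That set is stable under inversion and under $K$-conjugation. For any set $S$, the commutator subgroup of $\langle S\rangle$ is the normal closure of $\{[s,t] : s,t\in S\}$, since the quotient by it is generated by commuting images. It deserves one line in a written proof.

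The natural alternative route treats countable groups first:
\begin{itemize}
\item Write $H$ as an increasing union of finitely generated subgroups $\Gamma_1\subset\Gamma_2\subset\cdots$.
\item Observe that each $\Gamma_i$ has only finitely many subgroups of index $\le n$, since they come from homomorphisms to symmetric groups $S_k$ with $k\le n$.
\item Apply König's lemma to the finitely branching tree of compatible choices, where a subgroup of $\Gamma_{i+1}$ lies over its restriction $K_{i+1}\cap\Gamma_i = K_i$.
\item Take the union along a branch.
\end{itemize}
That route avoids topology and the finitary rewriting of index. However, it is intrinsically limited to countable groups, and the uncountable case matters here because $\Tame(\A^3)$ is uncountable over an uncountable field. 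Your argument handles subgroups of arbitrary cardinality in one stroke. The price is that you must express ``index $\le n$'' and ``derived length $\le m$'' as conditions on finite tuples, which your pigeonhole and iterated-commutator reformulations do correctly.

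Both arguments yield more than the Tits alternative: $H$ itself has a solvable subgroup of index $\le n$ and derived length $\le m$.
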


By \cref{p:stabilizer}, the stabilizer of a point $\nu_{f,\alpha} \in X_3$ is the semidirect product of a linear group $L_\alpha$ and a solvable group $M_\alpha$.
So by \cref{l:tits_stable_extension}, such a stabilizer satisfies the Tits alternative.
Similarly, we can establish the Tits alternative for the following two subgroups $B', C \subset \Tame(\A^3)$, which correspond to stabilizers of particular endpoints in $\bord X_3$.

The group $B'$ consists of automorphisms of the form
\[
ax_1 + P(x_2, x_3), f(x_2,x_3), g(_2,x_3) ),
\]
where $(f,g)$ is an automorphism of $\A^2$ (with variables $x_2, x_3$). Since we know from \cref{p:tits_A2} that $\Aut(\A^2)$ satisfies the Tits alternative, we obtain that $B'$ also does.

The group $C$ consists of automorphisms of the form
\[
(f(x_1, x_2, x_3), g(x_1,x_2,x_3), ax_3 + b).
\]
Here we can think of $(f,g)$ as an automorphism of $\A^2$ with variables $x_1, x_2$, and coefficients in the field of fractions of $\k[x_3]$. So again by the same argument as above we obtain that $C$ satisfies the Tits alternative.

Now we review the proof of the Tits alternative for $\Tame(\A^3)$, see \cite{LamyPrzytycki2026}.
Every element of $\Tame(\A^3)$ is elliptic, parabolic or loxodromic, according to its action on the $\CAT(0)$ space $X_3$.
Let $G \subset \Tame(\A^3)$ be a subgroup.

If $G$ contains two loxodromic elements with disjoint pairs $(\xi, \eta)$, $(\xi', \eta')$ of fixed endpoints, and satisfying the technical condition that each pair consists of far limit points, then by a standard ping-pong argument we can produce a free group over two generators in $G$.

If all elements in $G$ are elliptic, then any finitely generated subgroups $H \subset G$ admits a global fixed point, and $H$ is conjugate to a subgroup of $M_\alpha \rtimes L_\alpha$ for some $\alpha$. We are then in position to apply \cref{l:tits_uniform_countable}.

If $G$ stabilizes a principal or antiprincipal endpoint, then $G$ is conjugate to a subgroup of $B'$ or $C$ and so satisfies the Tits alternative.

The end of the proof is to show that $G$ must fall under one of the cases treated above.

\subsection{Dynamical degrees}
\label{sec:dyn_degree}

Let $\alpha = (\alpha_1, \dots, \alpha_n) \in \Pi^+$, and $f \in \End(\A^n)$.
We define the $\alpha$-degree of $f$ as
\[
\deg_{\alpha}(f) = \max \left\{ \frac{-\nu_{\id,\alpha(f_i)}}{\alpha_i} \mid i = 1, \dots, n \right\}.
\]

Given a polynomial $P \in \k[x_1,\dots, x_n]$, we can write $P = \sum_{\theta} P_\theta$, where each $P_\theta$ is a sum of monomials $m_i$ such that $-\nu_{\id,\alpha}(m_i) = \theta$.
We say that $P_\theta$ is the $\alpha$-homogeneous part of degree $\theta$ of $P$.
Similarly, given the endomorphism $f \in \End(\A^n)$, we define the $\alpha$-homogeneous part of degree $\theta$ as $f_\theta = (g_1, \dots, g_n)$, where $g_i$ is the $\alpha$-homogeneous part of degree $\theta \cdot \alpha_i$ of $f_i$.
In particular, if $\theta = \deg_{\alpha}(f)$, we say that the $\theta$-homogeneous part is the $\alpha$-leading part of $f$.
We say that $f$ is $\alpha$-algebraically stable if $\deg_\alpha(f^k) = \deg_\alpha(f)^k$ for all $k \ge 1$

\begin{proposition}
\label{p:muAS}
\cite[Proposition A]{BlancvanSanten}
Let $f \in \End(\A^n)$ be an endomorphism of dynamical degree $\lambda(f)$, $\alpha = (\alpha_1, \dots, \alpha_n) \in (\R_{>0})^n$ a weight, and $g \in \End(\A^n)$ the $\alpha$-leading part of $f$.
Then the following are equivalent:
\begin{enumerate}
\item $\deg_{\alpha}(f) = \lambda(f)$;
\item $f$ is $\alpha$-algebraically stable;
\item $g^k \neq 0$ for each $k \ge 1$.
\end{enumerate}
\end{proposition}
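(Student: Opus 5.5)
The plan is to prove the three equivalences through a multiplicativity property of leading parts, writing $d = \deg_\alpha(f)$ throughout. The key observation concerns the $\alpha$-leading part $g$ of $f$: each component $g_i$ is $\alpha$-homogeneous of degree $d\alpha_i$. Hence $g$ maps $\alpha$-homogeneous polynomials of degree $\theta$ to $\alpha$-homogeneous polynomials of degree $d\theta$, or to $0$.

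\textbf{Step 1 (composition formula).} For endomorphisms $f,h$ with $\alpha$-degrees $d,e$ and leading parts $g,k$, we have $\deg_\alpha(h\circ f)\le de$. Moreover, the $de$-homogeneous part of $h\circ f$ is exactly $k\circ g$. To see this, expand $h_i(f_1,\dots,f_n)$. A monomial of $h_i$ has weighted degree at most $e\alpha_i$. After substituting the $f_j$, the part of top weighted degree $de\alpha_i$ can only come from the monomials of $h_i$ of weighted degree $e\alpha_i$, evaluated on the leading parts $g_j$. By induction we get
\[
\deg_\alpha(f^m)\le d^m,
\]
and the $d^m$-homogeneous part of $f^m$ equals $g^m$. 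Consequently $\deg_\alpha(f^m)=d^m$ if and only if $g^m\neq 0$. This already gives (2)$\Leftrightarrow$(3).

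\textbf{Step 2 (comparison with the ordinary degree).} Set $c=\min_i\alpha_i$ and $C=\max_i\alpha_i$. For any polynomial $P$, the weighted degree $-\nu_{\id,\alpha}(P)$ lies between $c\deg P$ and $C\deg P$. Therefore $\deg_\alpha(h)$ and $\deg(h)$ are comparable up to multiplicative constants independent of $h$. Taking $m$-th roots and letting $m\to\infty$, we obtain
\[
\lambda(f)=\lim_m \deg_\alpha(f^m)^{1/m}.
\]
Since $\deg_\alpha(f^m)\le d^m$ by Step 1, this yields $\lambda(f)\le d$ always. In particular (2) implies (1).

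\textbf{Step 3 ((1)$\Rightarrow$(3), the main obstacle).} Suppose $g^k=0$ for some $k$. I must show that $\lambda(f)<d$. By Step 1, the $d^k$-part of $f^k$ vanishes. The weighted degrees occurring in the components of $f^k$, normalized by $\alpha_i$, belong to a finite set of values determined by the supports. So $\deg_\alpha(f^k)<d^k$, strictly. Using submultiplicativity from Step 1, $\deg_\alpha(f^{km})\le \deg_\alpha(f^k)^m$. Combined with the limit formula of Step 2, this gives
\[
\lambda(f)\le \deg_\alpha(f^k)^{1/k}<d,
\]
contradicting (1).

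The subtle point is this strictness, and care is needed since the weights $\alpha_i$ may be irrational. Even so, each component is a finite polynomial, so the maximum over monomials is attained, and the inequality is strict as soon as the top part vanishes. One must also check that the constant in the degree comparison does not spoil the $k$-th root limit. It does not, because constants disappear after taking $m$-th roots. With Steps 1–3, the three conditions are equivalent.
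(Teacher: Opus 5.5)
Your proof is correct. The survey gives no argument of its own and only cites \cite[Proposition A]{BlancvanSanten}, and your route is essentially the standard proof from that source: submultiplicativity of $\deg_\alpha$, the fact that the leading part of an iterate is the iterate of the leading part, and $\lambda(f)=\lim_k \deg_\alpha(f^k)^{1/k}$ from the comparability of $\deg_\alpha$ and $\deg$. By Fekete this limit also equals $\inf_k \deg_\alpha(f^k)^{1/k}$, which gives Step~3 and (1)$\Rightarrow$(2) in one line.

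Two points should be tightened. Step~1 should be stated with upper bounds $\deg_\alpha(f)\le d$, $\deg_\alpha(h)\le e$ and the (possibly zero) $d$- and $e$-homogeneous parts, since in the induction $\deg_\alpha(f^{m-1})$ may be strictly less than $d^{m-1}$. The claim that lower-order terms drop strictly also uses $d>0$; the remaining case $d=0$, where $f$ is constant, is trivial.
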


\begin{example}
\cite[Lemma 4.3.5]{BlancvanSanten}
Let $a,b,c \ge 1$, and consider
\[
f = (x_3 + x_1^ax_2^b, x_2 + x_1^c, x_1) \in \Tame(\A^3).
\]
Given $\alpha = (\alpha_1, \alpha_2, \alpha_3)$,
\[
\deg_\alpha(f) = \max \left\{ \frac{\alpha_3}{\alpha_1}, \frac{a\alpha_1 + b\alpha_2}{\alpha_1}, \frac{\alpha_2}{\alpha_2}, \frac{c\alpha_1}{\alpha_2}, \frac{\alpha_1}{\alpha_3}\right\}.
\]
Since the formula is invariant under scaling, we can assume $\alpha_1 = 1$.
Then let $\alpha_2$ be the unique positive solution of the equation
\[
a + b \alpha_2 = c \frac{1}{\alpha_2}, \text{ or equivalently } b \alpha^2_2 + a \alpha_2 - c = 0.
\]
So
\[
\alpha_2 = \frac{-a + \sqrt{a^2+4bc}}{2b}.
\]
Let $\lambda = a + b\alpha_2 = a + \frac12 (-a + \sqrt{a^2+4bc}) = \frac12 (a + \sqrt{a^2+4bc}) $, and pick $\alpha_3$ such that $\frac{1}{\lambda} < \alpha_3 < \lambda$.
Then by construction $\deg_\alpha(f) = \lambda$, and the $\alpha$-leading part of $f$ is $g = (x_1^ax_2^b, x_1^c, 0)$.
Since $g$ fixes $(1,1,0)$, no power of $g$ is the zero endomorphism, and we conclude by \cref{p:muAS} that $f$ is $\alpha$-algebraically stable, with dynamical degree equal to $\lambda(f) = \frac12 (a + \sqrt{a^2+4bc}) $.
\end{example}

One can similarly obtain some interesting examples of dynamical degrees for the group $\Tame_q(\A^4)$, which we considered in \parref{sec:autqA4}.

\begin{example}
\label{e:meunier}
\cite{Meunier}
Let
\[
f = \mat{x_3 & x_1 + x_3^2 \\ -x_4 & -x_2-x_3x_4 } \in \Tame_q(\A^4).
\]
Given $\alpha = (\alpha_1, \alpha_2, \alpha_3, \alpha_4)$,
\[
\deg_\alpha(f) = \max \left\{ \frac{\alpha_3}{\alpha_1}, \frac{\alpha_1}{\alpha_2}, \frac{2\alpha_3}{\alpha_2} , \frac{\alpha_4}{\alpha_3}, \frac{\alpha_2}{\alpha_4}, 1 + \frac{\alpha_3}{\alpha_4}\right\}.
\]
% In particular, if $\alpha = (\alpha_1, \alpha_2, 1-\alpha_2, 1-\alpha_1)$,
% \[
% \deg_\alpha(f) = \max \left\{ \frac{1-\alpha_2}{\alpha_1}, \frac{\alpha_1}{\alpha_2}, \frac{2-2\alpha_2}{\alpha_2} , \frac{1-\alpha_1}{1-\alpha_2}, \frac{\alpha_2}{1-\alpha_1}, 1 + \frac{1-\alpha_2}{1-\alpha_1}\right\}.
% \]
Let $c = \frac{\sqrt 5-1}{2}$, solution of $\frac1c = 1+c$, and let $\alpha = (1-\alpha_4, 1-c \alpha_4, c \alpha_4, \alpha_4)$. Then
\begin{align*}
\deg_\alpha(f) &= \max \left\{ \frac{c\alpha_4}{1-\alpha_4}, \frac{1-\alpha_4}{1-c\alpha_4}, \frac{2c\alpha_4}{1-c \alpha_4} , \frac{\alpha_4}{c\alpha_4}, \frac{1-c\alpha_4}{\alpha_4}, 1 + \frac{c\alpha_4}{\alpha_4}\right\} \\
&=
 \max \left\{ \frac{c\alpha_4}{1-\alpha_4}, \frac{1-\alpha_4}{1-c\alpha_4}, \frac{2c\alpha_4}{1-c \alpha_4} , \frac{1-c\alpha_4}{\alpha_4}, 1 + c \right\}.
\end{align*}
For $\frac{1}{5\sqrt5} < \alpha_4 < \frac{5+\sqrt 5}{10}$, one checks that $\deg_\alpha(f) = 1+ c$, and the $\alpha$-leading part of $f$ is $g = (0,0,-x_4, -x_3x_4)$.
We conclude that $f$ is $\alpha$-algebraically stable, with dynamical degree $\lambda(f) = \frac{1+\sqrt5}{2} \approx 1,618$.
\end{example}

\begin{remark}
In \cite{Meunier}, Meunier shows that for any $f \in \Tame_q(\A^4)$ with $\deg(f) = 2$, we have $\lambda(f) \in \{1, 2, \frac{1+\sqrt5}{2}\}$.
So $\frac{1+\sqrt5}{2}$ instead of $\frac 43$ seems to be a good candidate to be the optimal lower bound in \cref{p:gap_dang}.
\end{remark}

\bibliographystyle{myalpha2024}
\bibliography{biblio}

\end{document}